\definecolor{mydarkblue}{rgb}{0,0.08,0.45} 
\definecolor{mydarkgreen}{rgb}{0,0.45,0.08} 
\newcommand{\EE}{\mathbb{E}} 
\newcommand{\gap}{\textnormal{Gap}}
\newcommand{\distt}{\textnormal{dist}} 
\newcommand{\interior}{\textnormal{int}}
\DeclarePairedDelimiterX{\inp}[2]{\langle}{\rangle}{#1, #2}
\DeclarePairedDelimiter{\En}{\lVert}{\rVert} 
\title{Convergence of SVRG-Extragradient for Variational Inequalities: 
Error Bounds and Increasing Iterate Averaging}
\author{%
  Tianlong Nan, Yuan Gao, Christian Kroer
  \\
  Department of Industrial Engineering and Operations Research\\
  Columbia University\\
  New York, NY 10027 \\
  \texttt{\{tn2447,yg2541,ck2945\}@columbia.edu} \\
}
\begin{document}

\graphicspath{ {./plots/} }

\maketitle

\begin{abstract}
We study the last-iterate convergence of variance reduction methods for extragradient (EG) algorithms for a class of variational inequalities satisfying error-bound conditions. 
Previously, last-iterate linear convergence was only known under strong monotonicity. 
We show that EG algorithms with SVRG-style variance reduction, denoted SVRG-EG, attain last-iterate linear convergence under a general error-bound condition much weaker than strong monotonicity. This condition captures a broad class of non-strongly monotone problems, such as bilinear saddle-point problems commonly encountered in two-player zero-sum Nash equilibrium computation.
Next, we establish linear last-iterate convergence of SVRG-EG with an improved guarantee under the weak sharpness assumption.
Furthermore, motivated by the empirical efficiency of increasing iterate averaging techniques in solving saddle-point problems,
we also establish new convergence results for SVRG-EG with such techniques. 
\end{abstract}

\doparttoc 
\faketableofcontents 

\section{INTRODUCTION}


We study the variational inequality (VI) problem: 
find $z^* \in V$ such that 
\begin{equation}
    \inp{ F(z^*)}{ z - z^* } \ge 0, \; \forall\; z \in \cZ, 
    \tag{VI}
    \label{prob:vi}
\end{equation}
where 
$V$ is some Euclidean space, $F: V \rightarrow V$ is a monotone operator, and $\cZ$ is a compact convex set in $V$. 
A prominent 
use case 
of VIs is 
in addressing 
convex-concave saddle-point problems (SPPs):
\begin{equation}
    \min_{x \in \mathcal{X}} \max_{y \in \mathcal{Y}} f(x, y),  \tag{SP}
    \label{prob:sp}
\end{equation}
where $\mathcal{X}$ and $\mathcal{Y}$ are compact convex sets, 
and $f$ is a differentiable function with Lipschitz gradients. 
To transform \eqref{prob:sp} into a VI, 
we let $z$ be the concatenation of $x$ and $y$, construct the monotone operator $F(z) = (\nabla f_x(x, y), - \nabla f_y(x, y))$, and 
$\cZ = \mathcal{X} \times \mathcal{Y}$. 
Then, the set of optimal solutions $\cZ^*$ corresponds to the set of saddle-points of \eqref{prob:sp}. 

In this paper, we are particularly motivated by the SPPs that arise from two-player zero-sum games. 
In such games, the set of Nash equilibria is the set of solutions to an SPP, 
where $\cX,\cY$ represent the sets of possible mixed strategies for the respective players, 
and $f$ encodes the expected value attained by the second player given a strategy pair $(x,y)$.
Notably, for both \emph{normal-form games} (i.e. matrix games) and \emph{extensive-form games} (EFGs), 
$\cX,\cY$ are compact convex polyhedral sets, and $f$ is a biaffine function. 
This configuration results in a VI characterized by
an affine operator $F$ and 
a compact convex polyhedral set $\cZ$. 

In many real-world applications of VIs, 
the operator $F$ often exhibits a finite-sum structure, 
denoted as 
$F = F_1 + \cdots + F_N$.
Originally, the finite-sum structure was motivated by machine-learning problems, 
where it reflects 
the empirical risk across $N$ data points. 
Nonetheless, it can also arise from the decomposition of the payoff matrix in a normal-form game, 
or from the inherent structure of the payoff matrix based on expected values—as seen when cards are randomly dealt in the EFG representation of a poker game~\citep{lanctot2009monte}. 
Stochastic first-order methods (FOMs) can leverage finite-sum structures to achieve convergence rates comparable to those of deterministic methods, while at 
a much lower cost per iteration. 
One such approach is known as stochastic variance reduced gradient (SVRG), which leverages the finite-sum structure when doing variance reduction.

We consider the SVRG extragradient method (SVRG-EG) for VIs, as proposed 
by \citet{alacaoglu2022stochastic}. 
In particular, our focus is primarily on 
the \emph{loopless} version of SVRG-EG. 
For this algorithm, \citet{alacaoglu2022stochastic} showed that for a monotone VI, the uniform average of the iterates converges to a solution at a rate of $O(1/T)$, where $T$ is the number of iterations. 
Moreover, they showed that, when $\cZ$ is a strongly convex set (or $F$ is strongly monotone, 
though they omitted the proof), 
the loopless SVRG-EG achieves last-iterate linear convergence. 

We study convergence properties of SVRG-EG under more general assumptions. 
Firstly, we consider a classical error-bound condition (or simply error bounds). 
This condition generalizes the strong monotonicity assumption and captures a range of practical problems of interest where strong monotonicity does not hold, 
such as two-player zero-sum games and 
image segmentation~\citep{chambolle2011first}. 
We show that both the loopless SVRG-EG and its \emph{double-loop} variant achieve linear convergence in their last iterates under the error-bound condition. 
To the best of our knowledge, our result is the first constant-stepsize stochastic FOM that solves two-player zero-sum games at a linear rate. 
Furthermore, we consider a class of VIs whose solution sets are subject to a \emph{weak sharpness} condition. 
Under this condition, we show that SVRG-EG achieves a last-iterate linear convergence rate which parallels that achieved under the strong monotonicity of $F$. 

In addition to evaluating last iterates, we study \emph{increasing iterate averaging schemes} (IIAS) for 
SVRG-EG. 
Typically, most first-order methods for convex, but not strongly convex, problems are shown to have 
an \emph{ergodic} convergence rate, meaning that the uniform average of the iterates attains some guarantee of convergence, whereas the individual iterates may not have such a guarantee.
However, the uniform averaging of iterates is very conservative: 
in practice, the early iterates are of very poor quality, but there is rapid improvement in the iterates. 
By averaging uniformly, this improvement is only slowly integrated to the running average of iterates. 
It was shown by \citet{gao2021increasing} that IIAS 
significantly bolsters 
the practical performance in various FOMs, including the mirror prox method~\citep{nemirovski2004prox}, and the primal-dual method of \citet{chambolle2011first} and its variants~\citep{malitsky2018first,chambolle2016ergodic}, while retaining the same theoretical guarantee of convergence. 
They pinpointed two particularly effective strategies: \emph{linear averaging}, where the weight of iterate $t$ is proportional to $t$, and \emph{quadratic averaging}, where the weight $t$ is proportional to $t^2$. 
We show that IIAS 
maintains the same $O(1/T)$ rate of convergence as that of the ergodic scheme presented by \citet{alacaoglu2022stochastic} for SVRG-EG. 

Finally, we study the numerical performance of SVRG-EG algorithms, under uniform averaging, linear averaging, 
and last iterate. 
We contrast that with state-of-the-art methods for solving SPPs, also instantiated with each of these averaging schemes. 
First, we demonstrate that the last iterates of SVRG-EG achieve fast convergence for both matrix games and EFGs. 
Second, we find that it is important to take into account IIAS: the deterministic and SVRG-EG methods have orders-of-magnitude performance differences depending on the averaging scheme. 
On matrix games we find that SVRG-EG methods with IIAS, or last-iterate, perform substantially better than other methods. On EFGs we find that SVRG-EG performs better than EG in one game and on par in another.
For image segmentation, SVRG-EG with linear and last-iterate again perform substantially better than other methods. 
\section{RELATED WORK}
\label{sec:related-work}

For VIs and SPPs, EG-type methods are renowned for their strong convergence guarantees in the deterministic full-gradient setting. Convergence guarantees have been established for uniform iterate averaging (e.g.,~\citet{nemirovski2004prox}) and the last iterate (e.g.,~\citet{tseng1995linear}). 
There is also extensive literature on variations of EG that avoid taking two steps or avoid two gradient computations per iteration, and so on (e.g.~\citet{hsieh2019convergence,malitsky2020forward}). 
That said, all of these full-gradient methods require one or more full gradient computations per iteration. 

The rising significance of stochastic methods in large-scale computations has driven interest in stochastic EG methods. 
\citet{juditsky2011solving} studied stochastic Mirror-Prox (which generalizes EG) methods for VIs with compact convex feasible sets, and \citet{mishchenko2020revisiting} improved the algorithm by using a single sample per iteration. 
\citet{hsieh2019convergence} showed stochastic variants of the single-call type EG methods. 
Yet, with a constant stepsize, the iterates of these methods only converge to a neighborhood of the solution set. 
Diminishing stepsizes do ensure convergence, but they empirically slow down the performance. 


Variance reduction is a powerful mechanism to enhance the convergence rates of stochastic FOMs. A detailed discussion of SVRG and other variance reduction techniques in the context of convex minimization can be found in Appendix~\ref{app:related-work}. 
For solving saddle-point problems, SVRG-style variance reduction was first studied under the strongly-convex-concave setting by \citet{palaniappan2016stochastic}.
Since then, there has been extensive work on similar variance reduction techniques for the more general strongly monotone VIs \citep{beznosikov2022sarah,alkousa2020accelerated,jin2022sharper}. 
However, these works do not easily extend to the setting we focus on, that is, non-strongly convex-concave---especially bilinear---saddle-point problems, a special form of non-strongly monotone VIs. 
It is worth noting, though, that one could apply Nesterov smoothing in order to reduce the monotone setting to the strongly monotone setting, at a small cost in theory~\citep{nesterov2005smooth}. 
An adaptive variant of that approach is given by \citet{jin2022sharper}. 
However, here we are interested in methods such as EG, which avoid smoothing. 

\citet{carmon2019variance} gave, to our knowledge, the first 
theoretical results 
for variance reduction in matrix games, a special case of monotone VIs.
\citet{huang2022accelerated} considered monotone and strongly monotone VIs, and also developed SVRG-style methods. 
\citet{han2021lower} gave lower bounds for solving finite-sum saddle-point problems via FOMs. Note that their lower bounds do not contradict our linear-rate results: their analysis relies on non-polyhedral feasible sets such as $\ell_2$-norm balls. 
None of the works mentioned in this paragraph consider the error-bound condition for VIs and SPPs. 
Also, they do not incorporate non-uniform iterate averaging schemes. 


\citet{tseng1995linear} established a paradigm for deriving linear convergence for iterative methods in VIs, under the projection-type error-bound condition. 
Since then, error bounds have been commonly used in the analysis of FOMs. 
For more about this, see~\citet{drusvyatskiy2018error} and references therein. 
Recently, error bounds (or equivalent conditions) have also been considered to show linear convergence of SVRG methods without strong convexity, in the convex minimization setting, e.g., by \citet{karimi2016linear,xu2017adaptive,zhang2022linear}. 

Proposed by~\citet{polyak1979sharp}, 
the concept of a sharp solution was generalized by~\citet{burke1993weak} to weak sharpness to include the possibility of multiple solutions. 
\citet{patriksson1993unified} generalized weak sharpness to variational inequalities. 
While weak sharpness is often linked to identifying finite convergence~\citep{marcotte1998weak}, 
we aim to leverage this assumption to determine a specific rate at which iterates approach the solution set. 
In recent work, \citet{applegate2023faster} used sharpness to show linear convergence of first-order primal-dual methods for minimax problems. Note that, their assumption is not generally equivalent to ours, and they analyzed a different, and deterministic, algorithm.

\section{PRELIMINARIES}

Let $V$ be a finite dimensional vector space and $S$ be a nonempty closed convex set in $V$. 
Let $\inp{\cdot}{\cdot}$ denote the Euclidean inner product and $\En{\cdot}$ denote the Euclidean norm. 
Denote the Euclidean projection of $z$ onto set $S$ as $\Pi_S(z) = \arg\min_{z' \in S} \En{ z' - z }$. 
For a matrix $\bA$, we denote its $i$'th row as $\bA_i$ and $j$'th column as $\bA_{\cdot j}$. 
$\bA^\top$ is the transpose of $\bA$. 
$\En{\bA}_F$ is the Frobenius norm and $\En{\bA}_2$ is the operator norm of $\bA$ (induced by $\En{\cdot}_2$). 
$\EE_\xi$ represents the expectation with respect to a random variable $\xi$. 
$\cF = \sigma(\cdot)$ denotes $\sigma$-field generated by a set of random variables. 

We assume \eqref{prob:vi} satisfies the following assumptions: 

\begin{assumption} 
    The solution set $\mathcal{Z}^*$ is nonempty. 
    \label{asp:1}
\end{assumption} 
\begin{assumption} 
    The subset $\cZ \subset V$ is a compact convex set. 
    \label{asp:2}
\end{assumption} 
\begin{assumption} 
    The operator $F$ is monotone, i.e., $\inp{F(z_1) - F(z_2)}{z_1-z_2} \ge 0,\; \forall z_1, z_2 \in \cZ$. 
    \label{asp:3}
\end{assumption}
\begin{assumption} 
The operator $F$ has a stochastic oracle $F_{\xi}$ that is unbiased, i.e., $\mathbb{E}_\xi[F_{\xi}(z)] = F(z),\; \forall z \in \cZ$ and $L_f$-Lipschitz in mean, i.e., 
$\bbE_\xi[ \En{ F_\xi(z_1) - F_\xi(z_2) }^2 ] \leq L^2 \En{ z_1 - z_2 }^2,\; \forall z_1, z_2 \in \cZ$. 
    \label{asp:4}
\end{assumption}



While our results are somewhat more general, as in \citet{alacaoglu2022stochastic}, the primary application is the finite-sum setting where $F = F_1+\cdots+F_N$, in which case a natural stochastic oracle is one that samples an index $\xi \in [N]$, where $[N]=\{1,\ldots,N\}$. 
Throughout the paper, when we refer to $N$ it should be thought of as the number of terms in the finite-sum structure, or more generally the time to compute a full gradient relative to the computation of a gradient for a sample $\xi$.


\section{ALGORITHM}

This paper considers extragradient algorithms with variance reduction, first proposed by \citet{alacaoglu2022stochastic}. We focus primarily on their \emph{loopless} variant.
Most results can be extended to their double-loop variant as well, but we defer the discussion on this to the appendix.
The basic idea in (loopless) SVRG-EG is to maintain some intermittently-updated snapshot point $w_k$,
which is updated stochastically with low probability.
A full gradient is computed for $w_k$, and then the stochastic gradient estimates at each iteration use the gradient at the snapshot point combined with the gradient at the sampled index $\xi_k$ to perform variance reduction.
This keeps the cost of most iterations cheap, while the variance of the gradient estimates reduces to $0$ as $k$ increases.
This ensures that SVRG-EG converges with a constant stepsize. 
Beyond that, we can set adaptive (greater than $\tau_0$) stepsizes to exploit the feature of reduced variance to improve the convergence rate.  

\begin{algorithm}[tb]
  \KwIn{$p \in (0, 1]$, probability distribution $\cD$, step size $\tau$, $\alpha \in (0, 1)$, $z_0 = w_0$}
  \For{$k = 0, 1, \cdots$}{
    $\bar{z}_k = \alpha z_k + (1 - \alpha) w_k$\; 
    $z_{k + 1/2} = \Pi_\cZ(\bar{z}_k - \tau F(w_k))$\; 
    Draw an index $\xi_k$ according to $\cD$\; 
    $\hat{F}(z_{k+1/2}) = F_{\xi_k}(z_{k+1/2}) - F_{\xi_k}(w_k) + F(w_k)$\; 
    $z_{k+1} = \Pi_\cZ( \bar{z}_k - \tau \hat{F}(z_{k+1/2}) )$\; 
    $w_{k+1} = \begin{cases}
      z_{k+1} & \text{with probability } p \\ 
      w_k & \text{otherwise}.  
    \end{cases}$
  }
  \caption{SVRG-Extragradient (SVRG-EG)}
  \label{alg:eg-svrg}
\end{algorithm}

We state
the loopless SVRG-EG algorithm \citep[Algorithm 1]{alacaoglu2022stochastic} in \cref{alg:eg-svrg}. 
We have several important parameters: the probability $p$ to update $w_k$ at each iteration,
the probability distribution $Q$ on the indices $\xi$ which defines the stochastic oracle, a step size $\tau_0$, and the convex combination parameter $\alpha$ which specifies the extent to which the centering point $\bar z_k$ 
moves towards the snapshot point $w_k$.
In practice, we run these algorithms with the suggested parameters from \citet{alacaoglu2022stochastic}. 
That is, we use $p = \frac{2}{N}$, $\alpha=1 - \frac{2}{N}$ and $\tau = \frac{0.99\sqrt{2}}{\sqrt{N}L}$. 
\section{LINEAR CONVERGENCE UNDER ERROR BOUNDS}
\label{sec:error bounds}

There is a variety of possible error-bound conditions in the literature. 
At a high level, they usually state that the distance to the solution set from a feasible point near the solution set can be bounded by some simple function of the feasible point, often the norm of an ``iterative residual'' term at that point. 
They are useful in establishing linear convergence guarantees for first-order methods, since they capture the distance to optimality using a distance-like measure for consecutive iterates. 

We define the \emph{proximal-gradient mapping} $\mathcal{G}_\tau(z) = \tau^{-1} \left( z - \Pi_\mathcal{Z}\left( z - \tau F(z) \right) \right)$. 
Using $\mathcal{G}_\tau(z)$ as the residual function, we assume a projection-type error-bound condition \citep[Equation (5)]{tseng1995linear}: 

\begin{assumption}[Error-Bound Condition] 
The operator $F$ satisfies: 
given some $\tau \in (0, +\infty)$, we have 
\begin{equation*}
    \textnormal{dist}(z, \mathcal{Z}^*) \le C_0 \En{\mathcal{G}_\tau(z)}, \ \forall\; z \in \mathcal{Z} \text{ s.t. }\En{\mathcal{G}_\tau(z)} \le \epsilon_0, 
    \label{eq:error-bound-assumption}
\end{equation*}
for some $C_0, \epsilon_0 > 0$, where we let 
$\textnormal{dist}(z, \mathcal{Z}^*) = \min_{z^* \in \mathcal{Z}^*} \En{z - z^*}.$ 
\label{asp:ebc}
\end{assumption} 

Note that, $C_0, \epsilon_0$ are $\tau$-invariant in this assumption. 

In convex optimization and VI literature, the error-bound condition is often stated as a local property within a neighborhood around the optimal set.
However, for problems with a bounded feasible set, we can easily extend it to the entire feasible set, as stated in the following lemma.
\begin{lemma} 
    If Assumption \textnormal{Error-Bound Condition} holds, 
    then for any $z \in \cZ$ and $\tau > 0$, we have 
    \begin{equation}
        \En{ z - \Pi_{\mathcal{Z}^*}(z) } \le C \En{ z - \Pi_\mathcal{Z}(z - \tau F(z)) }, 
        \label{eq:eb}
    \end{equation}
    where $\bar{C} = \max\left\{C_0, {D(\mathcal{Z})}/{\epsilon_0} \right\}$ and $C = {\bar{C}}/{\tau}$. Here, 
    $D(\mathcal{Z})$ is the maximal diameter of the compact set $\mathcal{Z}$. 
    \label{lemma:eb-1}
\end{lemma}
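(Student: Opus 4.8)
The plan is to establish \eqref{eq:eb} by a short dichotomy argument that plays the Error-Bound Condition (\cref{asp:ebc}) against the boundedness of $\mathcal{Z}$ (\cref{asp:2}). The starting observation is the identity $\En{\mathcal{G}_\tau(z)} = \tau^{-1}\En{z - \Pi_\mathcal{Z}(z - \tau F(z))}$, immediate from the definition of the proximal-gradient mapping, so that writing $r := \En{z - \Pi_\mathcal{Z}(z - \tau F(z))}$ gives $\En{\mathcal{G}_\tau(z)} = r/\tau$. I would also record that $\En{z - \Pi_{\mathcal{Z}^*}(z)} = \distt(z,\mathcal{Z}^*)$ by definition of the Euclidean projection onto $\mathcal{Z}^*$, which is well defined because $\mathcal{Z}^*$ is nonempty (\cref{asp:1}), closed and convex (a standard consequence of monotonicity and continuity of $F$ via Minty's lemma), and bounded as a subset of the compact set $\mathcal{Z}$.

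First I would handle the case $\En{\mathcal{G}_\tau(z)} \le \epsilon_0$, equivalently $r \le \tau\epsilon_0$: here \cref{asp:ebc} applies directly and gives $\distt(z,\mathcal{Z}^*) \le C_0 \En{\mathcal{G}_\tau(z)} = C_0 r/\tau \le \bar{C} r/\tau = Cr$, since $C_0 \le \bar{C}$ by definition of $\bar{C}$. Then I would handle the complementary case $\En{\mathcal{G}_\tau(z)} > \epsilon_0$, equivalently $r > \tau\epsilon_0$: now the error bound is unavailable, but since both $z$ and $\Pi_{\mathcal{Z}^*}(z)$ lie in $\mathcal{Z}$ we have $\distt(z,\mathcal{Z}^*) \le D(\mathcal{Z}) = (D(\mathcal{Z})/\epsilon_0)\,\epsilon_0 < (D(\mathcal{Z})/\epsilon_0)(r/\tau) \le \bar{C} r/\tau = Cr$. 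In both cases $\En{z - \Pi_{\mathcal{Z}^*}(z)} = \distt(z,\mathcal{Z}^*) \le Cr$, which is exactly \eqref{eq:eb}, so combining them finishes the proof.

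I do not expect a genuine obstacle here: the argument is elementary once the natural-residual identity is in place, and the split point $\epsilon_0$ is precisely what lets the diameter bound take over where the local error bound stops. The one point deserving attention is the role of $\tau$: \cref{asp:ebc} is stated for a fixed $\tau$ but with $\tau$-invariant constants $C_0,\epsilon_0$, so it may legitimately be invoked at whatever $\tau$ appears in \eqref{eq:eb}, which is what the above does. If instead one only assumed the error bound at a single step size $\tau_0 \neq \tau$, the gap could still be closed via the classical monotonicity properties of the natural residual---that $\tau \mapsto \En{z - \Pi_\mathcal{Z}(z - \tau F(z))}$ is nondecreasing while $\tau \mapsto \tau^{-1}\En{z - \Pi_\mathcal{Z}(z - \tau F(z))}$ is nonincreasing---at the cost of an extra factor $\max\{\tau/\tau_0,\tau_0/\tau\}$ in the constant; under the stated assumption this refinement is unnecessary.
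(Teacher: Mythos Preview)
Your proposal is correct and follows essentially the same approach as the paper: a two-case dichotomy on whether $\En{\mathcal{G}_\tau(z)} \le \epsilon_0$, applying \cref{asp:ebc} in the first case and the diameter bound $D(\mathcal{Z})$ in the second, then taking the maximum of the resulting constants. The additional remarks you provide on the well-definedness of $\Pi_{\mathcal{Z}^*}$ and on the $\tau$-invariance of the constants are sound but not needed for the argument as stated.
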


Note that for each $z$, \eqref{eq:eb} only holds for the solution $\Pi_{\cZ^*}(z)$ (closest to $z$) and, in general, different $z$ yield different solutions $\Pi_{\mathcal{Z}^*}(z)$.
This is one of the most challenging issues in extending the proof of linear convergence under strong monotonicity from \citet{alacaoglu2022stochastic} to the more relaxed error-bound condition \eqref{eq:eb}, because 
the strong monotonicity holds for any pair of feasible points. 


For our convergence analysis, we define a Lyapunov function combining both iterate point $z_k$ and snapshot point $w_k$ with a weight parameter $\theta \in [0, 1]$: 
$
    \Phi^\theta_{z}(z_k, w_k) = \theta \En{ z_k - z }^2 + (1 - \theta) \En{ w_k - z }^2
$
for any $z \in \mathcal{Z}$ and a corresponding projection onto the solution set 
$
    \Pi^\theta_{\mathcal{Z}^*}(z_k, w_k) = \mathrm{argmin}_{z^* \in \mathcal{Z}^*} \Phi^\theta_{z^*}(z_k, w_k) 
$.
We define the distance from the iterate $(z_k, w_k)$ to the solution set $\mathcal{Z}^*$ as 
\begin{equation}
    \text{dist}^\theta_{\mathcal{Z}^*}(z_k, w_k) = \min\nolimits_{z^* \in \mathcal{Z}^*} \Phi^\theta_{z^*}(z_k, w_k). 
    \label{eq:theta-Z*-distance}
\end{equation}
Using the above notation and a sequence of auxiliary iterates $\tilde{z}_{k+1/2} = \Pi_\cZ(\bar{z}_k - \tau F(\bar{z}_k))$, we can derive another error-bound condition under the weighted distance \eqref{eq:theta-Z*-distance}, formalized in the following lemma. 
\begin{lemma} 
    Assume Assumptions~\ref{asp:1}-\ref{asp:4} and \textnormal{Error-Bound Condition} hold. 
    Let $\alpha \in [0, 1), \tau = \gamma \frac{\sqrt{1-\alpha}}{L}$, for $\gamma \in (0, 1)$, and $\theta \in [0, 1]$.
    Then, for each $k\geq 1$,~\cref{alg:eg-svrg} ensures
    \begin{align}
        \tau^2 \frac{\alpha L^2}{6 \bar{C}^2 \alpha L^2 + 8} \textnormal{dist}^\theta_{\mathcal{Z}^*}(z_k, w_k) \leq \alpha \En{ z_k - z_{k+1/2} }^2 + (1 - \alpha) \En{ w_k - z_{k+1/2} }^2, 
        \label{eq:eb-in-algo}
    \end{align} 
    where $\bar{C}$ is defined in \cref{lemma:eb-1}. 
    \label{lemma:eb-in-alg}
\end{lemma}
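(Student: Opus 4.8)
The plan is to reduce the claimed inequality \eqref{eq:eb-in-algo} to the deterministic error bound of \cref{lemma:eb-1} applied at the centering point $\bar{z}_k$, while carefully tracking the cross term $\En{z_k - w_k}^2$ on both sides. First I rewrite the right-hand side. Since $\bar{z}_k = \alpha z_k + (1-\alpha)w_k$, the barycenter (variance) identity gives
\begin{equation*}
\alpha\En{z_k - z_{k+1/2}}^2 + (1-\alpha)\En{w_k - z_{k+1/2}}^2 = \En{\bar{z}_k - z_{k+1/2}}^2 + \alpha(1-\alpha)\En{z_k - w_k}^2 .
\end{equation*}
The point of this step is that the target then splits into two independent ``budgets'': an $\En{\bar{z}_k - z_{k+1/2}}^2$ term that will absorb the error-bound contribution, and an $\alpha(1-\alpha)\En{z_k - w_k}^2$ term that will absorb whatever is genuinely controlled only by how far apart $z_k$ and $w_k$ are.

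Next I bound the left-hand side. This is where the obstacle flagged after \cref{lemma:eb-1} surfaces: the error bound is informative only for the solution \emph{nearest} to a given point, and that nearest solution changes with $k$. I resolve it simply by committing to $z^\star := \Pi_{\mathcal{Z}^*}(\bar{z}_k)$ once and for all. Using $\En{z_k - \bar{z}_k} = (1-\alpha)\En{z_k - w_k}$, $\En{w_k - \bar{z}_k} = \alpha\En{z_k - w_k}$, the triangle inequality, and $\theta(1-\alpha)^2 + (1-\theta)\alpha^2 \le 1$, one gets, for every $\theta \in [0,1]$,
\begin{equation*}
\textnormal{dist}^\theta_{\mathcal{Z}^*}(z_k, w_k) \le \Phi^\theta_{z^\star}(z_k, w_k) \le 2\,\textnormal{dist}(\bar{z}_k, \mathcal{Z}^*)^2 + 2\En{z_k - w_k}^2 .
\end{equation*}
Now apply \cref{lemma:eb-1} at $\bar{z}_k \in \cZ$: with $\tilde{z}_{k+1/2} = \Pi_\cZ(\bar{z}_k - \tau F(\bar{z}_k))$ it yields $\textnormal{dist}(\bar{z}_k, \mathcal{Z}^*) \le \tfrac{\bar{C}}{\tau}\En{\bar{z}_k - \tilde{z}_{k+1/2}}$. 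Comparing $\tilde{z}_{k+1/2}$ with $z_{k+1/2} = \Pi_\cZ(\bar{z}_k - \tau F(w_k))$ through nonexpansiveness of $\Pi_\cZ$ and the $L$-Lipschitzness of $F$ (which follows from \cref{asp:4} via two applications of Jensen's inequality) gives $\En{z_{k+1/2} - \tilde{z}_{k+1/2}} \le \tau L\En{w_k - \bar{z}_k} = \tau L\alpha\En{z_k - w_k}$, so that
\begin{equation*}
\textnormal{dist}(\bar{z}_k, \mathcal{Z}^*)^2 \le \tfrac{2\bar{C}^2}{\tau^2}\En{\bar{z}_k - z_{k+1/2}}^2 + 2\bar{C}^2 L^2\alpha^2\En{z_k - w_k}^2 .
\end{equation*}

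Substituting the third display into the second bounds $\textnormal{dist}^\theta_{\mathcal{Z}^*}(z_k, w_k)$ by $\tfrac{4\bar{C}^2}{\tau^2}\En{\bar{z}_k - z_{k+1/2}}^2 + (4\bar{C}^2 L^2\alpha^2 + 2)\En{z_k - w_k}^2$. Plugging in $\tau^2 = \gamma^2(1-\alpha)/L^2$, multiplying by $\tau^2\tfrac{\alpha L^2}{6\bar{C}^2\alpha L^2 + 8} = \tfrac{\gamma^2\alpha(1-\alpha)}{6\bar{C}^2\alpha L^2 + 8}$, and invoking the first display to recognize the right-hand side of \eqref{eq:eb-in-algo}, the claim follows provided $\tfrac{4\bar{C}^2 L^2\alpha}{6\bar{C}^2\alpha L^2 + 8} \le 1$ and $\tfrac{\gamma^2(4\bar{C}^2 L^2\alpha^2 + 2)}{6\bar{C}^2\alpha L^2 + 8} \le 1$ --- both immediate from $\gamma < 1$, $\alpha \in [0,1)$ and $\alpha^2 \le \alpha$, and these are precisely the inequalities that the constants $6$ and $8$ are tuned to satisfy. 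I expect this last round of constant-chasing to be the only real work; the conceptual obstacle lives entirely in the paragraph above, namely fixing a single $z^\star$ so that the solution-dependence of the error bound does not bite, after which everything is identities, the triangle inequality, nonexpansiveness, and \cref{lemma:eb-1}. (The statement asks only for $k \ge 1$; the degenerate $k = 0$ case, where $z_0 = w_0$ makes the cross term vanish and the estimate collapse onto the error bound at $\bar{z}_0 = z_0$, is covered as well.)
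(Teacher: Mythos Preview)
Your proof is correct and follows essentially the same route as the paper: introduce the auxiliary iterate $\tilde{z}_{k+1/2}=\Pi_\cZ(\bar z_k-\tau F(\bar z_k))$, bound $\En{z_{k+1/2}-\tilde z_{k+1/2}}\le \alpha\tau L\En{z_k-w_k}$ via nonexpansiveness and Lipschitzness, apply \cref{lemma:eb-1} at $\bar z_k$ with the single anchor $z^\star=\Pi_{\cZ^*}(\bar z_k)$, and then absorb the leftover $\En{z_k-w_k}^2$ term. The only organizational difference is that you use the exact barycenter identity to split the right-hand side as $\En{\bar z_k-z_{k+1/2}}^2+\alpha(1-\alpha)\En{z_k-w_k}^2$ and absorb the residual into the second ``budget'', whereas the paper instead bounds $\En{z_k-w_k}^2$ back through $z_{k+1/2}$ via $\En{z_k-w_k}^2\le 2\En{z_k-z_{k+1/2}}^2+2\En{w_k-z_{k+1/2}}^2$ together with $\tau^2\le(1-\alpha)/L^2$; both routes yield the stated inequality (indeed with the sharper constant $4$ in place of $6$).
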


\cref{lemma:eb-in-alg} paves the way for our main linear convergence theorem. 
We state the result here and give a proof sketch;
the full proof is in \cref{subsec:app-linear-convergence-for-loopless}. 
See similar results for the double-loop version of SVRG-EG in \cref{subsec:app-linear-convergence-for-double-loop}.

\begin{theorem} 
    Assume Assumptions~\ref{asp:1}-\ref{asp:4} and \textnormal{Error-Bound Condition} hold. 
    Let $\{ z_k, w_k \}_{k \in \bbN_+}$ be iterates generated by~\cref{alg:eg-svrg} with $p \in (0, 1]$, $\alpha \in [0, 1), \tau = \gamma \frac{\sqrt{1-\alpha}}{L}$, for $\gamma \in (0, 1)$. 
    Then, let $\theta = \frac{\alpha}{\alpha + \frac{1 - \alpha}{p}}$, we have 
    \begin{equation}
        \bbE\left[ \textnormal{dist}^\theta_{\mathcal{Z}^*}(z_{k+1}, w_{k+1}) \right] \le \rho \bbE\left[ \textnormal{dist}^\theta_{\mathcal{Z}^*}(z_k, w_k) \right], 
        \label{eq:linear-convergence-for-loopless}
    \end{equation}
    where $\rho = 1 - \tau^2 \frac{(1 - \gamma)\alpha L^2}{(4 \bar{C}^2 \alpha L^2 + 8)(\alpha + \frac{1 - \alpha}{p})}$. 
    \label{thm:linear-convergence-for-loopless}
\end{theorem}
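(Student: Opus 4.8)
The plan is to combine a one-step contraction estimate for the variance-reduced extragradient update with the error-bound inequality of \cref{lemma:eb-in-alg}. Write $\cF_k$ for the $\sigma$-field generated by everything through $z_{k+1/2}$, so $z_k,w_k,\bar{z}_k,z_{k+1/2}$ are $\cF_k$-measurable while $\xi_k$ (and hence $z_{k+1},w_{k+1}$) is not, and set $\EE_k[\cdot]=\EE[\cdot\mid\cF_k]$. \textbf{Step 1 (one-step descent for a fixed solution).} Fix an arbitrary $z^*\in\cZ^*$. From the two projection/three-point inequalities for $z_{k+1/2}=\Pi_\cZ(\bar{z}_k-\tau F(w_k))$ and $z_{k+1}=\Pi_\cZ(\bar{z}_k-\tau\hat F(z_{k+1/2}))$, the monotonicity of $F$ to cancel the inner-product terms that involve $z^*$, the unbiasedness $\EE_k[\hat F(z_{k+1/2})]=F(z_{k+1/2})$, the variance bound $\EE_k[\En{\hat F(z_{k+1/2})-F(z_{k+1/2})}^2]\le L^2\En{z_{k+1/2}-w_k}^2$ from \cref{asp:4}, the identity $\En{\bar{z}_k-z^*}^2=\alpha\En{z_k-z^*}^2+(1-\alpha)\En{w_k-z^*}^2-\alpha(1-\alpha)\En{z_k-w_k}^2$, and the randomized-snapshot identity $\EE_k[\En{w_{k+1}-z^*}^2]=p\,\EE_k[\En{z_{k+1}-z^*}^2]+(1-p)\En{w_k-z^*}^2$, one derives an inequality of the form
\begin{equation*}
  \EE_k\!\left[\Phi^\theta_{z^*}(z_{k+1},w_{k+1})\right]\le\Phi^\theta_{z^*}(z_k,w_k)-c_1\!\left(\alpha\En{z_k-z_{k+1/2}}^2+(1-\alpha)\En{w_k-z_{k+1/2}}^2\right),
\end{equation*}
valid whenever $\tau=\gamma\sqrt{1-\alpha}/L$ with $\gamma\in(0,1)$, which is exactly what forces the coefficient $c_1=c_1(\gamma)>0$ in front of the residual terms. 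The particular weight $\theta=\alpha/(\alpha+(1-\alpha)/p)$, equivalently $(1-\theta)/\theta=(1-\alpha)/(\alpha p)$, is chosen precisely so that the snapshot contributions $\En{w_k-z^*}^2$ and $\EE_k[\En{w_{k+1}-z^*}^2]$ match the $(1-\alpha)$-weight inherited from $\bar{z}_k$ once the randomized update is averaged; this is what turns the estimate into a genuine one-step contraction rather than a telescoping (ergodic) bound.

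\textbf{Step 2 (pass to the weighted distance).} Specialize $z^*=\Pi^\theta_{\cZ^*}(z_k,w_k)$, the minimizer defining $\textnormal{dist}^\theta_{\cZ^*}(z_k,w_k)$. Then the left-hand side of the Step-1 inequality is at least $\EE_k[\textnormal{dist}^\theta_{\cZ^*}(z_{k+1},w_{k+1})]$, since $\textnormal{dist}^\theta_{\cZ^*}\le\Phi^\theta_{z^*}$ for every $z^*$, while the first term on the right equals exactly $\textnormal{dist}^\theta_{\cZ^*}(z_k,w_k)$. Because Step 1 holds for an \emph{arbitrary fixed} $z^*$, we never need to transport the minimizing projection from iteration $k$ to iteration $k+1$; this is precisely the obstacle flagged after \cref{lemma:eb-1}, and the min-over-$z^*$ structure of $\textnormal{dist}^\theta_{\cZ^*}$ circumvents it.

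\textbf{Step 3 (apply the error bound and conclude).} Invoke \cref{lemma:eb-in-alg} to lower-bound the residual term by $c_2\,\textnormal{dist}^\theta_{\cZ^*}(z_k,w_k)$ with $c_2=\tau^2\alpha L^2/(6\bar{C}^2\alpha L^2+8)$, so that combining with Steps 1--2 gives $\EE_k[\textnormal{dist}^\theta_{\cZ^*}(z_{k+1},w_{k+1})]\le(1-c_1c_2)\,\textnormal{dist}^\theta_{\cZ^*}(z_k,w_k)$; taking total expectations then yields \eqref{eq:linear-convergence-for-loopless}. Matching $1-c_1c_2$ with the stated $\rho$ is bookkeeping of $c_1$ — in practice I would carry $c_1$ symbolically and, rather than citing \cref{lemma:eb-in-alg} verbatim, re-run its short argument inside the proof so that the error-bound and descent constants recombine into the claimed $(4\bar{C}^2\alpha L^2+8)(\alpha+(1-\alpha)/p)$ denominator and the $(1-\gamma)$ numerator.

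The main obstacle is Step 1: arranging the one-step estimate so that the SVRG variance term $L^2\En{z_{k+1/2}-w_k}^2$ is fully absorbed by the negative extragradient-gap terms while keeping $c_1>0$ (this is where the stepsize restriction $\gamma<1$ enters), and simultaneously picking the Lyapunov weight $\theta$ so that the low-probability snapshot refresh still produces a strict per-iteration decrease rather than merely a summable one. Once this clean descent is available for every $z^*\in\cZ^*$, the error-bound condition via \cref{lemma:eb-in-alg} converts it into the linear rate with only elementary manipulations.
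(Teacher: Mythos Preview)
Your proposal is correct and follows essentially the same route as the paper: derive the one-step descent inequality \eqref{eq:decrease-loopless-lyapunov-func-fixed-stepsize-concise} for an arbitrary $z^*\in\cZ^*$ with the Lyapunov weight $\theta=\alpha/(\alpha+(1-\alpha)/p)$, apply \cref{lemma:eb-in-alg} to convert the residual into $\textnormal{dist}^\theta_{\cZ^*}(z_k,w_k)$, and then specialize $z^*=\Pi^\theta_{\cZ^*}(z_k,w_k)$ and use $\textnormal{dist}^\theta_{\cZ^*}\le\Phi^\theta_{z^*}$ on the left. Your remark about re-running the lemma's argument to recover the $4\bar{C}^2\alpha L^2+8$ denominator (rather than the $6$ in the lemma statement) is exactly what the paper does as well.
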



\paragraph*{Proof sketch of~\cref{thm:linear-convergence-for-loopless}.} 

We start from a crucial iteration decrease inequality in \citet[Eq. 10]{alacaoglu2022stochastic} without dropping $\En{z_k - z_{k+1/2}}^2$ term: 
\begin{align}
    \bbE_k {\En{ z_{k + 1} - z^* }}^2 \leq& \alpha {\En{ z_k - z^* }}^2 + (1 - \alpha) {\En{ w_k - z^* }}^2 - (1 - \alpha)(1 - \gamma) {\En{ z_{k + 1/2} - w_k }}^2 \nonumber \\ 
    & \hspace{80pt} - (1 - \gamma) \bbE_k{\En{ z_{k + 1} - z_{k + 1/2} }}^2 - \alpha {\En{ z_{k+1/2} - z_k }}^2 \nonumber 
\end{align} 
for any $z^* \in \mathcal{Z}^*$. In the proof, $\EE_k[\cdot]$ and $\EE_{k+1/2}[\cdot]$ are the expectations w.r.t. $\mathcal{F}_k = \sigma(z_0, w_0, \ldots, z_k, w_k)$ and $\mathcal{F}_{k+1/2} = \sigma(z_0, w_0, \ldots, z_k, w_k, z_{k+1})$, respectively. 
$\EE[\cdot]$ is the total expectation w.r.t. $\mathcal{F}_0$. 

After applying~\cref{lemma:eb-in-alg} with a specific weight $\theta = {\alpha}/({\alpha + \frac{1 - \alpha}{p}})$ at each iteration we partially offset the terms (on the left-hand side) measuring the distance from the current iterates to the solution set. 

By using 
$
    \mathbb{E}_{k+1/2} \En{ w_{k+1} - z }^2 = p \En{ z_{k+1} - z }^2 + (1 - p) \En{ w_k - z }^2
$
for any $z \in \mathcal{Z}$ to handle the random update of snapshots, 
we transform the distance measure to the form of the Lyapunov function. 
After taking total expectation and applying the tower property, it yields that 
\begin{align*}
    \mathbb{E}\Big[ \alpha \En{ z_{k+1} - z^*_k }^2 + \frac{1 - \alpha}{p}\En{ w_{k+1} - z^*_k }^2 \Big] \leq \rho \mathbb{E} \Big[\alpha \En{ z_k - z^*_k }^2 + \frac{1 - \alpha}{p}\En{ w_k - z^*_k }^2 \Big]. 
\end{align*}
Finally, we set $z^*_k = \Pi^{\theta}_{\cZ^*}(z_k, w_k)$ and leverage the definition of projection based on our Lyapunov function to obtain that~\cref{eq:linear-convergence-for-loopless}. 

\paragraph*{Remark.}
It is vital to determine the right $z^*_k$ and $\theta$ because our VI setting allows multiple solutions. 
Note that the error-bound assumption only applies to the specific solution minimizing the distance to some feasible point. 
Unlike \citet{alacaoglu2022stochastic}, we cannot use one fixed solution when we sum over a sequence of indices.
Instead, we need to pick $z^*_k = \Pi_{\mathcal{Z}^*}^\theta(z_k, w_k)$.

Based on the nature of randomized methods, 
we say a randomized algorithm reaches \emph{$\epsilon$-accuracy} if the expected distance to the solution set is under $\epsilon$. 
We show the time complexity results of the algorithms following this convention. 

\begin{corollary}
    Set $p=\frac{2}{N}$, $\alpha = 1 - \frac{2}{N}$ and $\tau = \frac{0.99\sqrt{2}}{\sqrt{N}L}$ in \cref{alg:eg-svrg}. 
    Then, the time complexity to reach $\epsilon$-accuracy is $\mathcal{O}\left( (N + \bar{C}^2 N L^2) \log{\frac{1}{\epsilon}} \right)$. 
    \label{crl:linear-convergence-for-loopless}
\end{corollary}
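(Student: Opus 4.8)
}
The plan is to specialize \cref{thm:linear-convergence-for-loopless} to the stated parameters and then convert the per-iteration contraction into an oracle-complexity bound. First I would check admissibility: since $\alpha = 1-\tfrac2N$ gives $\sqrt{1-\alpha}=\sqrt{2/N}$, the choice $\tau=\frac{0.99\sqrt2}{\sqrt N L}$ has exactly the form $\tau=\gamma\frac{\sqrt{1-\alpha}}{L}$ required by the theorem, with $\gamma=0.99\in(0,1)$; moreover $p=\tfrac2N\in(0,1]$ and $\alpha\in[0,1)$ for all $N\ge 2$ (for the finitely many small $N$ the claimed bound is merely a statement about absolute constants, so one may assume $N\ge 4$). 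Substituting $p=\tfrac2N$, $\alpha=1-\tfrac2N$, $\gamma=0.99$ into $\rho$ gives $\alpha+\frac{1-\alpha}{p}=(1-\tfrac2N)+1=2-\tfrac2N$ and $\tau^2=\frac{2(0.99)^2}{NL^2}$, so the $L^2$ factors cancel and
\[
1-\rho \;=\; \frac{2(0.99)^2(0.01)\,(1-\tfrac2N)}{N\,\bigl(4\bar{C}^2(1-\tfrac2N)L^2+8\bigr)\,(2-\tfrac2N)}.
\]
For $N\ge 4$ the quantities $1-\tfrac2N$ and $2-\tfrac2N$ lie in fixed intervals bounded away from $0$, and $4\bar{C}^2(1-\tfrac2N)L^2+8=\Theta(\bar{C}^2L^2+1)$, so $1-\rho=\Theta\!\bigl(\tfrac1{N(\bar{C}^2L^2+1)}\bigr)$; in particular $1-\rho\ge \tfrac{c}{N(\bar{C}^2L^2+1)}$ for an absolute constant $c>0$.

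Next I would iterate \eqref{eq:linear-convergence-for-loopless} to get $\EE\bigl[\textnormal{dist}^\theta_{\mathcal{Z}^*}(z_k,w_k)\bigr]\le \rho^k\,\textnormal{dist}^\theta_{\mathcal{Z}^*}(z_0,w_0)\le \rho^k D(\mathcal{Z})^2$, using $z_0=w_0$ and the compactness of $\mathcal{Z}$ (\cref{asp:2}) for the last bound. Here $\theta=\frac{\alpha}{\alpha+(1-\alpha)/p}=\frac{1-2/N}{2-2/N}$, which is bounded below by $\tfrac14$ for $N\ge 3$, so $\textnormal{dist}(z_k,\mathcal{Z}^*)^2\le \theta^{-1}\textnormal{dist}^\theta_{\mathcal{Z}^*}(z_k,w_k)$, and by Jensen's inequality $\EE[\textnormal{dist}(z_k,\mathcal{Z}^*)]\le \bigl(\theta^{-1}\rho^k D(\mathcal{Z})^2\bigr)^{1/2}$. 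Hence $\epsilon$-accuracy holds once $\rho^k\le \theta\epsilon^2/D(\mathcal{Z})^2$, i.e. for $k\ge \log\!\bigl(D(\mathcal{Z})^2/(\theta\epsilon^2)\bigr)/\log(1/\rho)$. Using the elementary bound $\log(1/\rho)\ge 1-\rho\ge \tfrac{c}{N(\bar{C}^2L^2+1)}$ together with $\log\!\bigl(D(\mathcal{Z})^2/(\theta\epsilon^2)\bigr)=2\log\tfrac1\epsilon+O(1)$, it suffices to take $k=O\!\bigl(N(\bar{C}^2L^2+1)\log\tfrac1\epsilon\bigr)$ iterations.

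Finally I would account for the oracle cost of these iterations. Caching $F(w_k)$ and recomputing it only when the snapshot changes, each step of \cref{alg:eg-svrg} uses two single-index evaluations ($F_{\xi_k}(z_{k+1/2})$ and $F_{\xi_k}(w_k)$) plus, in expectation, $p\cdot N=2$ component evaluations to refresh the snapshot gradient, i.e. $O(1)$ amortized oracle calls per iteration, and there is a one-time $O(N)$ cost for $F(w_0)$. Multiplying the amortized per-iteration cost by the iteration count yields total expected cost $O(N)+O\!\bigl(N(\bar{C}^2L^2+1)\log\tfrac1\epsilon\bigr)=O\!\bigl((N+\bar{C}^2NL^2)\log\tfrac1\epsilon\bigr)$, which is the claim. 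The one step requiring genuine care is this amortized-cost bookkeeping: it is precisely because snapshot refreshes occur only with probability $p=\Theta(1/N)$ that the bound stays at $N+\bar{C}^2NL^2$ rather than degrading to $N^2$; the rest is the arithmetic simplification of $\rho$ and the standard $\log(1/\rho)\ge 1-\rho$ estimate.
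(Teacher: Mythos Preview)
Your proposal is correct and follows essentially the same approach as the paper: specialize the contraction factor from \cref{thm:linear-convergence-for-loopless} to the given parameters, show $1-\rho=\Theta\!\bigl(\tfrac{1}{N(\bar{C}^2L^2+1)}\bigr)$, iterate the contraction, and multiply the resulting iteration count by the $O(1)$ amortized oracle cost per step (two stochastic evaluations plus an expected $pN=2$ for snapshot refreshes). The only minor difference is that you explicitly convert from the Lyapunov distance $\textnormal{dist}^\theta_{\mathcal{Z}^*}$ back to the ordinary $\textnormal{dist}(z_k,\mathcal{Z}^*)$ via the $\theta^{-1}$ factor and Jensen, whereas the paper's proof applies the $\epsilon$-threshold directly to the Lyapunov function; your version is slightly more in line with the paper's stated definition of $\epsilon$-accuracy.
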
 

Though, with the fixed stepsize, 
we reach the same time complexity as the deterministic extragradient method 
(which needs a $O(N)$ time complexity because of its full gradient computing per iteration), 
SVRG-EG converges much faster numerically. 

Note that the convergence rate per iteration is affected by the small fixed stepsizes we used in analysis, 
while in practice we can usually use a much larger stepsize. 
\section{LINEAR CONVERGENCE UNDER WEAK SHARPNESS} 




In this section, we consider the weak sharpness condition for (solution sets of) VIs and show that it is sufficient for the linear convergence of SVRG-EG.
As discussed in \cref{sec:related-work}, this condition extends the classical geometric definition for convex minimization problems.
To state these conditions, we need additional notation.
Denote $\cZ^\circ := \left\{ v \in V: \inp{v}{z} \leq 0 \quad \forall\; z \in \cZ \right\}$ as the polar set of $\cZ \subseteq V$. 
If $\cZ$ is a convex set and $z \in \cZ$, the normal cone of $\cZ$ at $z$ is $N_{\cZ}(z) := \left\{ v \in V: \inp{v}{w - z} \leq 0 \quad \forall\; w \in \cZ \right\}$ and the tangent cone of $\cZ$ at $z$ is $T_{\cZ}(z) := [N_{{\cZ}(z)}]^\circ$. 
Denote $\interior(\cZ)$ as the interior of the set $\cZ$.
Let $\mathcal{Z}^*$ be the solution set of~\ref{prob:vi}, 
the weak sharpness condition of $\mathcal{Z}^*$ is defined as follows \citep{patriksson1993unified}: 

\begin{assumption}[Weak Sharpness] 
The operator $F$ satisfies: 
\begin{equation}
    - F(z^*) \in \textnormal{int}\left( \bigcap_{z' \in \mathcal{Z}^*} \big[\, T_{\mathcal{Z}}(z') \cap N_{\mathcal{Z}^*}(z') \,\big]^\circ \right),  
    \label{eq:weak-sharpness-VI}
\end{equation}
for all $z^* \in \cZ^*$. 
\label{asp:ws}
\end{assumption} 

Here, we give a simple example satisfying weak sharpness. 
\paragraph{Example.} 
Consider $x = (x_1, x_2) \in \mathbb{R}^2$ and a monotone operator $F(x) = ( \frac{x_1 + x_2}{2} -2 ,\, \frac{x_1 + x_2}{2} - 2 )$ and $\mathcal{X} = \{ x \,\vert\, 0 \leq x \leq 1, x_1 + x_2 \leq \frac{3}{2} \}$. 
One can verify that the corresponding VI has a solution set 
$\mathcal{X}^* = \{ x \,\vert\, \frac{1}{2} \leq x_1 \leq 1, x_1 + x_2 = \frac{3}{2} \}$. 
For $x^* \in \mathcal{X}^* \setminus \{ (\frac{1}{2}, 1), (1, \frac{1}{2}) \}$, $T_\mathcal{X}(x^*) = \{ x \,\vert\, x_1 + x_2 \leq 0 \}$ and $N_{\mathcal{X}^*}(x^*) = \{ x \,\vert\, x_1 = x_2 \}$, thus $[T_\mathcal{X}(x^*) \cap N_{\mathcal{X}^*}(x^*)]^\circ = \{ x \,\vert\, x_1 + x_2 \geq 0 \}$. 
For $x^* = (\frac{1}{2}, 1)$, $T_\mathcal{X}(x^*) = \{ x \,\vert\, x_2 \leq 0, x_1 + x_2 \leq 0 \}$ and $N_{\mathcal{X}^*}(x^*) = \{ x \,\vert\, x_2 - x_1 \geq 0 \}$, thus $[T_\mathcal{X}(x^*) \cap N_{\mathcal{X}^*}(x^*)]^\circ = \{ x \,\vert\, x_2 \geq 0, x_1 + x_2 \geq 0 \}$. 
Similarly, for $x^* = (1, \frac{1}{2})$, $[T_\mathcal{X}(x^*) \cap N_{\mathcal{X}^*}(x^*)]^\circ = \{ x \,\vert\, x_1 \geq 0, x_1 + x_2 \geq 0 \}$. 
Hence, $\bigcap_{x^* \in \mathcal{X}^*} [T_\mathcal{X}(x^*) \cap N_{\mathcal{X}^*}(x^*)]^\circ = \mathbb{R}^2_+$. 
Obviously, $-F(x^*) = - (\frac{5}{4}, \frac{5}{4}) \in \textnormal{int}( \bigcap_{x^* \in \mathcal{X}^*} [T_\mathcal{X}(x^*) \cap N_{\mathcal{X}^*}(x^*)]^\circ )$. 
Therefore, the corresponding VI satisfies weak sharpness. 
Note that this VI does not satisfy strong monotonicity since $\langle F(x) - F(y), x - y \rangle = (x_1 - y_1 + x_2 - y_2)^2 / 2$. 

Under this assumption, we have the following property, which is essential in our proof. 
\begin{lemma}
    If the solution set of the VI satisfies \textnormal{Weak Sharpness}, then there exists a $\mu$ such that 
    \begin{equation}
        \langle F(z^*),\, z - \Pi_{\cZ^*}(z) \rangle \geq \mu \En{ z - \Pi_{\cZ^*}(z) }, 
        \label{eq:weak-sharpness-error-bound} 
    \end{equation}
    for any $z \in \mathcal{Z}, z^* \in \mathcal{Z}^*$. 
    \label{lem:rsi}
\end{lemma}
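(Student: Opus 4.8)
The plan is to unfold the definition of weak sharpness into a separation/polarity statement and then convert that into the scalar inequality~\eqref{eq:weak-sharpness-error-bound} by a conic decomposition of the displacement vector $z - \Pi_{\cZ^*}(z)$. Fix $z^* \in \cZ^*$ and write $K(z') := T_{\cZ}(z') \cap N_{\cZ^*}(z')$ for $z' \in \cZ^*$, and $K := \bigcap_{z' \in \cZ^*} K(z')$. Assumption~\textnormal{Weak Sharpness} says $-F(z^*) \in \interior(K^\circ)$, i.e.\ $-F(z^*)$ lies in the interior of the polar of a closed convex cone. By a standard fact about cones, $v \in \interior(K^\circ)$ iff there is $\mu > 0$ with $\inp{v}{u} \le -\mu\En{u}$ for all $u \in K$ (equivalently, $\inp{-v}{u} \ge \mu \En{u}$ on $K$); I would record this as a short preliminary step, citing the closedness/convexity of $K$ and homogeneity. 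Applying it with $v = -F(z^*)$ gives
\begin{equation*}
    \inp{F(z^*)}{u} \ge \mu \En{u}, \qquad \forall\, u \in K.
\end{equation*}
Since this holds for every $z^* \in \cZ^*$ with a $\mu$ depending a priori on $z^*$, and I only need the inequality at a single reference point per $z$, I can either fix one $z^*$ or (if uniformity is wanted) take $\mu = \inf_{z^* \in \cZ^*}\mu(z^*)$ and argue this infimum is positive using compactness of $\cZ^*$ and continuity; for the lemma as stated, one $\mu$ suffices.

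Next I would show that the vector $u := z - \Pi_{\cZ^*}(z)$ lies in $K(\bar z)$ where $\bar z := \Pi_{\cZ^*}(z)$, for any $z \in \cZ$. The inclusion $u \in T_{\cZ}(\bar z)$ is immediate: $z \in \cZ$, $\bar z \in \cZ$, $\cZ$ is convex, so $z - \bar z = \bar z + (z - \bar z) - \bar z$ is a feasible direction into $\cZ$ at $\bar z$, hence in the tangent cone. The inclusion $u \in N_{\cZ^*}(\bar z)$ is exactly the first-order optimality condition for the Euclidean projection onto the convex set $\cZ^*$: for all $w \in \cZ^*$, $\inp{z - \bar z}{w - \bar z} \le 0$, which is the definition of $N_{\cZ^*}(\bar z)$. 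Therefore $u \in K(\bar z)$. However, the separation inequality above is stated for $u \in K$, the \emph{intersection} over all $z' \in \cZ^*$, not just $u \in K(\bar z)$ — so I cannot directly plug in. The fix is to instantiate the weak-sharpness inequality at $z^* = \bar z$ itself but recognize that $K \subseteq K(\bar z)$ is the wrong direction; instead I need the version of the polarity fact that uses $K(\bar z)$. This is where the structure of~\eqref{eq:weak-sharpness-VI} matters: because $-F(z^*) \in \interior(K^\circ)$ and $K^\circ = \overline{\mathrm{conv}}\big(\bigcup_{z'} K(z')^\circ\big) \supseteq K(\bar z)^\circ$, membership in $\interior(K^\circ)$ does not by itself give a separation over $K(\bar z)$. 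So I would instead argue directly: the quantity I must lower-bound is $\inp{F(\bar z)}{u}$ with $u \in K(\bar z)$, and here I invoke that $-F(z^*)$, being in $\interior(K^\circ)$ with $K = \bigcap_{z'} K(z')$, in particular satisfies — by a theorem of the alternative / Minkowski-type argument using that $u \in T_{\cZ}(\bar z) \cap N_{\cZ^*}(\bar z)$ and $\inp{F(\bar z)}{w - \bar z} \ge 0$ for all $w\in\cZ$ (the VI inequality at the solution $\bar z$) — that $\inp{F(\bar z)}{u} \ge \mu\En{u}$. Concretely I would decompose: by the VI at $\bar z$, $\inp{F(\bar z)}{u} \ge 0$ already; to get the \emph{sharp} bound $\mu\En{u}$ I use that $u$ additionally lies in $N_{\cZ^*}(\bar z)$, so $u$ is "transverse" to $\cZ^*$, and the interior condition quantifies exactly this transversality with margin $\mu$.

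The main obstacle, as the above paragraph signals, is the interplay between the \emph{per-solution} cones $K(z')$ appearing inside the intersection in~\eqref{eq:weak-sharpness-VI} and the need to apply the resulting bound at the single point $\bar z = \Pi_{\cZ^*}(z)$. The clean way to resolve it: prove the polarity lemma in the form "if $-F(z^*) \in \interior\big([T_{\cZ}(z^*) \cap N_{\cZ^*}(z^*)]^\circ\big)$ for all $z^*$, then $\inp{F(z^*)}{u} \ge \mu\En{u}$ for all $u \in T_{\cZ}(z^*)\cap N_{\cZ^*}(z^*)$," which is legitimate because $\interior(K^\circ) \subseteq \interior(K(z^*)^\circ)$ is \emph{false} in general but $-F(z^*) \in \interior(K^\circ)$ combined with $K^\circ \supseteq K(z^*)^\circ$ and $-F(z^*)$ being independent of which cone we test against still yields $\inp{F(z^*)}{u}\ge 0$ for $u\in K(z^*)$ from polarity alone; the strict margin $\mu$ on $K(z^*)$ then follows because $-F(z^*) \notin \partial K^\circ$ forces a uniform gap on the smaller cone $K(z^*) \subseteq K$. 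I would state this carefully as an auxiliary lemma and prove it by contradiction: if no $\mu>0$ works, extract a sequence $u_n \in K(z^*)$, $\En{u_n}=1$, with $\inp{F(z^*)}{u_n}\to 0$; pass to a convergent subsequence $u_n \to \bar u \in K(z^*) \subseteq K$ with $\inp{F(z^*)}{\bar u}=0$, contradicting $-F(z^*)\in\interior(K^\circ)$ which forces strict inequality $\inp{F(z^*)}{\bar u}>0$ on the unit sphere of $K$. Everything else — the tangent-cone and normal-cone membership of $u$, and assembling the final chain $\inp{F(\bar z)}{z-\Pi_{\cZ^*}(z)} \ge \mu\En{z - \Pi_{\cZ^*}(z)}$ — is routine.
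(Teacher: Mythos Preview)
Your proposal misreads the Weak Sharpness assumption. In~\eqref{eq:weak-sharpness-VI} the polar $^\circ$ is applied to each cone $K(z') := T_{\cZ}(z') \cap N_{\cZ^*}(z')$ \emph{before} the intersection is taken, so the hypothesis is
\[
-F(z^*) \;\in\; \interior\Big(\bigcap_{z' \in \cZ^*} K(z')^\circ\Big),
\]
not $-F(z^*) \in \interior\big(K^\circ\big)$ with $K := \bigcap_{z'} K(z')$ as you write. For closed convex cones these sets differ: $(\bigcap_{z'} K(z'))^\circ$ equals the closed conic hull of $\bigcup_{z'} K(z')^\circ$, which is in general strictly larger than $\bigcap_{z'} K(z')^\circ$. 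Your reading therefore weakens the assumption, and all the difficulty you encounter in passing from the intersection cone $K$ to the individual cone $K(\bar z)$ is an artifact of this misreading. Relatedly, the inclusion ``$K(z^*) \subseteq K$'' in your final contradiction argument is backwards (by definition of intersection, $K \subseteq K(z^*)$), so that compactness argument does not close either.

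With the correct reading the proof is short and is exactly what the paper does. Since $-F(z^*)$ lies in the interior of $\bigcap_{z'} K(z')^\circ$, there is a single $\mu > 0$ with $-F(z^*) + \mu \bB \subseteq K(z')^\circ$ for \emph{every} $z' \in \cZ^*$. Hence for any $u \in K(z')$, taking $y = u/\En{u}$ gives $\mu y - F(z^*) \in K(z')^\circ$, so $\inp{\mu y - F(z^*)}{u} \le 0$, i.e.\ $\inp{F(z^*)}{u} \ge \mu \En{u}$. Your verification that $u := z - \Pi_{\cZ^*}(z)$ lies in $T_{\cZ}(\bar z) \cap N_{\cZ^*}(\bar z)$ with $\bar z = \Pi_{\cZ^*}(z)$ is correct; now simply instantiate the scalar inequality with $z' = \bar z$ (and an arbitrary $z^* \in \cZ^*$) to obtain~\eqref{eq:weak-sharpness-error-bound}. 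Your ball-and-polar translation step was the right idea---it just needed to be applied on each $K(z')$ rather than on the intersection.
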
 
Note that this can be interpreted as an error-bound condition which says that the distance between $z$ and $\cZ^*$ is bounded by some residual term. 
We then leverage this property of weak sharpness to derive the linear convergence rate and the resulting logarithmic time complexity of SVRG-EG. 
See the proof in~\cref{app:linear-convergence-under-weak-sharpness}. 
\begin{theorem}
    Let Assumptions~\ref{asp:1}-\ref{asp:4} and \textnormal{Weak Sharpness} hold. 
    Let $\{ z_k, w_k \}_{k \in \bbN_+}$ be iterates generated by~\cref{alg:eg-svrg} with $p \in (0, 1]$, $\alpha \in [\frac{1}{2}, 1), \tau = \gamma \frac{\sqrt{1-\alpha}}{L}$, for $\gamma \in (0, 1)$. 
    Then, let $\theta = \frac{\alpha}{\alpha + \frac{1 - \alpha}{p}}$, we have 
    \begin{equation}
        \mathbb{E}_k\left[ \textnormal{dist}^\theta_{\mathcal{Z}^*}(z_{k+1}, w_{k+1}) \right] \leq \rho^{\textnormal{WS}} \textnormal{dist}^\theta_{\mathcal{Z}^*}(z_k, w_k), 
    \end{equation}
    where $\rho^{\textnormal{WS}} = 1 - \frac{c}{2(\alpha + \frac{1 - \alpha}{p})}$ and $c = \min\{ \tau \frac{\mu}{D(\cZ)},\, (1 - \gamma)(1 - \alpha) \}$. 
    \label{thm:linear-convergence-for-loopless-weak-sharpness}
\end{theorem}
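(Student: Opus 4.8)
The plan is to follow the architecture of the proof of \cref{thm:linear-convergence-for-loopless}, but to drive the contraction with the weak-sharpness inequality \eqref{eq:weak-sharpness-error-bound} of \cref{lem:rsi} in place of \cref{lemma:eb-in-alg}; since \eqref{eq:weak-sharpness-error-bound} is linear, rather than quadratic, in $\textnormal{dist}(\cdot,\mathcal{Z}^*)$, this is what produces the sharper factor $c$. \textbf{Step 1.} I would start from the one-step analysis of \citet{alacaoglu2022stochastic} underlying their Eq.~(10), but retain the term $2\tau\langle F(z^*),z_{k+1/2}-z^*\rangle$ that is discarded there via monotonicity and the definition \eqref{prob:vi} of a solution; using $\mathbb{E}_k[\hat F(z_{k+1/2})]=F(z_{k+1/2})$ (as $z_{k+1/2},w_k$ are $\mathcal{F}_k$-measurable) and monotonicity, this gives, for every $z^*\in\mathcal{Z}^*$,
\[
  \mathbb{E}_k\|z_{k+1}-z^*\|^2 \le \alpha\|z_k-z^*\|^2 + (1-\alpha)\|w_k-z^*\|^2 - Q_k - 2\tau\langle F(z^*),\,z_{k+1/2}-z^*\rangle ,
\]
where $Q_k\ge 0$ collects the movement terms. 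The hypothesis $\alpha\ge\tfrac12$ enters precisely to guarantee that the coefficient $\alpha$ of $\|z_{k+1/2}-z_k\|^2$ in $Q_k$ is at least $(1-\gamma)(1-\alpha)$, so that $Q_k\ge(1-\gamma)(1-\alpha)\big(\|z_{k+1/2}-w_k\|^2+\|z_{k+1/2}-z_k\|^2\big)$; I will also repeatedly use $\bar z_k-w_k=\alpha(z_k-w_k)$ and $\bar z_k-z_k=-(1-\alpha)(z_k-w_k)$.

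\textbf{Step 2.} Fix $z^*_k:=\Pi^\theta_{\mathcal{Z}^*}(z_k,w_k)$, which is $\mathcal{F}_k$-measurable. Since $\Pi_{\mathcal{Z}^*}(z_{k+1/2})\in\mathcal{Z}$ and $z^*_k$ solves \eqref{prob:vi}, we have $\langle F(z^*_k),\,\Pi_{\mathcal{Z}^*}(z_{k+1/2})-z^*_k\rangle\ge 0$, hence by \cref{lem:rsi},
\[
  \langle F(z^*_k),\,z_{k+1/2}-z^*_k\rangle \ \ge\ \langle F(z^*_k),\,z_{k+1/2}-\Pi_{\mathcal{Z}^*}(z_{k+1/2})\rangle \ \ge\ \mu\,\textnormal{dist}(z_{k+1/2},\mathcal{Z}^*) .
\]
Substituting $z^*=z^*_k$ into Step 1 turns the gap term into $-2\tau\mu\,\textnormal{dist}(z_{k+1/2},\mathcal{Z}^*)$. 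I then handle the random snapshot through $\mathbb{E}_{k+1/2}\|w_{k+1}-z^*_k\|^2=p\|z_{k+1}-z^*_k\|^2+(1-p)\|w_k-z^*_k\|^2$ and the tower property: forming $\mathbb{E}_k[\alpha\|z_{k+1}-z^*_k\|^2+\tfrac{1-\alpha}{p}\|w_{k+1}-z^*_k\|^2]$ and dividing by $\alpha+\tfrac{1-\alpha}{p}$, the left side dominates $\mathbb{E}_k[\textnormal{dist}^\theta_{\mathcal{Z}^*}(z_{k+1},w_{k+1})]$ (the value of $\Phi^\theta$ at the particular solution $z^*_k$ exceeds the minimum over $\mathcal{Z}^*$), while the right side equals $\textnormal{dist}^\theta_{\mathcal{Z}^*}(z_k,w_k)$ minus $\big(\alpha+\tfrac{1-\alpha}{p}\big)^{-1}\big(Q_k+2\tau\mu\,\textnormal{dist}(z_{k+1/2},\mathcal{Z}^*)\big)$, because $z^*_k$ is the $\Phi^\theta$-minimizer at step $k$. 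Since $(1-\rho^{\textnormal{WS}})(\alpha+\tfrac{1-\alpha}{p})=\tfrac{c}{2}$, it remains to prove the purely geometric, $\mathcal{F}_k$-deterministic inequality
\[
  Q_k + 2\tau\mu\,\textnormal{dist}(z_{k+1/2},\mathcal{Z}^*) \ \ge\ \tfrac{c}{2}\,\textnormal{dist}^\theta_{\mathcal{Z}^*}(z_k,w_k) .
\]

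\textbf{Step 3 (the main obstacle).} I would first upper-bound the target: writing $\textnormal{dist}^\theta_{\mathcal{Z}^*}(z_k,w_k)=\min_{z^*\in\mathcal{Z}^*}\Phi^\theta_{z^*}(z_k,w_k)$ and evaluating at $\Pi_{\mathcal{Z}^*}(\bar z_k)$, the identities of Step 1 together with $\alpha\ge\tfrac12$ give $\textnormal{dist}^\theta_{\mathcal{Z}^*}(z_k,w_k)\lesssim\|z_k-w_k\|^2+\textnormal{dist}(\bar z_k,\mathcal{Z}^*)^2$ (up to absolute constants). The $\|z_k-w_k\|^2$ piece is absorbed by $Q_k$ via $\|z_{k+1/2}-w_k\|^2+\|z_{k+1/2}-z_k\|^2\ge\tfrac12\|z_k-w_k\|^2$. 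For the $\textnormal{dist}(\bar z_k,\mathcal{Z}^*)^2$ piece I split on the size of the half-step $\|z_{k+1/2}-\bar z_k\|$ relative to $\textnormal{dist}(\bar z_k,\mathcal{Z}^*)$: if $\|z_{k+1/2}-\bar z_k\|$ is a fixed fraction of $\textnormal{dist}(\bar z_k,\mathcal{Z}^*)$, then $Q_k\gtrsim(1-\gamma)(1-\alpha)\|z_{k+1/2}-\bar z_k\|^2$ (convexity of $\|\cdot\|^2$ applied at $\bar z_k=\alpha z_k+(1-\alpha)w_k$) already dominates; otherwise the triangle inequality $\textnormal{dist}(z_{k+1/2},\mathcal{Z}^*)\ge\textnormal{dist}(\bar z_k,\mathcal{Z}^*)-\|z_{k+1/2}-\bar z_k\|$ yields $\textnormal{dist}(z_{k+1/2},\mathcal{Z}^*)\gtrsim\textnormal{dist}(\bar z_k,\mathcal{Z}^*)$, and since $\textnormal{dist}(\bar z_k,\mathcal{Z}^*)\le D(\mathcal{Z})$ we get $2\tau\mu\,\textnormal{dist}(z_{k+1/2},\mathcal{Z}^*)\gtrsim\tfrac{\tau\mu}{D(\mathcal{Z})}\textnormal{dist}(\bar z_k,\mathcal{Z}^*)^2$. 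Taking the worse of the two regimes, and balancing how much of the budget $Q_k$ is spent on the $\|z_k-w_k\|^2$ piece versus the half-step, produces the constant $c=\min\{\tau\mu/D(\mathcal{Z}),(1-\gamma)(1-\alpha)\}$, hence $\rho^{\textnormal{WS}}$.

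The crux is Step 3, and it has the same source of difficulty as the error-bound analysis: \cref{lem:rsi} only controls $\textnormal{dist}(z_{k+1/2},\mathcal{Z}^*)$ through the \emph{moving} projection $\Pi_{\mathcal{Z}^*}(z_{k+1/2})$, whereas the Lyapunov function is anchored at the fixed reference $z^*_k=\Pi^\theta_{\mathcal{Z}^*}(z_k,w_k)$. Bridging these through $\bar z_k$ and the extragradient half-step $z_{k+1/2}$, and spending the movement budget $Q_k$ precisely in the regime where the half-step is large while letting weak sharpness carry the other regime, is where the work lies; carefully tracking the constants through this dichotomy — and using $\alpha\ge\tfrac12$ to keep the $\|z_{k+1/2}-z_k\|^2$ coefficient in $Q_k$ usable — is what yields the stated rate.
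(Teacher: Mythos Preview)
Your Steps 1--2 match the paper's proof almost exactly: one retains the term $-2\tau\langle F(z_{k+1/2}),z_{k+1/2}-z^*\rangle$ (equivalently $-2\tau\langle F(z^*),z_{k+1/2}-z^*\rangle$ after monotonicity), chooses $z^*=z^*_k=\Pi^\theta_{\mathcal{Z}^*}(z_k,w_k)$, and passes to the Lyapunov form via the $w_{k+1}$ update and the tower property. The divergence is entirely in Step~3.

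The paper does \emph{not} use a case split. Its route, packaged as \cref{lem:rsi-in-alg}, is a single clean inequality: first, use boundedness of $\mathcal{Z}$ to convert the linear term into a quadratic one,
\[
  \langle F(z^*),z_{k+1/2}-z^*\rangle \;\ge\; \mu\,\textnormal{dist}(z_{k+1/2},\mathcal{Z}^*) \;\ge\; \tfrac{\mu}{D(\mathcal{Z})}\,\textnormal{dist}(z_{k+1/2},\mathcal{Z}^*)^2,
\]
and then apply, for any $z^*\in\mathcal{Z}^*$,
\[
  \|z_{k+1/2}-z^*\|^2 \;\ge\; \tfrac12\Phi^\theta_{z^*}(z_k,w_k)-\Phi^\theta_{z_{k+1/2}}(z_k,w_k) \;\ge\; \tfrac12\,\textnormal{dist}^\theta_{\mathcal{Z}^*}(z_k,w_k)-\Phi^\theta_{z_{k+1/2}}(z_k,w_k).
\]
Minimizing in $z^*$ on the left and multiplying by $2c\le 2\tau\mu/D(\mathcal{Z})$ gives
\[
  c\,\textnormal{dist}^\theta_{\mathcal{Z}^*}(z_k,w_k)\;\le\;2\tau\langle F(z_{k+1/2}),z_{k+1/2}-z^*\rangle + 2c\,\Phi^\theta_{z_{k+1/2}}(z_k,w_k).
\]
The point is that the correction term $2c\,\Phi^\theta_{z_{k+1/2}}(z_k,w_k)=2c\bigl(\theta\|z_k-z_{k+1/2}\|^2+(1-\theta)\|w_k-z_{k+1/2}\|^2\bigr)$ is \emph{exactly} a weighted version of the movement $Q_k$; taking $c\le \tfrac12(1-\gamma)(1-\alpha)$ (and using $\alpha\ge\tfrac12$ for the $\theta$-part) absorbs it completely, and the stated $c=\min\{\tau\mu/D(\mathcal{Z}),(1-\gamma)(1-\alpha)\}$ drops out with no further accounting.

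Your dichotomy can in principle produce \emph{a} contraction, but as written it is unlikely to yield the stated constant: you spend $Q_k$ twice (once on $\|z_k-w_k\|^2$, once in Case~1 on $\|z_{k+1/2}-\bar z_k\|^2$), and the threshold $\lambda$ in the split introduces factors like $\lambda^2$ and $(1-\lambda)$ into the two cases, so the resulting minimum is not $\min\{\tau\mu/D(\mathcal{Z}),(1-\gamma)(1-\alpha)\}$ without further loss. The paper's trick---squaring the distance via $D(\mathcal{Z})$ and recognizing the residual as precisely $\Phi^\theta_{z_{k+1/2}}(z_k,w_k)$---is what makes the constants come out cleanly and avoids the case analysis altogether.
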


\begin{corollary}
    Set $p=\frac{2}{N}$, $\alpha = 1 - \frac{2}{N}$ and $\tau = \frac{0.99\sqrt{2}}{\sqrt{N}L}$ in \cref{alg:eg-svrg}. 
    Then, the time complexity to reach $\epsilon$-accuracy is $\mathcal{O}\left( (N + \frac{\sqrt{N}L}{\mu}) \log{\frac{1}{\epsilon}} \right)$. 
    \label{crl:linear-convergence-for-loopless-weak-sharpness}
\end{corollary}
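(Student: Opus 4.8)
The plan is to instantiate \cref{thm:linear-convergence-for-loopless-weak-sharpness} with the stated parameters, convert the per-iteration contraction into an iteration count, and multiply by the (expected) per-iteration gradient cost. First I would pin down the implicit constant $\gamma$: from $\tau = \gamma\sqrt{1-\alpha}/L$ together with $1-\alpha = 2/N$, the prescription $\tau = 0.99\sqrt{2}/(\sqrt{N}L)$ forces $\gamma = 0.99$, so $1-\gamma = 0.01$. Moreover $(1-\alpha)/p = (2/N)/(2/N) = 1$, hence $\alpha + (1-\alpha)/p = 2 - 2/N \in [1,2]$ for $N \ge 2$, and $\alpha = 1 - 2/N \ge \tfrac12$ (the lower bound required by the theorem) holds once $N \ge 4$.

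Next I would estimate $c = \min\{\tau\mu/D(\cZ),\,(1-\gamma)(1-\alpha)\}$ term by term: the first term equals $0.99\sqrt{2}\,\mu/(\sqrt{N}L\,D(\cZ)) = \Theta(\mu/(\sqrt{N}L))$, and the second equals $0.02/N = \Theta(1/N)$. Taking the minimum (so that the reciprocal is the maximum), $1/c = \Theta(N + \sqrt{N}L/\mu)$; and since $\alpha + (1-\alpha)/p = \Theta(1)$, the contraction factor obeys $1 - \rho^{\textnormal{WS}} = c/\big(2(\alpha + (1-\alpha)/p)\big) = \Theta(c)$, so $1/(1-\rho^{\textnormal{WS}}) = \Theta(N + \sqrt{N}L/\mu)$.

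To get the iteration count, I would take total expectation in \cref{thm:linear-convergence-for-loopless-weak-sharpness} and iterate via the tower property to obtain $\bbE[\textnormal{dist}^\theta_{\cZ^*}(z_K,w_K)] \le (\rho^{\textnormal{WS}})^K\,\textnormal{dist}^\theta_{\cZ^*}(z_0,w_0)$, with $\textnormal{dist}^\theta_{\cZ^*}(z_0,w_0) \le D(\cZ)^2$ because $z_0 = w_0 \in \cZ$. Using $\log(1/\rho^{\textnormal{WS}}) \ge 1-\rho^{\textnormal{WS}}$, the algorithm reaches $\epsilon$-accuracy within $K = O\big(\tfrac{1}{1-\rho^{\textnormal{WS}}}\log\tfrac1\epsilon\big) = O\big((N + \sqrt{N}L/\mu)\log\tfrac1\epsilon\big)$ iterations; the $D(\cZ)$ dependence, and whether accuracy is measured in $\textnormal{dist}^\theta$ or in $\textnormal{dist}$, only affect constants inside the logarithm.

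Finally I would account for cost: each iteration uses two stochastic oracle calls for $\hat F(z_{k+1/2})$ and, with probability $p$, one full-gradient call for the refreshed snapshot, so the expected per-iteration cost is $O(1 + pN) = O(1)$ units (with $F(w_0)$ costing a one-time $O(N)$); multiplying by $K$ and using $K = \Omega(N)$ gives a total expected running time $O(N + K) = O\big((N + \sqrt{N}L/\mu)\log\tfrac1\epsilon\big)$, as claimed. There is no genuine obstacle here — the only point requiring mild care is the two-way comparison defining $c$, namely recognizing that $1/c$ picks up the larger of $N$ and $\sqrt{N}L/\mu$; everything else is a direct substitution into \cref{thm:linear-convergence-for-loopless-weak-sharpness}.
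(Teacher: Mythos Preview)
Your proposal is correct and follows essentially the same route as the paper: instantiate the theorem with the given parameters to compute $1/c = \Theta(N + \sqrt{N}L/\mu)$ (the paper uses $\max \le \text{sum}$ where you use $1/\min = \max$ of reciprocals, which is equivalent up to constants), convert the contraction into an $O\big(\tfrac{1}{1-\rho^{\mathrm{WS}}}\log\tfrac{1}{\epsilon}\big)$ iteration count, and multiply by the $O(1)$ expected per-iteration oracle cost. Your treatment is in fact slightly more careful than the paper's (noting the $N \ge 4$ requirement for $\alpha \ge \tfrac12$ and the one-time $O(N)$ snapshot cost), but there is no substantive difference in approach.
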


\paragraph*{Remark.} 
Though weak sharpness results in a better dependence on $N$ in the convergence rate compared to that of the deterministic EG method in~\citet{tseng1995linear}, it is more restrictive than the error-bound condition. In particular, it does \emph{not} hold for bilinear saddle-point problems in general. 


\section{INCREASING ITERATE AVERAGING SCHEMES} 

The main convergence guarantees given in \citet{alacaoglu2022stochastic} are for the uniform averages of iterates.
However, empirically the uniform average of iterates rarely performs well.
In contrast, as shown in \citet{gao2021increasing}, when solving saddle-point problems using first-order algorithms such as the deterministic EG method, increasingly-weighted averages often outperform uniform averages numerically while having the same theoretical convergence guarantees.

Motivated by such empirical evidence, we show that~\cref{alg:eg-svrg} preserves the $O(1/T)$ rate of convergence when using increasing iterate averaging.
In particular, we establish a $O(1/T)$ convergence rate for SVRG-EG with polynomial (e.g. uniform, linear, quadratic or cubic) weighted iterate averaging.
We also provide the same results for double-loop SVRG-EG, which can be found in Appendix~\ref{subsec:app-iias-double-loop}.

Here, we consider the generalized variational inequality problem: 
to find $z^* \in V$ such that 
\begin{equation}
    \inp{ F(z^*)}{ z - z^* } + g(z) - g(z^*) \geq 0, \; \forall\; z \in V, 
    \tag{GVI} 
    \label{prob:generalized-vi}
\end{equation}
which is subject to the following assumptions: 
\begin{assumption} 
    The solution set of \ref{prob:generalized-vi} is nonempty. 
    \label{asp:7} 
\end{assumption} 
\begin{assumption} 
    The function $g$ is a proper convex lower semicontinuous function. 
    \label{asp:8} 
\end{assumption} 
\begin{assumption} 
    The operator $F$ is monotone, i.e., $\inp{F(z_1) - F(z_2)}{z_1-z_2} \ge 0,\; \forall z_1, z_2 \in \textnormal{dom}\, g$. 
    \label{asp:9} 
\end{assumption}
\begin{assumption} 
The operator $F$ has a stochastic oracle $F_{\xi}$ that is unbiased, i.e., $\mathbb{E}_\xi[F_{\xi}(z)] = F(z),\; \forall z \in V$ and $L$-Lipschitz in mean, i.e., 
$\bbE_\xi[ \En{ F_\xi(z_1) - F_\xi(z_2) }^2 ] \leq L^2 \En{ z_1 - z_2 }^2,\; \forall z_1, z_2 \in V$. 
    \label{asp:10} 
\end{assumption}

The classical~\ref{prob:vi} can be reduced from~\ref{prob:generalized-vi} by letting $g = 0$ if $z \in \cZ$ and $+\infty$ otherwise. 
Correspondingly, using 
$$\textnormal{Prox}_{\tau g}(z) = \arg\min_{z' \in V}\left\{ \frac{1}{2}\En{z' - z}^2 + \tau g(z') \right\}$$ 
to replace $\Pi_\cZ(z)$ in~\cref{alg:eg-svrg}, we can adapt SVRG-EG to solve~\ref{prob:generalized-vi}.
As is typical in the analysis of the convergence of algorithms for VIs, we chose the following gap function as the convergence measure:
$
\gap(w) = \max_{z \in \cC} \left\{ \inp*{F(z)}{w - z} + g(w) - g(z) \right\}
$.
Here, $\cC \subseteq V$ is a compact subset, which allows possible unboundedness of $\text{dom}\, g$ (see \citet[Lemma 1]{nesterov2007dual}).
The gap function is well-defined, convex, and equal to $0$ when $w$ is a solution to \eqref{prob:vi}.

To analyze the convergence of SVRG-EG under increasing iterate averaging, we need to extend \citet[Lemmas 2.2 \& 2.4]{alacaoglu2022stochastic} to handle non-uniform iterate weights.
\begin{lemma}
    Assume Assumptions~\ref{asp:7}-\ref{asp:10} hold. 
    Let $z_k, w_k, z_{k+1/2}$ be iterates generated by loopless SVRG-EG with $p \in (0, 1]$, $\alpha \in [0, 1)$, $\tau = \frac{\sqrt{1-\alpha}}{L} \gamma$, for $\gamma \in (0, 1)$. Then, for $q \in \{0, 1, 2, \ldots\}$ and any solution $z^*$ to~\ref{prob:generalized-vi}, we have 
    \begin{align} 
        \sum_{k=0}^{K-1} k^q ( \mathbb{E}_k{\En{ z_{k + 1} - z_{k + 1/2} }}^2 + (1-\alpha){\En{ z_{k + 1/2} - w_k }}^2 ) \leq K^q \frac{ ( \alpha + \frac{1 - \alpha}{p} ) }{1 - \gamma} \En{z_0 - z^*}^2. 
    \end{align}  
    \vspace{-8pt}
    \label{lemma:poly-sum-bound}
\end{lemma}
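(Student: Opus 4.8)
The plan is to reduce the weighted sum to a telescoping argument built on a single per-iteration Lyapunov descent. Fix any solution $z^*$ of~\eqref{prob:generalized-vi} and set $\Phi_k := \alpha \En{z_k - z^*}^2 + \tfrac{1-\alpha}{p}\En{w_k - z^*}^2$. I will first establish, for every $k \geq 0$,
\[
    \mathbb{E}_k\!\left[\Phi_{k+1}\right] \;\leq\; \Phi_k \;-\; (1-\gamma)\Bigl( \mathbb{E}_k\En{z_{k+1} - z_{k+1/2}}^2 + (1-\alpha)\En{z_{k+1/2} - w_k}^2 \Bigr);
\]
once this is in place, multiplying by the deterministic, nonnegative, nondecreasing weight $k^q$, taking total expectations, summing over $k = 0,\ldots,K-1$, and telescoping gives the claim.

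To get the descent, I would follow the one-step inequality underlying the convergence analysis of loopless SVRG-EG in \citet{alacaoglu2022stochastic} and transcribe it to the proximal setting of~\eqref{prob:generalized-vi}: replace $\Pi_\cZ$ by $\textnormal{Prox}_{\tau g}$ throughout, use the three-point inequality for $\textnormal{Prox}_{\tau g}$, the variance bound $\mathbb{E}_k\En{\hat F(z_{k+1/2}) - F(z_{k+1/2})}^2 \leq L^2 \En{z_{k+1/2} - w_k}^2$ (from Assumption~\ref{asp:10} and unbiasedness), and the stepsize $\tau = \gamma\sqrt{1-\alpha}/L$. Taking the free point equal to the solution $z^*$ makes the term $2\tau\bigl(\inp{F(z^*)}{z_{k+1/2} - z^*} + g(z_{k+1/2}) - g(z^*)\bigr)$ nonnegative---it is dominated, after using monotonicity (Assumption~\ref{asp:9}), by $2\tau\bigl(\inp{F(z_{k+1/2})}{z_{k+1/2}-z^*} + g(z_{k+1/2}) - g(z^*)\bigr)$, which in turn is $\geq 0$ since $z^*$ solves~\eqref{prob:generalized-vi}---so it is dropped, as is the nonnegative $\alpha\En{z_{k+1/2} - z_k}^2$. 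This leaves
\[
    \mathbb{E}_k\En{z_{k+1} - z^*}^2 \leq \alpha\En{z_k - z^*}^2 + (1-\alpha)\En{w_k - z^*}^2 - (1-\gamma)\mathbb{E}_k\En{z_{k+1} - z_{k+1/2}}^2 - (1-\alpha)(1-\gamma)\En{z_{k+1/2} - w_k}^2.
\]
Combining this with the snapshot identity $\mathbb{E}_{k+1/2}\En{w_{k+1} - z^*}^2 = p\En{z_{k+1} - z^*}^2 + (1-p)\En{w_k - z^*}^2$ and the elementary identity $(1-\alpha) + \tfrac{(1-\alpha)(1-p)}{p} = \tfrac{1-\alpha}{p}$ produces the displayed descent; also $\Phi_0 = \bigl(\alpha + \tfrac{1-\alpha}{p}\bigr)\En{z_0 - z^*}^2$ because $z_0 = w_0$.

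For the summation step, the bracketed quantity in the descent is nonnegative, so taking total expectations gives $\mathbb{E}[\Phi_{k+1}] \leq \mathbb{E}[\Phi_k]$; hence $\delta_k := \mathbb{E}[\Phi_k] - \mathbb{E}[\Phi_{k+1}] \geq 0$ and $\sum_{k=0}^{K-1}\delta_k = \mathbb{E}[\Phi_0] - \mathbb{E}[\Phi_K] \leq \Phi_0 = \bigl(\alpha + \tfrac{1-\alpha}{p}\bigr)\En{z_0 - z^*}^2$. Taking total expectations in the descent and rearranging gives $(1-\gamma)\,\mathbb{E}\bigl[\mathbb{E}_k\En{z_{k+1}-z_{k+1/2}}^2 + (1-\alpha)\En{z_{k+1/2}-w_k}^2\bigr] \leq \delta_k$; multiplying by $k^q$, using $k^q \leq (K-1)^q \leq K^q$ for $0 \leq k \leq K-1$ (valid since $q \geq 0$), and summing telescopes $\sum_{k=0}^{K-1}\delta_k$ and yields the asserted bound $\tfrac{K^q}{1-\gamma}\bigl(\alpha + \tfrac{1-\alpha}{p}\bigr)\En{z_0 - z^*}^2$. (The stated inequality is read with a total expectation on the left-hand side; the deterministic factors $k^q$ commute with the expectation by the tower property.)

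The main obstacle is the first step: carefully re-deriving the \citet{alacaoglu2022stochastic} one-step inequality in the proximal setting while retaining \emph{both} the $\mathbb{E}_k\En{z_{k+1}-z_{k+1/2}}^2$ and $(1-\alpha)\En{z_{k+1/2}-w_k}^2$ terms with coefficient $(1-\gamma)$ under the given stepsize, and checking that the weight $\tfrac{1-\alpha}{p}$ on $\En{w_k-z^*}^2$ in $\Phi_k$ is precisely what makes the loopless snapshot update close the recursion. After that, the polynomial weighting and telescoping are elementary, the only care being that every discarded term is nonnegative and that $k\mapsto k^q$ is nondecreasing.
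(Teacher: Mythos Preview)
Your proposal is correct and follows essentially the same approach as the paper: establish the one-step Lyapunov descent $\mathbb{E}_k[\Phi_{k+1}] \leq \Phi_k - (1-\gamma)\bigl(\mathbb{E}_k\En{z_{k+1}-z_{k+1/2}}^2 + (1-\alpha)\En{z_{k+1/2}-w_k}^2\bigr)$ by retaining the $\En{z_{k+1}-z_{k+1/2}}^2$ term in the \citet{alacaoglu2022stochastic} inequality (adapted to the proximal/GVI setting), take total expectations, and then pass to the weighted sum. The only cosmetic difference is in the summation step: the paper uses Abel summation and bounds each $\mathbb{E}\Phi_k \leq \Phi_0$, whereas you bound $k^q \leq K^q$ directly and telescope $\sum_k \delta_k$; both rely on the same monotonicity $\mathbb{E}\Phi_{k+1} \leq \mathbb{E}\Phi_k$ and yield the same bound.
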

It is worth mentioning that here $\EE\En{ z_{k + 1/2} - w_k }^2 \rightarrow 0$ as $k \rightarrow \infty$, 
which implies a reduction in variance, 
that is, $\EE\En{\hat{F}(z_{k+1/2}) - F(z_{k+1/2})}^2 \rightarrow 0$ as $k \rightarrow \infty$ (by Lipschitz continuity).  
The second lemma is to switch expectation and maximum operators. 
\begin{lemma}
    Let $\mathcal{F} = (\mathcal{F}_k)_{k \ge 0}$ be a filtration and ($u_k$) a stochastic process adapted to $\mathcal{F}$ with $\mathbb{E}[ u_{k+1} \vert \mathcal{F}_k ] = 0$. 
    Then for any $K \in \mathbb{N}, z_0 \in \dom\, g$, $q \in \{0, 1, 2, \ldots\}$, and any compact set $\cC \subset V$, 
    \begin{align} 
        \mathbb{E} \left[ \max_{z \in \cC} \sum_{k=1}^{K-1} k^q \inp{u_{k+1}}{z} \right] \leq \frac{1}{2} K^q \max_{z, z' \in \cC} {\En{ z - z' }}^2 + \frac{1}{2} \sum_{k=0}^{K-1} k^q \EE{\En{ u_{k+1} }}^2. 
        \label{eq:bound-exp-max-inp}
    \end{align} 
    \vspace{-8pt}
    \label{lemma:bound-exp-max-inp}
\end{lemma}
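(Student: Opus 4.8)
The plan is to run a standard ``ghost iterate'' (weighted online-gradient-descent) argument. First, I would introduce an auxiliary sequence $(v_k)_{k\ge 1}$ that stays in $\cC$: take any deterministic $v_1\in\cC$ (for concreteness $v_1 = \Pi_\cC(z_0)$) and set
\[
    v_{k+1} = \Pi_\cC\!\left(v_k + K^{-q}k^q\, u_{k+1}\right),\qquad k = 1,\dots,K-1.
\]
Two features matter: (i) $v_k$ is $\cF_k$-measurable, so it is predictable with respect to the increments $u_{k+1}$; and (ii) the stepsize $K^{-q}k^q\le 1$ is tuned precisely so that the squared-step term it produces deflates, after rescaling, from order $k^{2q}\En{u_{k+1}}^2$ down to order $k^q\En{u_{k+1}}^2$, matching the sum on the right-hand side of \eqref{eq:bound-exp-max-inp}.

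Second, I would invoke the Euclidean projection inequality $\En{\Pi_\cC(a)-z}^2\le\En{a-z}^2$ (valid for every $z\in\cC$) and expand the square: $\En{v_{k+1}-z}^2 \le \En{v_k + K^{-q}k^q u_{k+1} - z}^2$, so expanding the right side, multiplying by $\tfrac{K^q}{2}$, and using $K^{-q}k^{2q} = k^q(k/K)^q \le k^q$ for $k\le K$ yields
\[
    k^q\inp{u_{k+1}}{z - v_k} \le \tfrac{K^q}{2}\big(\En{v_k-z}^2 - \En{v_{k+1}-z}^2\big) + \tfrac{k^q}{2}\En{u_{k+1}}^2 .
\]
Summing over $k=1,\dots,K-1$ telescopes the bracketed differences down to $\tfrac{K^q}{2}\En{v_1-z}^2$ (dropping $-\tfrac{K^q}{2}\En{v_K-z}^2\le 0$), which is at most $\tfrac{K^q}{2}\max_{z,z'\in\cC}\En{z-z'}^2$ since $v_1\in\cC$.

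Third, I would add $\sum_{k=1}^{K-1}k^q\inp{u_{k+1}}{v_k}$ to both sides to convert $\inp{u_{k+1}}{z-v_k}$ into $\inp{u_{k+1}}{z}$, then take $\max_{z\in\cC}$ — legitimate because the added term and the noise term do not depend on $z$ — and finally take total expectation. Since $v_k$ is $\cF_k$-measurable and $\mathbb{E}[u_{k+1}\mid\cF_k]=0$, the tower property gives $\mathbb{E}\inp{u_{k+1}}{v_k}=0$ for each $k$, so the ghost cross-term disappears, leaving exactly $\mathbb{E}\big[\max_{z\in\cC}\sum_{k=1}^{K-1}k^q\inp{u_{k+1}}{z}\big] \le \tfrac{1}{2}K^q\max_{z,z'\in\cC}\En{z-z'}^2 + \tfrac{1}{2}\sum_{k=1}^{K-1}k^q\,\mathbb{E}\En{u_{k+1}}^2$. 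Enlarging the sum to start at $k=0$ only adds the nonnegative term $0^q\,\mathbb{E}\En{u_1}^2$, which gives \eqref{eq:bound-exp-max-inp}.

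I expect the only genuinely delicate step to be the measurability bookkeeping that makes the martingale cross-term vanish: $v_{k+1}$ must be defined from $v_k$ and $u_{k+1}$ only, $v_1$ must be deterministic, and $\mathbb{E}[u_{k+1}\mid\cF_k]=0$ must be applied at exactly the index where $v_k$ is already $\cF_k$-measurable. Everything else is the routine Euclidean mirror-descent regret computation; the single non-obvious choice is the stepsize $K^{-q}k^q$, engineered so the per-iteration noise contribution shrinks from order $k^{2q}$ to order $k^q$ while inflating the initial-distance term by only the factor $K^q$ that already appears in the statement.
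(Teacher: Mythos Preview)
Your proof is correct and follows a ghost-iterate strategy similar in spirit to the paper's, but with a different mechanical execution. The paper defines the auxiliary sequence with \emph{unit} stepsize, $z_{k+1}=z_k+u_{k+1}$ (no projection), expands $\En{z_{k+1}-z}^2$, multiplies the resulting identity by $k^q$, and then handles the non-telescoping weighted sum $\sum_k k^q(\En{z_k-z}^2-\En{z_{k+1}-z}^2)$ via Abel summation, bounding $\sum_{k=1}^K (k^q-(k-1)^q)\En{z_k-z}^2$ by $K^q\max_{z,z'\in\cC}\En{z-z'}^2$. Your route instead bakes the weight into the stepsize, $v_{k+1}=\Pi_\cC(v_k+K^{-q}k^q u_{k+1})$, so that after rescaling by $K^q/2$ the distance terms telescope directly without Abel summation; the price is the extra elementary inequality $K^{-q}k^{2q}\le k^q$. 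Your version has the advantage that the projected ghost sequence stays in $\cC$ by construction, so the diameter bound $\En{v_1-z}^2\le\max_{z,z'\in\cC}\En{z-z'}^2$ is immediate; the paper's unprojected sequence $z_k$ may leave $\cC$, which makes its step $\max_{z\in\cC}\En{z_k-z}^2\le\max_{z,z'\in\cC}\En{z-z'}^2$ formally require the same projection you already built in.
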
 



Then, we prove the following $O(1/T)$ convergence rate for the loopless SVRG-EG. 
\begin{theorem}
    Assume Assumptions~\ref{asp:7}-\ref{asp:10} hold. 
    Let $z_k, w_k, z_{k+1/2}$ be iterates generated by loopless SVRG-EG with $p \in (0, 1]$, $\alpha \in [\frac{1}{2}, 1)$, 
    and $\tau = \frac{\sqrt{1-\alpha}}{L}\gamma$, for $\gamma \in (0, 1)$. 
    Then, for $q \in \{0, 1, 2, \ldots\}$, $K > q + 1$, and $z^K = \frac{1}{s_k} \sum_{k=0}^{K-1} k^q z_{k+1/2}$ 
    where $s_k = \sum_{k=0}^{K-1} k^q$, 
    \begin{equation}
        \EE\left[ \gap(z^K) \right] \leq \frac{(q+1)}{2\tau (K-q-1)} D_1 \max_{z, z' \in \cC} \En{z - z'}^2, 
    \end{equation}
    where $D_1 = 2 \big( 1 + \frac{\gamma^2 + 1}{1-\gamma} \big) \big( \alpha + \frac{1 - \alpha}{p} \big)$. 
    \label{thm:sublinear-convergence-iias-loopless}
\end{theorem}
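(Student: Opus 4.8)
The plan is to follow the classical ergodic-analysis recipe for extragradient-type methods, but with the polynomial weights $k^q$ carried through every step. First I would establish a per-iteration inequality of the form
\[
    k^q \big( \inp{F(z_{k+1/2})}{z_{k+1/2} - z} + g(z_{k+1/2}) - g(z) \big) \leq k^q \big( A_k(z) - A_{k+1}(z) \big) + k^q \inp{u_{k+1}}{z - z_{k+1/2}} + (\text{error terms}),
\]
where $A_k(z)$ is built from the Lyapunov quantities $\alpha\En{z_k - z}^2 + \tfrac{1-\alpha}{p}\En{w_k - z}^2$ appearing in \citet[Lemmas 2.2 \& 2.4]{alacaoglu2022stochastic}, and $u_{k+1} = \hat F(z_{k+1/2}) - F(z_{k+1/2})$ is the zero-mean variance-reduced noise. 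The three-point identity for the proximal step, monotonicity of $F$ (to replace $F(z_{k+1/2})$ by $F(z)$ inside the gap), and the Young-type bounds that absorb $\En{z_{k+1}-z_{k+1/2}}^2$ and $(1-\alpha)\En{z_{k+1/2}-w_k}^2$ are exactly the ingredients already present in \citet{alacaoglu2022stochastic}; the only new bookkeeping is that the telescoping is now weighted.

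Next I would sum over $k = 0, \dots, K-1$. The weighted "telescope" $\sum_k k^q (A_k - A_{k+1})$ does not collapse cleanly because the weights increase; using Abel summation it equals $-K^q A_K + \sum_{k=1}^{K-1} (k^q - (k-1)^q) A_k + (\text{boundary})$, and since $A_k \geq 0$ and $k^q - (k-1)^q \geq 0$ this is bounded by $\sum_{k=1}^{K-1}(k^q-(k-1)^q)\,\sup_k A_k$. But rather than bounding $\sup_k A_k$ directly, the cleaner route — and the one the statement's constant suggests — is to keep the term $\sum_k k^q\big(\En{z_{k+1}-z_{k+1/2}}^2 + (1-\alpha)\En{z_{k+1/2}-w_k}^2\big)$ on the right-hand side with a favorable sign and invoke \cref{lemma:poly-sum-bound} to bound it by $K^q \frac{\alpha + (1-\alpha)/p}{1-\gamma}\En{z_0 - z^*}^2$; likewise the $\sum_k k^q \EE\En{u_{k+1}}^2$ term that emerges from \cref{lemma:bound-exp-max-inp} is controlled by the same quantity via the variance bound. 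The noise term $\sum_k k^q \inp{u_{k+1}}{z - z_{k+1/2}}$ splits: the part pairing $u_{k+1}$ with $z_{k+1/2}$ has zero conditional expectation and vanishes after taking $\EE$, while the part pairing $u_{k+1}$ with the free variable $z$ is handled by \cref{lemma:bound-exp-max-inp} after taking $\max_{z \in \cC}$ inside the expectation.

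Then I would use convexity of the gap function in its argument: since $\gap$ is convex and $z^K = \tfrac{1}{s_k}\sum_{k=0}^{K-1} k^q z_{k+1/2}$ is a convex combination, Jensen gives $\gap(z^K) \leq \tfrac{1}{s_k}\sum_k k^q \big(\inp{F(z)}{z_{k+1/2}-z} + g(z_{k+1/2}) - g(z)\big)$ uniformly in $z$, so dividing the summed inequality by $s_k = \sum_{k=0}^{K-1} k^q$ and taking $\EE\max_{z\in\cC}$ yields the result. The final step is the arithmetic: all the $K^q$ factors from \cref{lemma:poly-sum-bound} and \cref{lemma:bound-exp-max-inp} combine, and the ratio $K^q / s_k$ is bounded using $s_k = \sum_{k=0}^{K-1} k^q \geq \int_{0}^{K-1} t^q\,dt = \tfrac{(K-1)^q \cdots}{q+1}$ — more precisely $s_k \geq \tfrac{(K-q-1)^{q+1}}{q+1}$ or a comparable bound — which produces the factor $\tfrac{q+1}{K-q-1}$ and the constraint $K > q+1$. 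Collecting constants gives $D_1 = 2(1 + \tfrac{\gamma^2+1}{1-\gamma})(\alpha + \tfrac{1-\alpha}{p})$.

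The main obstacle I anticipate is the weighted telescoping together with the variance term: in the uniform-weight case of \citet{alacaoglu2022stochastic} the Lyapunov differences telescope exactly and the $\En{z_{k+1/2}-w_k}^2$ terms are summed against constant weights, whereas here I must verify that the signs still work out when every term carries the factor $k^q$ and that \cref{lemma:poly-sum-bound} is precisely strong enough to absorb both $\sum k^q \En{z_{k+1}-z_{k+1/2}}^2$ and $\sum k^q \EE\En{u_{k+1}}^2$ (the latter via $\EE\En{u_{k+1}}^2 \leq$ const $\cdot (1-\alpha)\En{z_{k+1/2}-w_k}^2$ from the Lipschitz-in-mean assumption). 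Getting the constant $D_1$ exactly right — in particular the $\tfrac{\gamma^2+1}{1-\gamma}$ piece — will require care in how the Young's-inequality parameters are chosen when splitting the cross terms, but this is bookkeeping rather than a conceptual difficulty.
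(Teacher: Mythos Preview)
Your overall plan matches the paper's proof closely --- weighted per-iteration inequality, Abel/weighted telescoping bounded by $K^q\tilde\Phi_0(z)$, Jensen for the gap, \cref{lemma:bound-exp-max-inp} for the noise paired with the free $z$, and \cref{lemma:poly-sum-bound} to absorb the residual squared norms --- but there is one genuine missing ingredient.

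You treat the Lyapunov decrease $A_k(z)-A_{k+1}(z)$ as if it were available pathwise. It is not: the per-iteration inequality from the two prox steps gives you control of $\En{z_{k+1}-z}^2$, and converting this into $\alpha\En{z_{k+1}-z}^2+\tfrac{1-\alpha}{p}\En{w_{k+1}-z}^2$ requires the identity $\EE_{k+1/2}\En{w_{k+1}-z}^2=p\En{z_{k+1}-z}^2+(1-p)\En{w_k-z}^2$, which is an \emph{expectation} over the coin flip that decides whether $w_{k+1}=z_{k+1}$ or $w_{k+1}=w_k$. Since you must take $\max_{z\in\cC}$ \emph{before} the full expectation (that is what $\EE[\gap(z^K)]$ demands), you cannot apply this identity inside the max. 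The paper handles this by introducing a second $z$-dependent martingale-difference error
\[
    e_2(z,k)=\En{w_{k+1}-z}^2-p\En{z_{k+1}-z}^2-(1-p)\En{w_k-z}^2,
\]
which satisfies $\EE_{k+1/2}[e_2(z,k)]=0$, rewriting it as $2\inp{u'_{k+1}}{z}$ plus $z$-independent terms with $u'_{k+1}=pz_{k+1}+(1-p)w_k-w_{k+1}$, and then applying \cref{lemma:bound-exp-max-inp} a \emph{second} time with this $u'_{k+1}$. The variance $\EE_{k+1/2}\En{u'_{k+1}}^2=p(1-p)\En{z_{k+1}-w_k}^2$ is then split via $\En{z_{k+1}-w_k}^2\le 2\En{z_{k+1}-z_{k+1/2}}^2+2\En{z_{k+1/2}-w_k}^2$ and fed back into \cref{lemma:poly-sum-bound}. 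This second error is precisely what produces the ``$+1$'' in the factor $\tfrac{\gamma^2+1}{1-\gamma}$ inside $D_1$, so without it you will not recover the stated constant --- and, more importantly, without it the weighted telescoping step does not close at all once $\max_z$ precedes $\EE$.
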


\begin{corollary}
    Set $p=\frac{2}{N}$, $\alpha = 1 - \frac{2}{N}$ and $\tau = \frac{0.99\sqrt{2}}{\sqrt{N}L}$ in loopless SVRG-EG. 
    The time complexity to reach $\epsilon$-accuracy on the gap function is 
    $\mathcal{O}\left( \frac{L\sqrt{N}}{\epsilon} \right)$. 
    \label{crl:sublinear-convergence-iias-loopless}
\end{corollary}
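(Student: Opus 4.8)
The plan is to substitute the prescribed parameter values directly into the per-iterate bound of \cref{thm:sublinear-convergence-iias-loopless} and then convert the resulting iteration count into a total computational-cost count. The decisive first step is to pin down the implied value of $\gamma$. Since the theorem requires $\tau = \gamma\sqrt{1-\alpha}/L$ and the choice $\alpha = 1 - \frac{2}{N}$ gives $\sqrt{1-\alpha} = \sqrt{2/N} = \sqrt{2}/\sqrt{N}$, the prescribed $\tau = 0.99\sqrt{2}/(\sqrt{N}L)$ forces $\gamma = 0.99$, a constant in $(0,1)$. This is the key observation, because it makes every $\gamma$-dependent factor in the bound an absolute constant independent of $N$ and $\epsilon$.

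Next I would evaluate the two problem-dependent factors. With $p = \frac{2}{N}$ and $\alpha = 1 - \frac{2}{N}$, the composite weight satisfies $\alpha + \frac{1-\alpha}{p} = (1 - \frac{2}{N}) + \frac{2/N}{2/N} = 2 - \frac{2}{N} \le 2$, hence is $O(1)$; consequently $D_1 = 2\bigl(1 + \frac{\gamma^2+1}{1-\gamma}\bigr)\bigl(\alpha + \frac{1-\alpha}{p}\bigr) = O(1)$ at $\gamma = 0.99$. The stepsize reciprocal is $1/\tau = \sqrt{N}L/(0.99\sqrt{2}) = O(\sqrt{N}L)$. Treating the averaging order $q$ as a fixed constant (so $q+1 = O(1)$ and $K - q - 1 \ge K/2$ for all large $K$) and using compactness of $\cC$ to absorb $\max_{z,z'\in\cC}\En{z-z'}^2$ into a constant, \cref{thm:sublinear-convergence-iias-loopless} yields $\EE[\gap(z^K)] = O(\sqrt{N}L/K)$. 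Requiring this to be at most $\epsilon$ gives an iteration count of $K = O(\sqrt{N}L/\epsilon)$.

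Finally I would translate iterations into total work under the paper's convention that a full gradient costs $N$ units while a single-sample gradient costs $1$ unit. Each iteration performs two stochastic evaluations, $F_{\xi_k}(z_{k+1/2})$ and $F_{\xi_k}(w_k)$, at cost $O(1)$, together with a full gradient $F(w_{k+1})$ of cost $N$, but the latter is incurred only on a snapshot update, which occurs with probability $p$. The expected per-iteration cost is therefore $O(1) + pN = O(1) + 2 = O(1)$. Multiplying the $O(\sqrt{N}L/\epsilon)$ iterations by this $O(1)$ amortized cost gives the claimed total time complexity $O(L\sqrt{N}/\epsilon)$.

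The only place requiring genuine care is the amortization of the full-gradient cost: it is precisely the choice $p = \frac{2}{N}$ that makes $pN = 2 = O(1)$, so the occasional expensive full-gradient step does not dominate the running time. Were $p$ taken independent of $N$, this amortized cost would scale like $N$ and the bound would degrade; thus the argument hinges on reading off that the prescribed $p$ keeps the expected per-iteration cost bounded. Everything else is routine arithmetic substitution into the already-established bound, so I expect no substantive obstacle beyond verifying that $q$ is held fixed and that $K$ is large enough for $K - q - 1 = \Theta(K)$.
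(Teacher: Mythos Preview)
Your proposal is correct and follows essentially the same approach as the paper: substitute the prescribed parameters into the bound of \cref{thm:sublinear-convergence-iias-loopless} to obtain $K=\mathcal{O}(\sqrt{N}L/\epsilon)$ iterations, then multiply by the expected per-iteration cost $2+pN=4=\mathcal{O}(1)$. The paper's proof is terser but identical in substance, including the same computation of $\alpha+\frac{1-\alpha}{p}=2-\frac{2}{N}$ and the same amortized cost accounting.
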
 

\section{NUMERICAL EXPERIMENTS} 

In this section, we show applications of our results and provide numerical results on the performance of SVRG-EG algorithm for these applications. 

The Error-Bound Condition is known to hold for the following case: 
\begin{theorem}{\textnormal{\citep[Theorem 2.2 (b)]{tseng1995linear}}} 
    If $F$ is affine and $\cZ$ is a polyhedral set in~\ref{prob:vi}, then 
    for every $\tau \in (0, \infty)$, there exists positive $\epsilon_0$ and $C_0$ such that Error-Bound Condition holds. 
    \label{thm:affine}
\end{theorem}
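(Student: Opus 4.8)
}
The plan is to combine the piecewise-affine structure of the proximal-gradient mapping in the polyhedral setting with Hoffman's error bound for systems of linear (in)equalities. Fix $\tau \in (0,\infty)$, write $F(z) = Mz + q$, and write $\cZ = \{z : Az \le b\}$. First I would invoke the classical fact (a consequence of the KKT conditions of the projection quadratic program, grouping points by the active set of the solution) that the Euclidean projection $\Pi_\cZ$ onto a polyhedron is piecewise affine: the space $V$ decomposes into finitely many closed polyhedra $P_1, \dots, P_r$ on each of which $\Pi_\cZ$ coincides with an affine map. Composing with the affine map $z \mapsto z - \tau F(z)$ shows $\mathcal{G}_\tau$ is itself piecewise affine over a finite polyhedral partition; write $\mathcal{G}_\tau(z) = A_i z + b_i$ and $P_i = \{z : C_i z \le d_i\}$ for $z \in P_i$. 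The standard projection characterization of VI solutions gives $\cZ^* = \{z : \mathcal{G}_\tau(z) = 0\} = \bigcup_{i=1}^r R_i$ where $R_i := \{z \in P_i : A_i z + b_i = 0\}$; each $R_i$ is a polyhedron and $R_i \subseteq \cZ^*$. (As an aside, monotonicity together with Minty's lemma makes $\cZ^*$ convex, hence a single polyhedron, but this is not needed.)

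For each $i$ with $R_i \ne \emptyset$, Hoffman's lemma applied to the linear system ``$A_i z + b_i = 0$, $C_i z \le d_i$'' yields a constant $c_i$ with $\distt(z, R_i) \le c_i\bigl( \En{A_i z + b_i} + \En{(C_i z - d_i)_+} \bigr)$ for all $z \in V$; restricting to $z \in P_i$ annihilates the second term, so $\distt(z, \cZ^*) \le \distt(z, R_i) \le c_i \En{\mathcal{G}_\tau(z)}$. For each $i$ with $R_i = \emptyset$, the set $A_i P_i + b_i := \{A_i z + b_i : z \in P_i\}$ is a polyhedron (a linear image of a polyhedron), it is closed, and it omits $0$; hence $\delta_i := \distt(0, A_i P_i + b_i) > 0$, so $\En{\mathcal{G}_\tau(z)} \ge \delta_i$ for every $z \in P_i$. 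Setting $\epsilon_0 := \tfrac12 \min\{\delta_i : R_i = \emptyset\}$ (any positive constant if every $R_i$ is nonempty) and $C_0 := \max\{c_i : R_i \ne \emptyset\}$ (the index set is nonempty since $\cZ^* \ne \emptyset$), any $z \in \cZ$ with $\En{\mathcal{G}_\tau(z)} \le \epsilon_0$ must lie in some $P_i$ with $R_i \ne \emptyset$, whence $\distt(z, \cZ^*) \le C_0 \En{\mathcal{G}_\tau(z)}$, which is exactly the Error-Bound Condition.

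The step I expect to be the main obstacle is not any single inequality but the bookkeeping that makes the piecewise-affine structure airtight: arguing that finitely many \emph{closed} polyhedral pieces cover $V$, that $\mathcal{G}_\tau$ genuinely restricts to one affine map per piece (so the representations agree on overlapping boundaries and $R_i \subseteq \cZ^*$ holds on the nose), and that the finitely many Hoffman constants $c_i$ and gaps $\delta_i$ can be collated into a single pair $(C_0,\epsilon_0)$ that is $\tau$-admissible. The remaining ingredients --- the identity $\cZ^* = \mathcal{G}_\tau^{-1}(0)$, piecewise linearity of polyhedral projection, closedness of linear images of polyhedra, and Hoffman's lemma itself --- are classical, and I would cite rather than reprove them.
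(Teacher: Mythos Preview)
The paper does not prove \cref{thm:affine}; it is stated with attribution to \citet[Theorem 2.2(b)]{tseng1995linear} and used as a black box to justify that matrix games, EFGs, and the image-segmentation SPP satisfy the Error-Bound Condition. So there is no ``paper's own proof'' to compare against.

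That said, your proposal is correct and is essentially the classical argument (the one in Tseng's paper, itself leaning on Robinson's polyhedral-multifunction theory): the composition of an affine map with a polyhedral projection is piecewise affine on a finite closed polyhedral cover, so $\mathcal{G}_\tau$ is piecewise affine; on each piece where the zero set is nonempty, Hoffman's lemma gives a local error bound; on the remaining pieces, $\|\mathcal{G}_\tau\|$ is bounded away from zero by closedness of linear images of polyhedra; taking a max/min over finitely many pieces yields the uniform $(C_0,\epsilon_0)$. One small point worth tightening: you write $\cZ^* = \{z : \mathcal{G}_\tau(z)=0\}$, but the fixed-point characterization of VI solutions requires $z\in\cZ$, so strictly $\cZ^* = \{z\in\cZ : \mathcal{G}_\tau(z)=0\}$. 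Since the Error-Bound Condition is only asserted for $z\in\cZ$, this is harmless --- just intersect each piece $P_i$ with the polyhedron $\cZ$ before defining $R_i$, so that $R_i \subseteq \cZ^*$ holds on the nose. With that caveat, the bookkeeping you flag (finite closed cover, consistent affine representatives on overlaps, collation of constants) is routine and your proposal would go through as written.
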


Equilibrium computatin in matrix games and two-player zero-sum EFGs with perfect recall,
and the primal-dual formulation of certain image segmentation problems,
can all be written as VIs with an affine operator and polyhedral feasible set~\citep{stengel1996efficient}, and hence our linear convergence guarantees apply. 

We contrast (loopless) SVRG-EG with many highly competitive methods including the deterministic extragradient algorithm,
the \emph{primal-dual algorithm} (PDA) \citep{chambolle2011first}, 
\emph{CFR+}~\citep{tammelin2014solving} (in matrix games this is simply regret matching$^+$ with alternation and linear averaging), 
and \emph{optimistic online mirror descent} (OOMD) with the $l_2$ norm and entropy regularizers (or \emph{dilated entropy} for EFGs~\citep{hoda2010smoothing,kroer2020faster,farina2021better}) \citep{rakhlin2013optimization,syrgkanis2015fast}.
We compare these algorithms based on last iterate and linear iterate averaging.
More details about these algorithms
are specified in~\cref{sec:app-experiment-descriptions}.  

\paragraph{Matrix games}
We consider bilinear zero-sum matrix games of the form: 
\begin{equation}
    \min_{x \in X} \max_{y \in Y} \; x^\top A y, 
    \label{eq:matrix-game}
\end{equation}
where $X$ and $Y$ are simplexes in $n$ and $m$ dimensional spaces, i.e., 
$X = \Delta^n = \{ x \in \mathbb{R} \; \vert \; \sum_i x_i = 1,\; x \ge 0 \}$ and 
$Y = \Delta^m$. 
We can regard bilinear games as VIs by setting $z=(x,y)$, $F(z) = (Ay, -A^\top x)$ and
$g(z) = \delta_X(x) + \delta_Y(y)$. 

We use the oracle discussed in \citet{alacaoglu2022stochastic}. 
For each iteration, we randomly 
pick $(i, j) \in [n] \times [m]$ and let $F_\xi(z) = (A_{\cdot j} y_j, - A_i^\top x_i)$ with probability $p_{i, j} = p_i p_j$, 
where $p_i = {\En{A_i}^2}/{\En{A}_F^2}$ and $p_j = {\En{A_{\cdot j}}^2}/{\En{A}_F^2}$. Under this oracle, Assumption~\ref{asp:4} holds with $L = \En{A}_F$. 

We run SVRG-EG on four classes of matrix games: policeman and burglar game \citep{nemirovski2013mini}, 
two symmetric matrices from \citet{nemirovski2004prox}, and random matrices. 
Descriptions for these classes are in~\cref{sec:app-experiment-descriptions}. 
For each game, we first compare (loopless) SVRG-EG algorithm with its deterministic counterpart (full gradient) EG algorithm (left plots) with last iterate, uniform, linear, and quadratic iterate averaging for both algorithms. 
Then we compare to all the algorithms on last-iterate behavior (middle plots), and with linear iterate averaging (right plots).
Here, we show results of policeman and burglar game in~\cref{fig:pb}. 
See~\cref{sec:app-additional-results} for more results. 

\begin{figure*}[ht] 
    \begin{center}
    \includegraphics[width=0.32\textwidth]{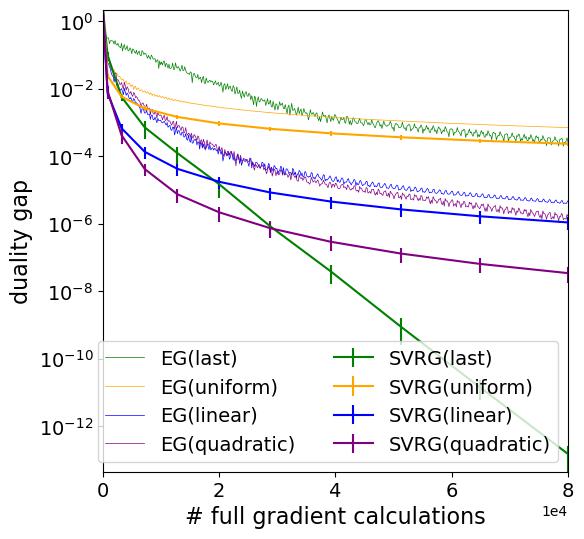}
    \includegraphics[width=0.32\textwidth]{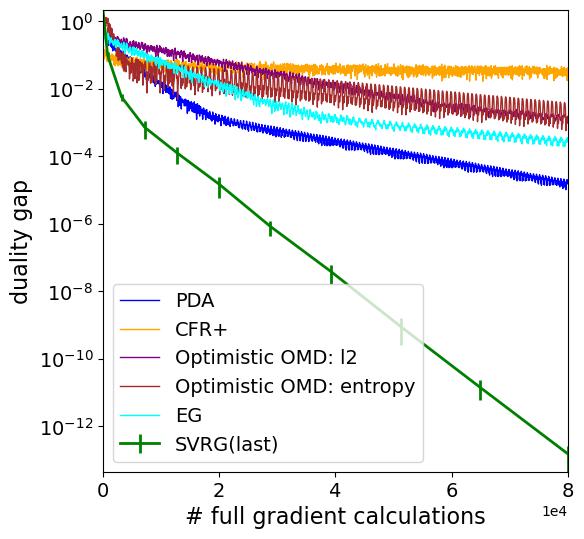}
    \includegraphics[width=0.32\textwidth]{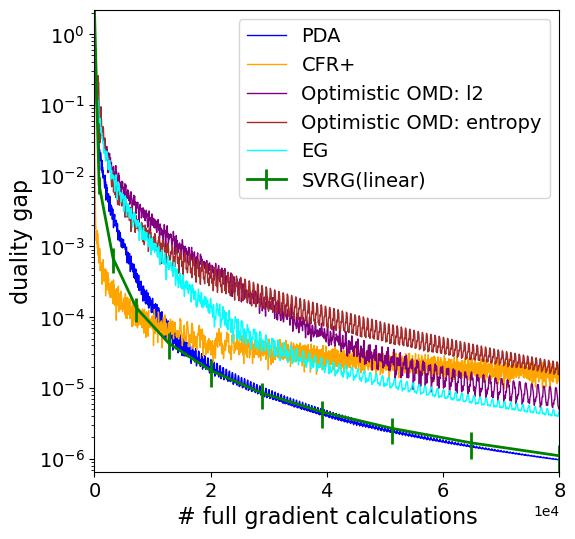}
    \caption{Numerical results on policeman and burglar game. 
    Random algorithms are implemented with seeds 0-9. 
    We draw plots with error bars representing standard deviations across different seeds. 
    The y-axis shows performance in terms of the duality gap. 
    The x-axis shows units of computation in terms of the number of evaluations of $F$ (full gradient calculations).
    The left plot compares SVRG-EG with deterministic EG.
    The center plot compares the last-iterate behavior of all algorithms.
    The right plot compares the linear average of all algorithms.
    }
    \label{fig:pb}
    \end{center}
\end{figure*}




In all cases, we see that the SVRG-EG algorithm outperforms deterministic EG, 
no matter whether we look at last iterate, uniform, linear, or quadratic iterate averaging. 
In the policeman and burglar problem, the last iterates of SVRG-EG show a linear convergence rate, unlike any other method. 
In random matrix game, linear and quadratic iterate averaging performs best, and SVRG-EG with linear averaging beats every method except CFR+.

\paragraph{Extensive-form games}

We also evaluate the numerical performance of SVRG-EG on extensive-form games, 
which can be cast as bilinear saddle-point problems similar to \cref{eq:matrix-game} using the \emph{sequence-form} representation~\citep{stengel1996efficient}. 
Instead of a simplex, the feasible region of EFGs is describled by the \emph{treeplex}, 
which is generalization of simplexes that captures the sequential structure of an EFG~\citep{hoda2010smoothing}. 

We test the EG methods on the \emph{Leduc} poker game \citep{southey2012bayes} and the \emph{Search} game from \citet{kroer2020faster}. 
As before, we use the same oracle such that Assumption $4$ holds with $L = \En{A}_F$. 
The results on Search are in \cref{fig:search5}, while the results on Leduc are deferred to~\cref{sec:app-additional-results}.  

\begin{figure*}[ht]
    \begin{center}
        \includegraphics[width=0.32\textwidth]{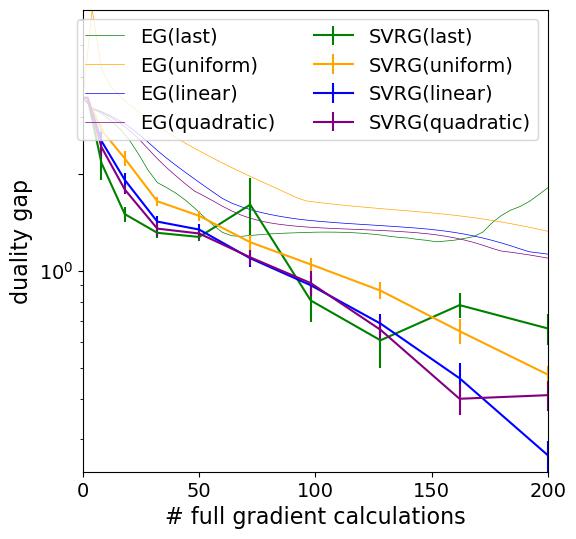}
        \includegraphics[width=0.32\textwidth]{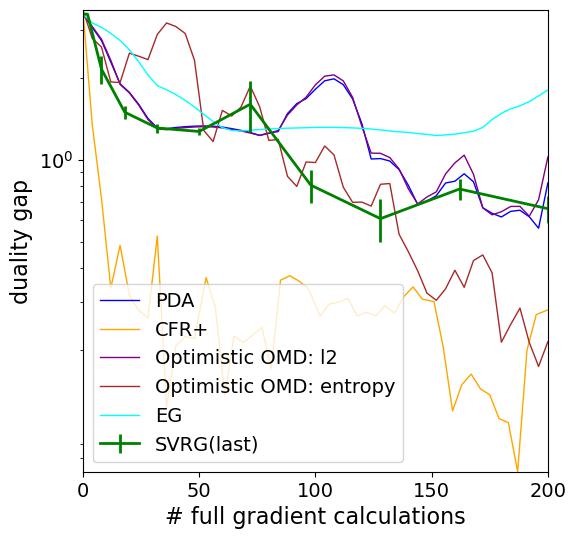}
        \includegraphics[width=0.32\textwidth]{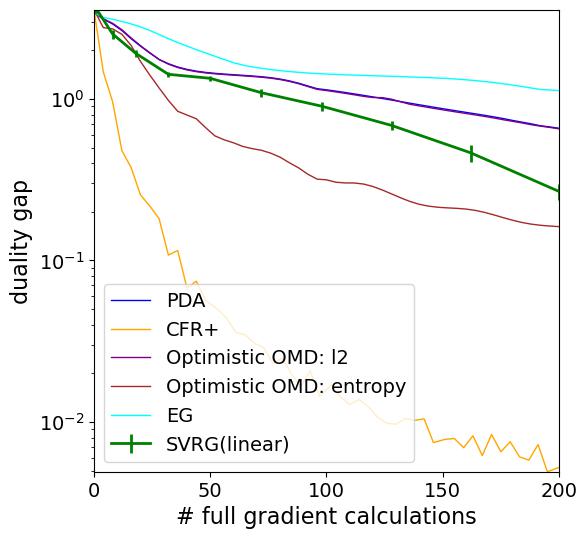}
        \caption{Numerical results on Search (zero sum) game. Random algorithms are implemented with seeds 0-9. The setup is the same as in \cref{fig:pb}. } 
        \label{fig:search5}
    \end{center}
\end{figure*}

In Search, the SVRG-EG algorithms outperform EG. 
For Leduc in the appendix, we find that SVRG-EG and EG are very similar. 
This is likely because the payoff matrices $A$ in EFGs are more sparse, and $\En{A}_F$ is much larger than $\En{A}_2$. 
Thus, the theoretical stepsizes for SVRG-EG are much smaller, which makes them converge slowly. 
IIAS again leads to much better performance than uniform iterate averaging. 
In both cases, though, SVRG-EG is still not competitive with CFR+. It is an interesting future direction to design better gradient estimators or adaptive stepsize schemes for improving the performance of SVRG-EG to be competitive with CFR+.

\paragraph{Image segmentation}
Image segmentation is a process of dividing an $m \times n$ image into a collection of $h$ regions of pixels such that each region is homogenerous, while the total interface between the regions is minimized.
It is essential in image analysis and pattern recognition, and generally difficult~\citep{deng1999color,cheng2001color}. 
\citet{chambolle2011first} showed that under the assumption that the optimal centroids of regions are known, the partitioning problem can be formulated as a SPP of the form: 
\begin{equation}
    \min_{u: u_{ij\cdot} \in \Delta^h} \max_{v: {\En{ v_{ij\cdot} }}_{\infty} \le \frac{1}{2}} \sum_{l=1}^h \left( u_{\cdot \cdot l}^T A v_{\cdot \cdot l} + \inp{d_{\cdot \cdot l}}{u_{\cdot \cdot l}} \right), 
    \label{eq:image-segmentation-spp}
\end{equation}
where $u$ and $v$ are $mnh$ and $2mnh$ dimensional vectors, respectively. $A \in \{ -1, 0, 1\}^{mn \times mn}$ and $d \in \mathbb{R}^{mnh}$. This is a non-strongly convex-concave SPP satisfying the condition in~\cref{thm:affine}, and thus the error-bound condition. 
We run the SVRG-EG algorithm and other Euclidean-distance-based deterministic methods on this problem (the remaining algorithms are not applicable to the dual domain).
We show an image instance in \cref{fig:image-segmentation}, while leaving other results to the appendix. 
SVRG-EG significantly outperforms all other methods with linear averaging and in last iterate (last iterate plots are in~\cref{sec:app-additional-results}). 

\begin{figure}[H]
    \begin{center}
        \begin{minipage}[]{0.33\textwidth}
            \begin{center}
            \includegraphics[width=0.5\textwidth]{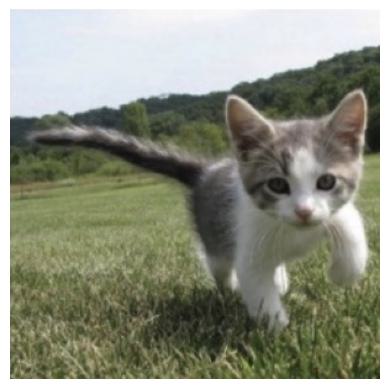}
            \includegraphics[width=0.5\textwidth]{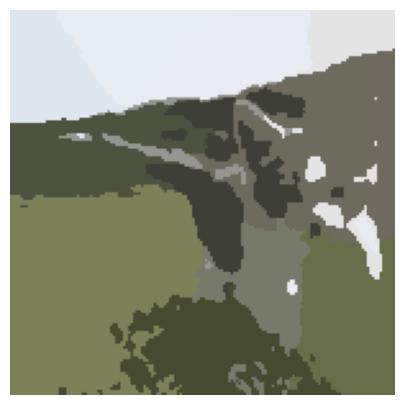}
            \end{center}
        \end{minipage}
        \begin{minipage}[]{0.66\textwidth}
            \begin{center}
            \includegraphics[width=0.49\textwidth]{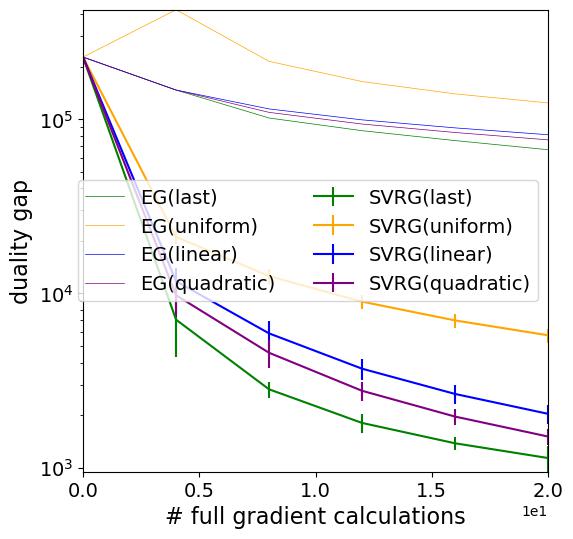}
            \includegraphics[width=0.49\textwidth]{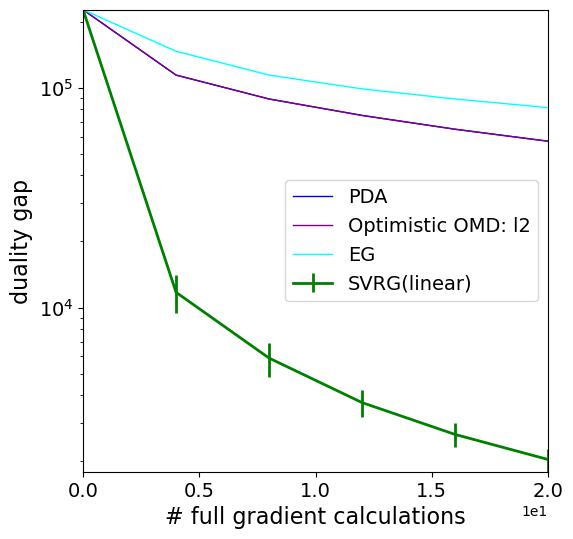}
            \end{center}
        \end{minipage} 
        \caption{An image segmentation instance.
        Top left is the original image. Top right is the segmentation result from solving~\cref{eq:image-segmentation-spp} via SVRG-EG. 
        Bottom left is numerical performance of SVRG-EG compared to EG. 
        Random algorithms are implemented with seeds 0-5. Other setups are the same as in \cref{fig:pb}.
        Bottom right is the numerical performance for all applicable algorithms, with linear averaging.
        Note that PDA and Optimistic OMD overlap.
        } 
        \label{fig:image-segmentation}
    \end{center}
\end{figure}


\bibliographystyle{plainnat}
\bibliography{refs}

\newpage
\appendix 

\hrule height 4pt 
\vskip 0.25in
\vskip -\parskip%

{\centering \LARGE\bf Supplementary Materials for \\ 
Convergence of SVRG-Extragradient for Variational Inequalities: Error Bounds and Increasing Iterate Averaging \par} 

\vskip 0.29in
\vskip -\parskip
\hrule height 1pt
\vskip 0.09in%

\addcontentsline{toc}{section}{Appendix} 
\part{} 
\parttoc 

\section{ADDITIONAL RELATED WORK}
\label{app:related-work} 

\paragraph*{Variance reduction.} There is a huge literature on variance reduction techniques in stochastic optimization.
\citet{johnson2013accelerating} is a seminal work in the context of stochastic gradient descent for finite-sum minimization, mainly motivated by supervised learning settings. Via relatively simple analysis, the authors showed that SVRG ensures linear convergence when the minimization problem is smooth and strongly convex.
\citet{roux2012stochastic,schmidt2013minimizing} considered closely related stochastic averaging gradient (SAG) methods and established their sublinear and linear convergence rates for general convex and strongly convex, smooth problems, respectively.
\citet{defazio2014saga} proposed SAGA, an incremental gradient algorithm that combines algorithmic ideas in SVRG and SAG. The authors showed that they can all be derived from a variance reduction approach, using different gradient and function value estimates, but SAGA achieves a faster linear convergence rate for strongly convex problems. 
\citet{kovalev2020don} introduced the loopless SVRG variant for convex minimization 
and showed linear convergence.  

\paragraph*{Recent work on stochastic extragradient.} There is also recent literature on stochastic (non-SVRG) extragradient methods for saddle-point problems where finite-sum structure and variance reduction are not considered~\citep{li2022convergence,du2022optimal,li2022nesterov}.
There, the problem settings and types of guarantees are very different from the ones we consider in this work.
In particular, none of them allow polyhedral feasible sets, such as strategy sets of EFGs.

\section{LINEAR CONVERGENCE ANALYSIS FOR SVRG-EG} 
\label{sec:app-linear-convergence-for-svrg-eg}

\subsection{Proofs of Lemmas}
\label{sec:app-linear-convergence-lemmas}

\subsection*{Proof of Lemma~\ref{lemma:eb-1}}

\begin{proof}
    The error-bound condition \eqref{eq:error-bound-assumption} tells us that there exist scalars $\epsilon_0 > 0$ and $C_0 > 0$ such that 
    \begin{equation}
    \min_{z^* \in \mathcal{Z}^*} \En*{z - z^*} \le \frac{C_0}{\tau} \En*{z - \Pi_{\mathcal{Z}}(z - \tau F(z))}
    \end{equation}
    for all $z \in \mathcal{Z}$ such that $\En{ z - \Pi_{\mathcal{Z}}(z - \tau F(z)) } \le \tau\epsilon_0$. 
    
    Obviously, for $z\in \cZ$ such that $\En{ z - \Pi_{\mathcal{Z}}(z - \tau F(z)) } > \tau\epsilon_0$, we have 
    \begin{equation}
        \En{z - \Pi_{\mathcal{Z}^*}(z)} \le D(\mathcal{Z}) = \frac{D(\mathcal{Z})}{\tau\epsilon_0} \tau\epsilon_0 < \frac{D(\mathcal{Z})}{\tau\epsilon_0} \En{z - \Pi_{\mathcal{Z}}(z - \tau F(z))}, 
    \end{equation}
    where $D(\mathcal{Z}) = \max_{z, z' \in \mathcal{Z}} \En{ z - z' }$. 

    Let $C = \max\left\{\frac{C_0}{\tau}, \frac{D(\mathcal{Z})}{\tau\epsilon_0}\right\}$. 
    Consequently, for all $z \in \mathcal{Z}$, the inequality 
    $\min_{z^* \in \mathcal{Z}^*} \En*{z - z^*} \leq C \En*{z - \Pi_{\mathcal{Z}}(z - \tau F(z))}$ holds. 
    This completes the proof of the lemma. 
\end{proof}

\subsection*{Proof of Lemma~\ref{lemma:eb-in-alg}}

\begin{proof} 
    To prove this lemma, we introduce a sequence of auxiliary ``quasi $(k+\frac{1}{2})$th iterates'': 
    for each $k$, denote 
    $\tilde{z}_{k+1/2} = \Pi_{\mathcal{Z}}(\bar{z}_k - \tau F(\bar{z}_k))$, 
    which is the standard update of the first step of the deterministic EG starting from $\bar{z}_k$ (using $F(\bar{z}_k)$ instead of $F(w_k)$).
    Intuitively, we show that $z_{k+1/2}$ is an approximation of $\tilde{z}_{k+1/2}$ when $\tau$ is small, while $\tilde{z}_{k+1/2}$ satisfies the error bound condition as in the classical analysis of deterministic EG. As such, error bounds (on $\bar{z}_k$) still hold.

    To bound the distance between $\tilde{z}_{k+1/2}$ and $z_{k+1/2}$, 
    we use non-expansiveness of projection operator $\Pi_\cZ$: 
    \begin{align}
        \En{ \tilde{z}_{k+1/2} - z_{k+1/2} } 
        = \En{ \Pi_{\mathcal{Z}}(\bar{z}_k - \tau F(\bar{z}_k)) - \Pi_{\mathcal{Z}}(\bar{z}_k - & \tau F(w_k)) } 
        \leq \tau \En{ F(\bar{z}_k) - F(w_k) } \nonumber \\ 
        & \leq \tau L \En{ \bar{z}_k - w_k } = \alpha \tau L \En{ z_k - w_k }. 
        \label{eq:dist-quasi-half}
    \end{align} 
    The second inequality follows from the fact that 
    the operator $F$ is also $L$-Lipschitz 
    since $\En{ F(z_1) - F(z_2) }^2 = \En{ \bbE_\xi[ F_\xi(z_1) - F_\xi(z_2) ] } \leq \bbE_\xi \En{ F_\xi(z_1) - F_\xi(z_2) }^2 \leq L^2 \En{z_1 - z_2}^2,\; \forall z_1, z_2 \in \cZ$. 

    With~\cref{eq:dist-quasi-half}, we can bound $\En{ \bar{z}_k - \tilde{z}_{k+1/2} }^2$ as 
    \begin{align}
        &\En{ \bar{z}_k - \tilde{z}_{k+1/2} }^2 \nonumber \\ 
        \leq& 2 \En{ \bar{z}_k - z_{k+1/2} }^2 + 2 \En{ \tilde{z}_{k+1/2} - z_{k+1/2} }^2 \nonumber \\ 
        \leq& 2 \alpha \En{ z_k - z_{k+1/2} }^2 + 2 (1 - \alpha) \En{ w_k - z_{k+1/2} }^2 - 2 \alpha(1 - \alpha) \En{z_k - w_k}^2 + 2 \alpha^2 \tau^2 L^2 \En{ z_k - w_k }^2 \nonumber \\ 
        =& 2 \alpha \En{ z_k - z_{k+1/2} }^2 + 2 (1 - \alpha) \En{ w_k - z_{k+1/2} }^2 + 2\alpha\left(\alpha \tau^2 L^2 - (1 - \alpha)\right) \En{ z_k - w_k }^2, 
        \label{eq:right}
    \end{align}
    where we use $\En{a + b}^2 \le 2 \En{a}^2 + 2 \En{b}^2$, 
    $\En{\alpha a + (1 - \alpha) b}^2 = \alpha \En{a}^2 + (1 - \alpha) \En{b}^2 - \alpha(1 - \alpha)\En{a - b}^2$ 
    and~\cref{eq:dist-quasi-half}. 

    On the other hand, we have 
    \begin{equation}
        \En{ \bar{z}_k  - \Pi_{\mathcal{Z}^*}(\bar{z}_k) }^2 \ge \frac{1}{2} \En{ z_k - \Pi_{\mathcal{Z}^*}(\bar{z}_k) }^2 - \En{ \bar{z}_k - z_k }^2 = \frac{1}{2} \En{ z_k - \Pi_{\mathcal{Z}^*}(\bar{z}_k) }^2 - (1 - \alpha)^2 \En{ z_k - w_k }^2 
        \label{eq:bound-left-z}
    \end{equation}
    and
    \begin{equation}
        \En{ \bar{z}_k - \Pi_{\mathcal{Z}^*}(\bar{z}_k) }^2 \ge \frac{1}{2} \En{ w_k - \Pi_{\mathcal{Z}^*}(\bar{z}_k) }^2 - \En{ \bar{z}_k - w_k }^2 = \frac{1}{2} \En{ w_k - \Pi_{\mathcal{Z}^*}(\bar{z}_k) }^2 - \alpha^2 \En{ z_k - w_k }^2. 
        \label{eq:bound-left-w} 
    \end{equation}

    Combining~\cref{eq:bound-left-z,eq:bound-left-w} with a weight coefficient $\theta$, we obtain 
    \begin{align}
        \En{ \bar{z}_k - \Pi_{\mathcal{Z}^*}(\bar{z}_k) }^2 
        &= \theta \En{ \bar{z}_k - \Pi_{\mathcal{Z}^*}(\bar{z}_k) }^2 + (1 - \theta) \En{ \bar{z}_k - \Pi_{\mathcal{Z}^*}(\bar{z}_k) }^2 \nonumber \\ 
        &\geq \frac{1}{2} \Phi^\theta_{\Pi_{\mathcal{Z}^*}(\bar{z}_k)}(z_k, w_k) - \left(\theta(1 - \alpha)^2 + (1-\theta)\alpha^2\right) \En{ z_k - w_k }^2 \nonumber \\ 
        &\geq \frac{1}{2} \distt^\theta_{\cZ^*}(z_k, w_k) - \left(\theta(1 - \alpha)^2 + (1-\theta)\alpha^2\right) \En{ z_k - w_k }^2, 
        \label{eq:left} 
    \end{align} 
    where the last inequality follows from the definition of $\distt^\theta_{\cZ^*}(z_k, w_k)$. 

    By applying \cref{lemma:eb-1}, it follows that 
    \begin{equation}
        \En{ \bar{z}_k - \Pi_{\mathcal{Z}^*}(\bar{z}_k) }^2 
        \leq C^2 \En{ \bar{z}_k - \tilde{z}_{k+1/2} }^2. 
    \end{equation}
    Combining this with \cref{eq:right,eq:left}, and using $C = \frac{\bar{C}}{\tau}$, we obtain that 
    \begin{align}
        &\frac{1}{2} \distt^\theta_{\cZ^*}(z_k, w_k) - \left( \theta(1 - \alpha)^2 + (1-\theta)\alpha^2 \right) \En{ z_k - w_k }^2 \nonumber \\ 
        \leq& \frac{\bar{C}^2}{\tau^2} \left( 2 \alpha \En{ z_k - z_{k+1/2} }^2 + 2 (1 - \alpha) \En{ w_k - z_{k+1/2} }^2 + 2\alpha\left(\alpha \tau^2 L^2 - (1 - \alpha)\right) \En{ z_k - w_k }^2 \right). 
        \label{eq:combined-error-bounds} 
    \end{align}

    Since $\tau \leq \frac{\sqrt{1 - \alpha}}{L}$, we have $\alpha \tau^2 L^2 - (1 - \alpha) \leq 0$ and therefore
    \begin{align}
        &\frac{1}{2} \distt^\theta_{\cZ^*}(z_k, w_k) - \left( \theta(1 - \alpha)^2 + (1-\theta)\alpha^2 \right) \En{ z_k - w_k }^2 \nonumber \\ 
        \leq& \frac{2\bar{C}^2}{\tau^2} \left( \alpha \En{ z_k - z_{k+1/2} }^2 + (1 - \alpha) \En{ w_k - z_{k+1/2} }^2 \right). 
        \label{} 
    \end{align} 

    Re-arranging terms and using $\theta(1 - \alpha)^2 + (1 - \theta) \alpha^2 \leq 2$, we have  
    \begin{equation}
        \frac{1}{2} \distt^\theta_{\cZ^*}(z_k, w_k) \leq \frac{2\bar{C}^2}{\tau^2} \left( \alpha \En{ z_k - z_{k+1/2} }^2 + (1 - \alpha) \En{ w_k - z_{k+1/2} }^2 \right) + 2 \En{ z_k - w_k }^2. 
        \label{} 
    \end{equation} 

    Then, we bound the last term in the above inequality as follows: 
    \begin{align}
        2 \En{ z_k - w_k }^2 & \leq 4 \En{ z_k - z_{k+1/2} }^2 + 4 \En{ w_k - z_{k+1/2} }^2 \tag{$\En{a + b}^2 \le 2 \En{a}^2 + 2 \En{b}^2$} \nonumber \\ 
        & = \frac{4}{\tau^2} \left( \tau^2 \En{ z_k - z_{k+1/2} }^2 + \tau^2 \En{ w_k - z_{k+1/2} }^2 \right) \nonumber \\ 
        & \leq \frac{4}{\tau^2} \left( \frac{1 - \alpha}{L^2} \En{ z_k - z_{k+1/2} }^2 + \frac{1 - \alpha}{L^2} \En{ w_k - z_{k+1/2} }^2 \right) \tag{$\tau \leq \frac{\sqrt{1 - \alpha}}{L}$} \nonumber \\ 
        & \leq \frac{4}{\tau^2 L^2} \left( \En{ z_k - z_{k+1/2} }^2 + (1 - \alpha) \En{ w_k - z_{k+1/2} }^2 \right) \nonumber \\ 
        & \leq \frac{4}{\tau^2 \alpha L^2} \left( \alpha \En{ z_k - z_{k+1/2} }^2 + (1 - \alpha) \En{ w_k - z_{k+1/2} }^2 \right). 
    \end{align}

    This yields that 
    \begin{equation}
        \frac{1}{2} \distt^\theta_{\cZ^*}(z_k, w_k) \leq \left( \frac{2\bar{C}^2}{\tau^2} + \frac{4}{\alpha L^2 \tau^2} \right) \left( \alpha \En{ z_k - z_{k+1/2} }^2 + (1 - \alpha) \En{ w_k - z_{k+1/2} }^2 \right). 
        \label{} 
    \end{equation}

    Equivalently, 
    \begin{equation}
        \tau^2 \frac{\alpha L^2}{ 4 \alpha \bar{C}^2 L^2 + 8} \distt^\theta_{\cZ^*}(z_k, w_k) \leq \alpha \En{ z_k - z_{k+1/2} }^2 + (1 - \alpha) \En{ w_k - z_{k+1/2}}^2. 
    \end{equation} 
    Since $\frac{\gamma\sqrt{1 - \alpha}}{L} \leq \frac{\sqrt{1 - \alpha}}{L}$,~\cref{eq:eb-in-algo} holds. 
\end{proof}

\subsection{Loopless SVRG-EG Proofs}
\label{subsec:app-linear-convergence-for-loopless}

\subsection*{Proof of \cref{thm:linear-convergence-for-loopless}}

\begin{proof}
    First, to make our work self-contained, we derive here a crucial inequality, 
    which is similar to \citet[Equation (10)]{alacaoglu2022stochastic}. 

    For brevity, let $\hat{F}(z_{k + 1/2}) = F(w) + F_{\xi_k}(z_{k+1/2}) - F_{\xi_k}(w)$. 
    By the property of the proximal mapping and the definitions of $z_{k+1}$ and $z_{k + 1/2}$, 
    we have, for all $z$, 
    \begin{equation}
        \inp{z_{k+1} - \bar{z}_k + \tau \hat{F}(z_{k + 1/2})}{z - z_{k+1}} \geq 0, 
        \label{eq:proj-ineq-1} 
    \end{equation}
    \begin{equation}
        \inp{z_{k+1/2} - \bar{z}_k + \tau F(w)}{z_{k+1} - z_{k+1/2}} \geq 0 
    \end{equation}
    for any $k = 0, 1, \ldots$. 
    We sum up these two inequalities, 
    and rearrange terms to obtain 
    \begin{align} 
        & \langle z_{k+1} - \bar{z}_k,\, z - z_{k+1} \rangle + \langle z_{k+1/2} - \bar{z}_k,\, z_{k+1} - z_{k+1/2} \rangle \nonumber \\ 
        & \hspace{60pt} + \tau \langle F_{\xi_k}(w_k) - F_{\xi_k}(z_{k + 1/2}),\, z_{k+1} - z_{k+1/2} \rangle + \tau \langle \hat{F}(z_{k + 1/2}),\, z - z_{k+1/2} \rangle
        \geq 0.
        \label{eq:sum-of-two-inequalities}
    \end{align}
    
    To bound the inner products we use the definition of $\bar{z}^s_k$ and the identity $2 \langle a,\, b \rangle = {\En{ a + b }}^2 - {\En{ a }}^2 - {\En{ b }}^2$: 
    \begin{align}
        2 \inp{z_{k+1} - \bar{z}_k}{z - z_{k+1}} = \alpha {\En{ z_k - z }}^2 & - {\En{ z_{k+1} - z }}^2 + (1 - \alpha) {\En{ w_k - z }}^2 \nonumber \\ 
        & - \alpha {\En{ z_{k+1} - z_k }}^2 - (1 - \alpha) {\En{ z_{k+1} - w_k }}^2, 
        \label{eq:inner-product-1}
    \end{align}
    \begin{align}
        2 \langle z_{k+1/2} - \bar{z}_k ,\, z_{k+1} - z_{k+1/2} \rangle &= \alpha {\En{ z_{k+1} - z_k }}^2 - {\En{ z_{k+1} - z_{k+1/2} }}^2 + (1 - \alpha) {\En{ z_{k+1} - w_k }}^2 \nonumber \\ 
        & \hspace{20pt} - \alpha {\En{ z_{k+1/2} - z_k }}^2 - (1 - \alpha) {\En{ z_{k+1/2} - w_k }}^2.
        \label{eq:inner-product-2}
    \end{align}
    
    Next, we bound the last two terms in~\cref{eq:sum-of-two-inequalities}. 
    \begin{align}
        & \mathbb{E}_k[2\tau \langle F_{\xi_k}(w_k) - F_{\xi_k}(z_{k + 1/2}),\, z_{k+1} - z_{k+1/2} \rangle] \nonumber \\ 
        \leq & \mathbb{E}_k[2\tau \En{ F_{\xi_k}(w_k) - F_{\xi_k}(z_{k + 1/2}) } \cdot \En{ z_{k+1} - z_{k+1/2} }] \tag{Cauchy–Schwarz inequality} \nonumber \\
        \leq & \mathbb{E}_k\left[ \frac{\tau^2}{\gamma} {\En{ F_{\xi_k}(w_k) - F_{\xi_k}(z_{k + 1/2}) }}^2 + \gamma {\En{ z_{k+1} - z_{k+1/2} }}^2 \right] \tag{$2ab \leq a^2 + b^2$} \nonumber \\ 
        = & \frac{\tau^2}{\gamma} \mathbb{E}_k {\En{ F_{\xi_k}(w_k) - F_{\xi_k}(z_{k + 1/2}) }}^2 + \gamma \mathbb{E}_k {\En{ z_{k+1} - z_{k+1/2} }}^2 \nonumber \\ 
        \leq & \frac{\tau^2 L^2}{\gamma} \En{ w_k - z_{k + 1/2} }^2 + \gamma \mathbb{E}_k {\En{ z_{k+1} - z_{k+1/2} }}^2, 
        \label{eq:bound-by-splited-two-norms}
    \end{align} 
    where the last inequality follows from the $L$-Lipschitz continuity of $F_\xi$. 

    Let $z = z^* \in \mathcal{Z}^*$. 
    By the monotonicity of $F$ and the definition of \eqref{prob:vi}, we have 
    \begin{align}
        \mathbb{E}_k\left[\langle \hat{F}(z_{k + 1/2}), \, z^* - z_{k + 1/2} \rangle\right] = - \langle F(z_{k + 1/2}), \, z_{k + 1/2} - z^* \rangle \leq - \inp{ F(z^*) }{ z_{k + 1/2} - z^* } \leq 0. 
        \label{eq:VI-inequality}
    \end{align}


    Therefore, combining~\cref{eq:sum-of-two-inequalities,eq:inner-product-1,eq:inner-product-2,eq:bound-by-splited-two-norms,eq:VI-inequality}, we obtain 
    \begin{align}
        \bbE_k[ \En{ z_{k + 1} - z^* }^2 ] \leq & \alpha {\En{ z_k - z^* }}^2 + (1 - \alpha) {\En{ w_k - z^* }}^2 \nonumber \\ 
        & - \left( (1 - \alpha) - \frac{\tau^2 L^2}{\gamma} \right) {\En{ w_k - z_{k + 1/2} }}^2 - \alpha \En{ z_k - z_{k+1/2} }^2. 
        \label{eq:decrease-loopless}
    \end{align} 

    By the definition of $w_{k+1/2}$, 
    \begin{equation}
        \mathbb{E}_{k+1/2}\En{ w_{k+1} - z }^2 = p \En{ z_{k+1} - z }^2 + (1 - p) \En{ w_k - z }^2 \quad \forall\, z \in \cZ. 
    \end{equation} 
    By the tower property $\mathbb{E}_k[\mathbb{E}_{k+1/2}[\cdot]] = \mathbb{E}_k[\cdot]$ and letting $z = z^*$, 
    it follows that 
    \begin{equation}
        \bbE_k\En{ w_{k+1} - z^* }^2 = p \bbE_k \En{ z_{k+1} - z^* }^2 + (1 - p) \En{ w_k - z^* }^2 
    \end{equation} 
    and hence 
    \begin{equation}
        \frac{1 - \alpha}{p} \bbE_k\En{ w_{k+1} - z^* }^2 = (1 - \alpha) \bbE_k \En{ z_{k+1} - z^* }^2 + \frac{(1 - \alpha)(1 - p)}{p} \En{ w_k - z^* }^2. 
        \label{eq:def-w-kplus1} 
    \end{equation}     
    We plug~\cref{eq:def-w-kplus1} into~\cref{eq:decrease-loopless} and then attain that 
    \begin{align}
        & \bbE_k \left( \alpha {\En{ z_{k + 1} - z^* }}^2 + \frac{1 - \alpha}{p} {\En{ w_{k + 1} - z^* }}^2 \right) \nonumber \\ 
        \leq& \alpha {\En{ z_k - z^* }}^2 + \frac{1 - \alpha}{p} {\En{ w_k - z^* }}^2 - \left( (1 - \alpha) - \frac{\tau^2 L^2}{\gamma} \right) \En{ w_k - z_{k + 1/2} }^2 - \alpha \En{ z_k - z_{k+1/2} }^2. 
        \label{eq:decrease-loopless-lyapunov-func}
    \end{align} 

    Since $\tau \leq \frac{\gamma\sqrt{1 - \alpha}}{L}$, we can conclude that 
    \begin{align}
        & \bbE_k \left( \alpha {\En{ z_{k + 1} - z^* }}^2 + \frac{1 - \alpha}{p} {\En{ w_{k + 1} - z^* }}^2 \right) \nonumber \\ 
        \leq& \alpha {\En{ z_k - z^* }}^2 + \frac{1 - \alpha}{p} {\En{ w_k - z^* }}^2 - (1-\gamma)(1-\alpha) \En{ w_k - z_{k + 1/2} }^2 - \alpha \En{ z_k - z_{k+1/2} }^2. 
        \label{eq:decrease-loopless-lyapunov-func-fixed-stepsize} 
    \end{align} 

    Let $\tilde{\theta} = \frac{\alpha}{\alpha + \frac{1-\alpha}{p}}$. Then, 
    \begin{equation}
        \left( \alpha + \frac{1 - \alpha}{p} \right) \Phi^{\tilde{\theta}}_{z^*}(z_k, w_k) = \alpha {\En{ z_k - z^* }}^2 + \frac{1 - \alpha}{p} {\En{ w_k - z^* }}^2. 
        \label{eq:lyapunov-in-alg}
    \end{equation}

    Using the notation of~\cref{eq:lyapunov-in-alg} and $\gamma \in (0, 1)$, we can transform~\cref{eq:decrease-loopless-lyapunov-func-fixed-stepsize} into 
    \begin{align} 
        \left( \alpha + \frac{1 - \alpha}{p} \right) \bbE_k \Phi^{\tilde{\theta}}_{z^*}(z_{k+1}, w_{k+1}) \leq & \left( \alpha + \frac{1 - \alpha}{p} \right) \Phi^{\tilde{\theta}}_{z^*}(z_k, w_k) \nonumber \\ 
        & - (1-\gamma) \left( \alpha \En{ z_k - z_{k+1/2} }^2 + (1-\alpha) \En{ w_k - z_{k + 1/2} }^2 \right). 
        \label{eq:decrease-loopless-lyapunov-func-fixed-stepsize-concise} 
    \end{align} 

    Applying~\cref{lemma:eb-in-alg} with $\theta = \tilde{\theta}$ in~\cref{eq:decrease-loopless-lyapunov-func-fixed-stepsize-concise}, we have 
    \begin{align} 
        & \left( \alpha + \frac{1 - \alpha}{p} \right) \bbE_k \Phi^{\tilde{\theta}}_{z^*}(z_{k+1}, w_{k+1}) \nonumber \\ 
        \leq & \left( \alpha + \frac{1 - \alpha}{p} \right) \Phi^{\tilde{\theta}}_{z^*}(z_k, w_k) - (1-\gamma) \left( \alpha \En{ z_k - z_{k+1/2} }^2 + (1-\alpha) \En{ w_k - z_{k + 1/2} }^2 \right) \nonumber \\ 
        \leq & \left( \alpha + \frac{1 - \alpha}{p} \right) \Phi^{\tilde{\theta}}_{z^*}(z_k, w_k) -         \tau^2 \frac{(1 - \gamma)\alpha L^2}{ 4\alpha\bar{C}^2 L^2 + 8 } \distt^{\tilde{\theta}}_{\cZ^*}(z_k, w_k). 
    \end{align} 

    Let $z^* = \Pi^{\tilde{\theta}}_\cZ(z_k, w_k)$, then $\Phi^{\tilde{\theta}}_{z^*}(z_k, w_k) = \distt^\theta_{\cZ^*}(z_k, w_k)$ and thus 
    \begin{align} 
        \bbE_k \Phi^{\tilde{\theta}}_{z^*}(z_{k+1}, w_{k+1}) 
        \leq & \left( 1 - \tau^2 \frac{(1 - \gamma)\alpha L^2}{ \left( 4\alpha\bar{C}^2 L^2 + 8 \right) \left( \alpha + \frac{1- \alpha}{p} \right) } \right) \distt^{\tilde{\theta}}_{\cZ^*}(z_k, w_k). 
    \end{align} 

    By the definition of $\distt^{\tilde{\theta}}_{\cZ^*}(z_{k+1}, w_{k+1})$, we complete the proof by 
    \begin{align} 
        \bbE_k \distt^{\tilde{\theta}}_{\cZ^*}(z_{k+1}, w_{k+1}) \leq & 
        \bbE_k \Phi^{\tilde{\theta}}_{z^*}(z_{k+1}, w_{k+1}) \nonumber \\ 
        \leq & \left( 1 - \tau^2 \frac{(1 - \gamma)\alpha L^2}{ \left( 4\alpha\bar{C}^2 L^2 + 8 \right) \left( \alpha + \frac{1- \alpha}{p} \right) } \right) \distt^{\tilde{\theta}}_{\cZ^*}(z_k, w_k). 
    \end{align} 

\end{proof}

\subsection*{Proof of \cref{crl:linear-convergence-for-loopless}}

\begin{proof} 
    Let 
    \begin{equation}
        \rho = \tau^2 \frac{(1 - \gamma)\alpha L^2}{ \left( 4\alpha\bar{C}^2 L^2 + 8 \right) \left( \alpha + \frac{1- \alpha}{p} \right) }. \nonumber 
    \end{equation} 
    By iterating~\cref{thm:linear-convergence-for-loopless} and taking the total expectation $\bbE_0$, we have 
    \begin{equation}
        \bbE_0\left[ \distt^{\tilde{\theta}}_{\cZ^*}(z_k, w_k) \right] \leq (1 - \rho)^k \distt^{\tilde{\theta}}_{\cZ^*}(z_0, w_0) = (1 - \rho)^k \min_{z^* \in \cZ^*} \En{z_0 - z^*}^2. 
    \end{equation} 
    To reach expected $\epsilon$-accuracy as measured by the weighted distance (Lyapunov function) to the solution set, 
    we need $(1 - \rho)^k \leq \epsilon$ and equivalently, 
    \begin{equation}
        k \geq \frac{1}{\log{\frac{1}{1 - \rho}}} \log{\frac{1}{\epsilon}} \approx \frac{1}{\rho} \log{\frac{1}{\epsilon}} \quad \text{when $\rho$ is small}, 
    \end{equation} 
    where $k$ counts the number of iterations. 

    Recall that $1 - \alpha = p = \frac{2}{N}$, $\gamma = 0.99$ and $\tau = \frac{0.99\sqrt{2}}{\sqrt{N}L}$. 
    Since 
    \begin{align} 
        \frac{1}{\rho} = \frac{\left( 4\alpha\bar{C}^2 L^2 + 8 \right) \left( \alpha + \frac{1- \alpha}{p} \right)}{ (1 - \gamma) \tau^2 \alpha L^2} = \frac{\left( 4(1 - \frac{2}{N}) \bar{C}^2 L^2 + 8 \right) \left( 2 - \frac{2}{N} \right)}{ 0.01 \times \frac{0.99^2 \times 2}{N L^2} \times (1 - \frac{2}{N}) L^2} = \cO(\bar{C}^2 N L^2 + N). 
    \end{align} 
    Since we need $2 + p N$ evaluations of $F_\xi$ per iteration, we need $4 \times k$ evaluations of $F_\xi$ to reach expected $\epsilon$-accuracy. 
    Therefore, we attain that the time complexity is $\mathcal{O}\left( (\bar{C}^2 N L^2 + N) \log{\frac{1}{\epsilon}} \right)$. 
\end{proof}

\subsection{Double-loop SVRG-EG}
\label{subsec:app-linear-convergence-for-double-loop}

Here, we formally introduce the double-loop version of the SVRG-EG algorithm: 

\begin{algorithm}
  \KwIn{Set the inner loop iteration $K$, probability distribution $Q$, step size $\tau$, $\alpha \in (0, 1)$, $z^0_0 = w^0 = z_0$}
  \For{$s = 0, 1, \cdots$}{
    \For{$k = 0, 1, \cdots, K-1$}{
      $\bar{z}^s_k = \alpha z^s_k + (1 - \alpha) w^s$\;
      $z^s_{k + 1/2} = \prox_{\tau g}(\bar{z}^s_k - \tau F(w^s))$\; 
      Draw an index $\xi^s_k$ according to $Q$\;
      $z^s_{k+1} = \prox_{\tau g}\left( \bar{z}^s_k - \tau [F(w^s) + F_{\xi^s_k}(z^s_{k+1/2}) - F_{\xi^s_k}(w^s)] \right)$\;
    }
    $w^s = \frac{1}{K} \sum_{k=1}^{K} z^s_k$\;
    $z^{s+1}_0 = z^s_k$\;
  }
  \caption{Double-loop SVRG-Extragradient (D-SVRG-EG)}
  \label{alg:eg-svrg-double-loop}
\end{algorithm}

In the double-loop SVRG-Extragradient algorithm, the reference points are updated once per outer iteration 
instead of the stochastic updates in the (loopless) SVRG-EG.
Also, the reference points are computed based on averaging over all generated points in the previous inner loop, 
instead of the last iterate in (loopless) SVRG-EG.

For the double-loop SVRG-EG algorithm, we prove the following linear convergence guarantee and corresponding time complexity.

Analogously, we define a Lyapunov function 
\begin{equation}
    \distt^\theta_{\cZ^*}(z^s_0, w^s) = \min_{z^* \in \cZ^*} \theta \En{z^s_0 - z^*}^2 + (1 - \theta) \En{w^s - z^*}^2. 
\end{equation}

\begin{theorem} 
    Assume Assumptions $1$-$4$ and \textnormal{Error-Bound Condition} hold. 
    Let $\{ z^s_k, w^s \}_{s \in \bbN_+, k = 0,\ldots,K-1}$ be iterates generated by~\cref{alg:eg-svrg-double-loop} with $K \in \bbN_+$, $\alpha \in [0, 1), \tau = \gamma \frac{\sqrt{1-\alpha}}{L}$, for $\gamma \in (0, 1)$. 
    Then, we have 
    \begin{equation}
        \mathbb{E}^s_0\left[ \textnormal{dist}^\theta_{\mathcal{Z}^*}(z^{s+1}_0, w^{s+1}) \right] \le \left( 1 - \frac{(1 - \gamma)K}{16\tilde{C}^2(\alpha + K(1 - \alpha))} \right)^s \textnormal{dist}^\theta_{\mathcal{Z}^*}(z^s_0, w^s), 
        \label{eq:linear-convergence-for-double-loop}
    \end{equation}
    where $\theta = \frac{\alpha}{\alpha + K(1-\alpha)}$ and $\tilde{C} = \max\left\{ \frac{\bar{C}}{\tau}, \sqrt{\frac{4K + 2}{\alpha(1 - \alpha)}} \right\}$. 
    \label{thm:linear-convergence-for-double-loop}
\end{theorem}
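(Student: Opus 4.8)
The plan is to replay the proof of \cref{thm:linear-convergence-for-loopless} one inner loop at a time, with the stochastic snapshot refresh replaced by the deterministic once-per-outer-loop refresh $w^{s+1}=\frac1K\sum_{k=1}^{K}z^s_k$ and $z^{s+1}_0=z^s_K$. Fix the outer index $s$ and let $\mathbb{E}^s_0$ denote expectation conditioned on the state at the start of outer loop $s$. Since $w^s$ is frozen throughout the inner loop, the purely algebraic derivation behind \citet[Eq. (10)]{alacaoglu2022stochastic} carries over verbatim with the substitution $w_k\mapsto w^s$; keeping the $\En{z^s_k-z^s_{k+1/2}}^2$ and $\En{w^s-z^s_{k+1/2}}^2$ terms, using $\tau=\gamma\sqrt{1-\alpha}/L$ to absorb the $\frac{\tau^2L^2}{\gamma}$ coefficient, and discarding the nonnegative $\En{z^s_{k+1}-z^s_{k+1/2}}^2$ contribution, this gives, for every $z^*\in\cZ^*$ and $k=0,\dots,K-1$,
\begin{equation*}
\mathbb{E}^s_0\En{z^s_{k+1}-z^*}^2 \le \alpha\,\mathbb{E}^s_0\En{z^s_k-z^*}^2 + (1-\alpha)\En{w^s-z^*}^2 - (1-\gamma)\,\mathbb{E}^s_0\big(\alpha\En{z^s_k-z^s_{k+1/2}}^2 + (1-\alpha)\En{w^s-z^s_{k+1/2}}^2\big).
\end{equation*}

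Next I would establish a double-loop analogue of \cref{lemma:eb-in-alg}: for each inner step $k$, introduce the quasi-iterate $\tilde z^s_{k+1/2}=\prox_{\tau g}(\bar z^s_k-\tau F(\bar z^s_k))$ with $\bar z^s_k=\alpha z^s_k+(1-\alpha)w^s$, and repeat the chain of estimates from the proof of \cref{lemma:eb-in-alg} --- nonexpansiveness of $\prox_{\tau g}$ and $L$-Lipschitzness of $F$ to bound $\En{z^s_{k+1/2}-\tilde z^s_{k+1/2}}$, the identity $\En{\alpha a+(1-\alpha)b}^2=\alpha\En{a}^2+(1-\alpha)\En{b}^2-\alpha(1-\alpha)\En{a-b}^2$, the two ``split'' lower bounds for $\En{\bar z^s_k-\Pi_{\cZ^*}(\bar z^s_k)}^2$ combined with weight $\theta$, and \cref{lemma:eb-1} applied at $\bar z^s_k$. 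With $\theta=\frac{\alpha}{\alpha+K(1-\alpha)}$ this yields
\begin{equation*}
\distt^\theta_{\cZ^*}(z^s_k,w^s) \le 16\,\tilde C^2\big(\alpha\En{z^s_k-z^s_{k+1/2}}^2 + (1-\alpha)\En{w^s-z^s_{k+1/2}}^2\big),
\end{equation*}
where $\tilde C=\max\{\bar C/\tau,\sqrt{(4K+2)/(\alpha(1-\alpha))}\}$ comes from dominating both $\bar C^2/\tau^2$ (via \cref{lemma:eb-1}) and the cross-term coefficient $\frac{\theta(1-\alpha)^2+(1-\theta)\alpha^2}{\alpha(1-\alpha)}$, which is $O(K)$ for this $\theta$, by $\tilde C^2$.

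Substituting the second display into the first and summing over $k=0,\dots,K-1$, the telescoping identity $\sum_{k=0}^{K-1}\mathbb{E}^s_0\En{z^s_{k+1}-z^*}^2-\alpha\sum_{k=0}^{K-1}\mathbb{E}^s_0\En{z^s_k-z^*}^2=\mathbb{E}^s_0\big(\alpha\En{z^s_K-z^*}^2+(1-\alpha)\sum_{k=1}^{K}\En{z^s_k-z^*}^2\big)-\alpha\En{z^s_0-z^*}^2$, together with $z^{s+1}_0=z^s_K$ and the convexity bound $\En{w^{s+1}-z^*}^2\le\frac1K\sum_{k=1}^K\En{z^s_k-z^*}^2$, turns the left side into $\ge(\alpha+K(1-\alpha))\,\mathbb{E}^s_0\Phi^\theta_{z^*}(z^{s+1}_0,w^{s+1})$ and the surviving positive terms into $(\alpha+K(1-\alpha))\,\Phi^\theta_{z^*}(z^s_0,w^s)$, leaving a negative term $-\frac{1-\gamma}{16\tilde C^2}\sum_{k=0}^{K-1}\mathbb{E}^s_0\distt^\theta_{\cZ^*}(z^s_k,w^s)$. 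I would then lower-bound $\sum_{k=0}^{K-1}\distt^\theta_{\cZ^*}(z^s_k,w^s)$ by a constant multiple of $K\,\distt^\theta_{\cZ^*}(z^s_0,w^s)$: one inner loop is short relative to the contraction time scale, because $\tilde C^2\ge 4K+2$ forces the total decay of the weighted distance over $K$ steps to be only a small fraction of $\distt^\theta_{\cZ^*}(z^s_0,w^s)$, so each $\distt^\theta_{\cZ^*}(z^s_k,w^s)$ with $k\le K$ stays within a constant factor of its initial value --- here it is essential that $w^s$ is frozen and that $\theta$ is chosen so that the $w^s$-component dominates the weighted distance. Choosing $z^*=\Pi^\theta_{\cZ^*}(z^s_0,w^s)$ so that $\Phi^\theta_{z^*}(z^s_0,w^s)=\distt^\theta_{\cZ^*}(z^s_0,w^s)$ and $\mathbb{E}^s_0\distt^\theta_{\cZ^*}(z^{s+1}_0,w^{s+1})\le\mathbb{E}^s_0\Phi^\theta_{z^*}(z^{s+1}_0,w^{s+1})$, dividing by $\alpha+K(1-\alpha)$, and iterating over $s$ then gives \eqref{eq:linear-convergence-for-double-loop}.

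The main obstacle is precisely this factor-$K$ extraction: making rigorous that over a single inner loop the weighted distance to $\cZ^*$ cannot drop by more than a constant factor, so that the $K$ accumulated terms in the sum genuinely contribute $\Omega(K)\,\distt^\theta_{\cZ^*}(z^s_0,w^s)$; this relies on the short-horizon estimate $\tilde C^2\gtrsim K$ and on the frozen snapshot, and is where the double-loop proof genuinely departs from the loopless one. A secondary, purely bookkeeping difficulty is carrying the constants cleanly through the double-loop version of \cref{lemma:eb-in-alg} so as to recover exactly the stated $\tilde C$.
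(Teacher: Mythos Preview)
Your overall skeleton --- per–inner-step descent, sum over $k$, Jensen on $w^{s+1}=\frac1K\sum z^s_k$, then choose $z^*=\Pi^\theta_{\cZ^*}(z^s_0,w^s)$ --- matches the paper. The genuine gap is exactly the step you flag as the ``main obstacle'': your per-iterate error bound gives $\distt^\theta_{\cZ^*}(z^s_k,w^s)$, and you then need $\sum_{k=0}^{K-1}\distt^\theta_{\cZ^*}(z^s_k,w^s)\ge cK\,\distt^\theta_{\cZ^*}(z^s_0,w^s)$. Your proposed justification (``the weighted distance cannot drop much over $K$ steps because $\tilde C^2\gtrsim K$ makes the contraction slow'') is circular --- it presupposes the contraction you are proving --- and the backup idea that the frozen $w^s$-component dominates does not give a uniform constant: if $w^s$ happens to be near $\cZ^*$ while $z^s_0$ is not, then $(1-\theta)\distt(w^s,\cZ^*)^2$ can be arbitrarily small compared to $\distt^\theta_{\cZ^*}(z^s_0,w^s)$. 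Also, your explanation of where the second branch of $\tilde C$ comes from is off: $\theta(1-\alpha)^2+(1-\theta)\alpha^2\le 1$ for any $\theta,\alpha\in[0,1]$, so it is not $O(K)$.

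The paper sidesteps the factor-$K$ extraction entirely by changing the error-bound lemma, not by a short-horizon argument. In the double-loop lemma, at each inner step $k$ they lower-bound $\En{\bar z^s_k-\Pi_{\cZ^*}(\bar z^s_k)}^2$ in terms of $\En{z^s_0-\Pi_{\cZ^*}(z^s_0)}^2$ and $\En{w^s-\Pi_{\cZ^*}(w^s)}^2$ --- routing through $z^s_0$ rather than $z^s_k$ via $\En{\bar z^s_k-z^s_0}^2\le 2\alpha^2\En{z^s_k-w^s}^2+2\En{z^s_0-w^s}^2$. Summing over $k$ then produces $\frac{K}{8\tilde C^2}\distt^\theta_{\cZ^*}(z^s_0,w^s)$ directly, plus an error term $e^s\asymp \tilde C^{-2}\big(\sum_k\En{z^s_k-w^s}^2+K\En{z^s_0-w^s}^2\big)$. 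In the main proof they split the accumulated residual $\sum_k\big(\alpha\En{z^s_k-z^s_{k+1/2}}^2+(1-\alpha)(1-\gamma)\En{w^s-z^s_{k+1/2}}^2\big)$ in half: one half feeds the error bound, the other half absorbs $e^s$ after bounding $\En{z^s_k-w^s}^2\le 2\En{z^s_k-z^s_{k+1/2}}^2+2\En{w^s-z^s_{k+1/2}}^2$. The condition $\tilde C^2\ge\frac{4K+2}{\alpha(1-\alpha)}$ is exactly what makes this absorption work, and that is the true origin of the second branch in the definition of $\tilde C$.
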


\begin{corollary}
    Let $K=\frac{N}{2}$, $\alpha = 1 - \frac{2}{N}$ and $\tau = \frac{0.99\sqrt{2}}{\sqrt{N}L}$ in \cref{alg:eg-svrg-double-loop}. 
    Then, the time complexity to reach $\epsilon$-accuracy is $\mathcal{O}\left( \left( \bar{C}^2 N L^2 + N^2 \right) \log{\frac{1}{\epsilon}} \right)$. 
    \label{crl:linear-convergence-for-double-loop}
\end{corollary}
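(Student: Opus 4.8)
}
The plan is to mirror the argument behind \cref{thm:linear-convergence-for-loopless}, with the random (loopless) snapshot update replaced by two structural facts of \cref{alg:eg-svrg-double-loop}: $w^s$ is held fixed throughout an inner loop of length $K$, and is then reset to the inner-loop average while $z^{s+1}_0 = z^s_K$. First, for each inner step $k \in \{0,\dots,K-1\}$ of outer iteration $s$ I would derive the one-step descent inequality analogous to \citet[Eq.~(10)]{alacaoglu2022stochastic}, keeping the $\En{z^s_k - z^s_{k+1/2}}^2$ term: for any $z^* \in \cZ^*$,
\begin{align*}
  \bbE_k\En{ z^s_{k+1} - z^* }^2 \leq{}& \alpha\En{ z^s_k - z^* }^2 + (1-\alpha)\En{ w^s - z^* }^2 \\
  &- (1-\gamma)(1-\alpha)\En{ w^s - z^s_{k+1/2} }^2 - \alpha\En{ z^s_k - z^s_{k+1/2} }^2 ,
\end{align*}
which follows from the two projection inequalities defining $z^s_{k+1/2}$ and $z^s_{k+1}$, monotonicity of $F$ together with the variational inequality to make $\bbE_k\inp{\hat F(z^s_{k+1/2})}{z^* - z^s_{k+1/2}}$ non-positive, the Lipschitz-in-mean bound to control the variance-reduction cross term, and $\tau \le \gamma\sqrt{1-\alpha}/L$.

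Since $w^s$ does not change over $k=0,\dots,K-1$, summing this inequality over the inner loop and taking total expectation telescopes the $z^s_k$-chain and leaves $K$ copies of $(1-\alpha)\En{w^s-z^*}^2$:
\begin{align*}
  \bbE\En{ z^s_K - z^* }^2 + (1-\alpha)\sum_{k=1}^{K-1} \bbE\En{ z^s_k - z^* }^2 \leq{}& \alpha\En{ z^s_0 - z^* }^2 + K(1-\alpha)\En{ w^s - z^* }^2 \\
  &- (1-\gamma)\sum_{k=0}^{K-1}\Big( \alpha\En{ z^s_k - z^s_{k+1/2} }^2 + (1-\alpha)\En{ w^s - z^s_{k+1/2} }^2 \Big).
\end{align*}
Now use $z^{s+1}_0 = z^s_K$ and $w^{s+1} = \tfrac1K\sum_{k=1}^K z^s_k$: by Jensen's inequality $K(1-\alpha)\En{w^{s+1}-z^*}^2 \le (1-\alpha)\sum_{k=1}^K\En{z^s_k-z^*}^2$, so $\alpha\En{z^{s+1}_0-z^*}^2 + K(1-\alpha)\En{w^{s+1}-z^*}^2 \le \En{z^s_K-z^*}^2 + (1-\alpha)\sum_{k=1}^{K-1}\En{z^s_k-z^*}^2$, which is exactly the left-hand side above. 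With $\theta = \frac{\alpha}{\alpha+K(1-\alpha)}$ the two sides become $(\alpha+K(1-\alpha))\,\Phi^\theta_{z^*}(z^{s+1}_0,w^{s+1})$ and $(\alpha+K(1-\alpha))\,\Phi^\theta_{z^*}(z^s_0,w^s)$, respectively.

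It then remains to lower bound the accumulated residual $\sum_{k=0}^{K-1}\big(\alpha\En{z^s_k-z^s_{k+1/2}}^2+(1-\alpha)\En{w^s-z^s_{k+1/2}}^2\big)$ by a multiple of $\distt^\theta_{\cZ^*}(z^s_0,w^s)$, i.e.\ to establish a double-loop analog of \cref{lemma:eb-in-alg}. As there, I would introduce the quasi-iterates $\tilde z^s_{k+1/2}=\Pi_\cZ(\bar z^s_k-\tau F(\bar z^s_k))$, bound $\En{\tilde z^s_{k+1/2}-z^s_{k+1/2}}\le \alpha\tau L\En{z^s_k-w^s}$ by non-expansiveness of $\Pi_\cZ$, apply \cref{lemma:eb-1} at $\bar z^s_k=\alpha z^s_k+(1-\alpha)w^s$, and split $\En{\bar z^s_k-\Pi_{\cZ^*}(\bar z^s_k)}^2$ with weight $\theta$ into $\tfrac12\Phi^\theta$-type terms minus $\En{z^s_k-w^s}^2$ terms, as in the proof of \cref{lemma:eb-in-alg}. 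The resulting estimate should take the form $\distt^\theta_{\cZ^*}(z^s_0,w^s)\lesssim \tfrac{\tilde C^2}{K}\sum_{k=0}^{K-1}\big(\alpha\En{z^s_k-z^s_{k+1/2}}^2+(1-\alpha)\En{w^s-z^s_{k+1/2}}^2\big)$ with $\tilde C=\max\{\bar C/\tau,\sqrt{(4K+2)/(\alpha(1-\alpha))}\}$; the $\tfrac1K$ and the second branch of $\tilde C$ are exactly what is needed to absorb the leftover $\En{z^s_k-w^s}^2$ terms while matching the weighting $1-\theta=\frac{K(1-\alpha)}{\alpha+K(1-\alpha)}$, which puts almost all the Lyapunov mass on $w^s$.

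Combining the two displays with this error-bound estimate and choosing $z^*=\Pi^\theta_{\cZ^*}(z^s_0,w^s)$ (so that $\Phi^\theta_{z^*}(z^s_0,w^s)=\distt^\theta_{\cZ^*}(z^s_0,w^s)$ and $\distt^\theta_{\cZ^*}(z^{s+1}_0,w^{s+1})\le\Phi^\theta_{z^*}(z^{s+1}_0,w^{s+1})$ by definition of the $\Phi^\theta$-projection) gives the one-outer-step contraction
\begin{equation*}
  \bbE^s_0\big[\distt^\theta_{\cZ^*}(z^{s+1}_0,w^{s+1})\big]\le \Big(1-\tfrac{(1-\gamma)K}{16\tilde C^2(\alpha+K(1-\alpha))}\Big)\distt^\theta_{\cZ^*}(z^s_0,w^s),
\end{equation*}
and iterating over $s$ yields~\eqref{eq:linear-convergence-for-double-loop}; \cref{crl:linear-convergence-for-double-loop} then follows by inserting $K=N/2$, $\alpha=1-2/N$, $\tau=0.99\sqrt2/(\sqrt N L)$ and counting $O(N)$ evaluations of $F_\xi$ per outer iteration. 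The main obstacle is the double-loop error-bound lemma: unlike the loopless case there is no random snapshot update to reweight $\En{w^s-z^*}^2$, so the $K$ copies of $(1-\alpha)\En{w^s-z^*}^2$ accumulated in the inner-loop sum must be balanced against residual terms spread over the whole inner loop (not just the $k=0$ term), which is what forces the $K$ in the numerator of the rate and the explicit $K$-dependence of $\tilde C$; and, as in the Remark after \cref{thm:linear-convergence-for-loopless}, one must carefully track that \cref{lemma:eb-1} certifies only the solution nearest to each centering point $\bar z^s_k$, not a single fixed $z^*$.
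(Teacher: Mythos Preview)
Your proposal is correct and follows essentially the same route as the paper: the one-step descent for the inner loop, summing over $k$ with Jensen to pass to $(z^{s+1}_0,w^{s+1})$, a double-loop analogue of \cref{lemma:eb-in-alg} at the points $\bar z^s_k$ with $\tilde C=\max\{\bar C/\tau,\sqrt{(4K+2)/(\alpha(1-\alpha))}\}$, and absorbing the leftover $\En{z^s_k-w^s}^2$ and $\En{z^s_0-w^s}^2$ terms using half of the accumulated residual (which is precisely the role of the second branch of $\tilde C$). The corollary is then exactly the parameter substitution and per-epoch $O(N)$ cost count you describe.
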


\paragraph*{Remarks.} We show that the double-loop SVRG-EG exhibits a linear convergence rate of the same magnitude (in terms of its dependence on $N$) as its loopless counterpart in the typical case where $\bar{C}^2 L^2 \gg 1$. 

\subsection*{Lemma of error-bound condition for double-loop SVRG-EG}

To prove the linear convergence of double-loop SVRG-EG, we need a ``new''~\cref{lemma:eb-in-alg}. We state this lemma as follows. 

\begin{lemma} 
    Assume Assumptions $1$-$4$ and \textnormal{Error-Bound Condition} hold. 
    Let $K \in \bbN_+$, $\alpha \in [0, 1), \tau = \gamma \frac{\sqrt{1-\alpha}}{L}$, for $\gamma \in (0, 1)$, and $\theta \in [0, 1]$. 
    Then, for $s \geq 1$, ~\cref{alg:eg-svrg-double-loop} ensures
    \begin{equation}
        \frac{K}{8\tilde{C}^2} \distt^\theta_{\cZ^*}(z^s_0, w^s) \leq \EE^s_0 \left[ \sum_{k=0}^{K-1} \left( \alpha \En{ z^s_{k+1/2} - z^s_k }^2 + (1 - \alpha) \En{ z^s_{k + 1/2} - w^s }^2 \right) \right] + e^s,
        \label{eq:eb-in-alg-d}
    \end{equation}
    where $\tilde{C} \geq \frac{\bar{C}^2}{\tau^2}$, 
    and 
    \begin{equation}
        e^s = \frac{1}{\tilde{C}^2} \sum_{k=0}^{K-1} \bbE^s_0 \En{ z^s_k - w^s }^2 + \frac{5K}{4\tilde{C}^2} \En{z^s_0 - w^s}^2. \nonumber 
    \end{equation}
    \label{lemma:eb-in-alg-d}
\end{lemma}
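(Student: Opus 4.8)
The plan is to reduce the claim to the loopless ``error-bound-in-algorithm'' estimate, Lemma~\ref{lemma:eb-in-alg}, applied \emph{locally} at each inner iterate, and then to move the base point of the weighted distance from the running inner iterate $z^s_k$ to the inner-loop start $z^s_0$. The point is that the proof of Lemma~\ref{lemma:eb-in-alg} never uses the loopless snapshot-update rule: it only uses that $\bar z^s_k = \alpha z^s_k + (1-\alpha) w^s$, that $z^s_{k+1/2}$ is the (proximal-)gradient step from $\bar z^s_k$ driven by $F(w^s)$, Assumptions~\ref{asp:1}--\ref{asp:4}, and the global error bound of Lemma~\ref{lemma:eb-1} applied to $\bar z^s_k$ through the auxiliary full-gradient half-step $\tilde z^s_{k+1/2} = \Pi_{\cZ}(\bar z^s_k - \tau F(\bar z^s_k))$. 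Since $w^s$ is frozen throughout the inner loop (and here the proximal step is $\Pi_{\cZ}$, as $g=\delta_{\cZ}$), Lemma~\ref{lemma:eb-in-alg} applies verbatim with $(z_k, w_k, z_{k+1/2})$ replaced by $(z^s_k, w^s, z^s_{k+1/2})$, and for each $k = 0,\dots, K-1$ (its stated restriction $k\ge 1$ is immaterial, as its proof does not use it) it yields
\begin{equation*}
\tau^2\,\tfrac{\alpha L^2}{6\bar{C}^2\alpha L^2+8}\,\distt^\theta_{\cZ^*}(z^s_k, w^s)\;\le\;\alpha\En{z^s_k-z^s_{k+1/2}}^2 + (1-\alpha)\En{w^s-z^s_{k+1/2}}^2 \;=:\; R^s_k .
\end{equation*}

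Next I would change the base point. Taking $z^\ast$ to be the minimizer realizing $\distt^\theta_{\cZ^*}(z^s_k, w^s)$, the estimates $\En{z^s_0-z^\ast}^2\le 2\En{z^s_k-z^\ast}^2 + 2\En{z^s_0-z^s_k}^2$ and $\theta\le 1$ give $\distt^\theta_{\cZ^*}(z^s_0,w^s)\le 2\,\distt^\theta_{\cZ^*}(z^s_k,w^s)+2\theta\En{z^s_0-z^s_k}^2$, and $\En{z^s_0-z^s_k}^2\le 2\En{z^s_0-w^s}^2+2\En{z^s_k-w^s}^2$ turns this, together with the display above, into a per-$k$ bound of the shape
\begin{equation*}
\distt^\theta_{\cZ^*}(z^s_0,w^s)\;\le\;\tfrac{2(6\bar{C}^2\alpha L^2+8)}{\tau^2\alpha L^2}\,R^s_k + 4\En{z^s_0-w^s}^2 + 4\En{z^s_k-w^s}^2 .
\end{equation*}
Summing over $k=0,\dots,K-1$ produces the factor $K$ on the left; taking the conditional expectation $\EE^s_0[\cdot]$ (under which $\distt^\theta_{\cZ^*}(z^s_0,w^s)$ and $\En{z^s_0-w^s}^2$ are already measurable), multiplying through by $\tfrac{1}{8\tilde{C}^2}$, and using $\tfrac{6\bar{C}^2\alpha L^2+8}{4\tilde{C}^2\tau^2\alpha L^2}\le 1$ for the stated $\tilde{C}$ (this is where $\tau=\gamma\sqrt{1-\alpha}/L$ forces $\tilde{C}$ to dominate both $\bar{C}/\tau$ and $1/\sqrt{\alpha(1-\alpha)}$) to bound $\tfrac{6\bar C^2\alpha L^2+8}{4\tilde C^2\tau^2\alpha L^2}\EE^s_0[\sum_k R^s_k]\le\EE^s_0[\sum_k R^s_k]$, one reaches~\eqref{eq:eb-in-alg-d}: the leftover drift sums are $\tfrac{K}{2\tilde{C}^2}\En{z^s_0-w^s}^2\le\tfrac{5K}{4\tilde{C}^2}\En{z^s_0-w^s}^2$ and $\tfrac{1}{2\tilde{C}^2}\sum_k\EE^s_0\En{z^s_k-w^s}^2\le\tfrac{1}{\tilde{C}^2}\sum_k\EE^s_0\En{z^s_k-w^s}^2$, i.e.\ exactly $e^s$. (Equivalently, one may bypass the black-box citation and re-run the argument of Lemma~\ref{lemma:eb-in-alg} inside the inner loop, lower-bounding $\En{\bar z^s_k-\Pi_{\cZ^*}(\bar z^s_k)}^2$ directly in terms of $\distt^\theta_{\cZ^*}(z^s_0,w^s)$ after splitting $z^s_k = z^s_0 + (z^s_k-z^s_0)$.)

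The main obstacle is this anchor mismatch: the error bound naturally controls $\distt^\theta_{\cZ^*}(z^s_k,w^s)$, anchored at the current inner iterate, whereas the Lyapunov potential driving the outer recursion of Theorem~\ref{thm:linear-convergence-for-double-loop} is anchored at the inner-loop start $z^s_0$; bridging the two costs the inner-loop ``drift'' $\En{z^s_0-z^s_k}^2$, which cannot be absorbed into the extragradient-residual terms and so must be carried forward as the error $e^s$. The delicate part is arranging the constants so that, after summing over $k$ and taking expectations, the drift collapses into \emph{exactly} the two terms displayed in $e^s$ for the prescribed $\tilde{C}$. A secondary subtlety, inherited from Lemma~\ref{lemma:eb-in-alg}, is that no single solution $z^\ast\in\cZ^*$ serves all indices $k$ in the sum; this is harmless because every step is phrased through the distance $\distt^\theta_{\cZ^*}(\cdot,\cdot)$ rather than a fixed solution, so the optimal $z^\ast$ is free to vary with $k$.
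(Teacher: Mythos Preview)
Your black-box route through Lemma~\ref{lemma:eb-in-alg} has a real gap at the step where you claim
\[
\frac{6\bar{C}^2\alpha L^2+8}{4\tilde{C}^2\tau^2\alpha L^2}\le 1
\]
``for the stated $\tilde{C}$.'' The lemma's only hypothesis on $\tilde{C}$ is (up to a typo) $\tilde{C}\ge\bar{C}/\tau$, i.e.\ $\tilde{C}^2\tau^2\ge\bar{C}^2$. This controls the $6\bar{C}^2\alpha L^2$ part of the numerator, but the constant $+8$ yields the extra requirement $\tilde{C}^2\ge 2/(\tau^2\alpha L^2)=2/(\gamma^2\alpha(1-\alpha))$, which is \emph{not} implied by $\tilde{C}\ge\bar{C}/\tau$: the error-bound constant $\bar{C}=\max\{C_0,D(\cZ)/\epsilon_0\}$ is problem-dependent and carries no relation to $\alpha$ or $\gamma$. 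Your parenthetical (``$\tau=\gamma\sqrt{1-\alpha}/L$ forces $\tilde{C}$ to dominate $1/\sqrt{\alpha(1-\alpha)}$'') is simply false as stated. So the main argument proves only a weaker lemma, one that needs a strictly larger $\tilde{C}$ than what is assumed.

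The reason this happens is that Lemma~\ref{lemma:eb-in-alg} has already \emph{absorbed} the inner drift $\En{z_k-w_k}^2$ into $R_k$, which is exactly where the $+8$ and the $1/(\alpha L^2)$ enter. Re-dividing by $\tilde{C}^2$ afterwards is one division too many. The paper does what you sketch in your final parenthetical remark: it does not pass through $\distt^\theta_{\cZ^*}(z^s_k,w^s)$ at all. It lower-bounds $\En{\bar z^s_k-\Pi_{\cZ^*}(\bar z^s_k)}^2$ directly by $\tfrac14\distt^\theta_{\cZ^*}(z^s_0,w^s)$ minus drift terms in $\En{z^s_k-w^s}^2$ and $\En{z^s_0-w^s}^2$, then applies the raw error bound $\En{\bar z^s_k-\Pi_{\cZ^*}(\bar z^s_k)}^2\le\tilde{C}^2\En{\bar z^s_k-\tilde z^s_{k+1/2}}^2$ and upper-bounds the latter by $2\tilde{C}^2 R^s_k$ (the $\En{z^s_k-w^s}^2$ term appears with a nonpositive coefficient since $\alpha\tau^2L^2\le 1-\alpha$). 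After summing over $k$ and dividing once by $2\tilde{C}^2$, the coefficient in front of $\sum_k R^s_k$ is exactly $1$ under the sole hypothesis $\tilde{C}\ge\bar{C}/\tau$, and the leftover drift terms give precisely $e^s$. Your alternative ``re-run the argument'' is therefore the right fix and coincides with the paper's proof; the two-step version you lead with does not recover the lemma's constants.
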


\begin{proof}
    Let $\tilde{z}^s_{k+1/2} = \Pi_{\mathcal{Z}}(\bar{z}^s_k - \tau F(\bar{z}^s_k))$ denote a ``quasi $(k+\frac{1}{2})$th iterate'' in the $s$th epoch. 
    Again, this is not computed by the algorithm, but a variant of $z^s_{k+1/2}$ computed using $F(\bar{z}^s_k)$ instead of $F(w^s)$.
    To measure the distance between $\tilde{z}^s_{k+1/2}$ and $z^s_{k+1/2}$, we have 
    \begin{align}
        \En{ \tilde{z}^s_{k+1/2} - z^s_{k+1/2} } & = \En{ \Pi_{\mathcal{Z}}(\bar{z}^s_k - \tau F(\bar{z}^s_k)) - \Pi_{\mathcal{Z}}(\bar{z}^s_k - \tau F(w^s)) } \nonumber \\ 
        & \leq \tau \En{ F(\bar{z}^s_k) - F(w^s) } \tag{non-expansiveness of $\Pi_\cZ$} \nonumber \\ 
        & \leq \tau L \En{ \bar{z}^s_k - w^s } \tag{$L$-Lipschitz of $F$} \nonumber \\ 
        & = \alpha \tau L \En{ z^s_k - w^s }. 
        \label{eq:double-loop-svrg-eg-eb-in-alg-dist-to-dummy}
    \end{align}

    Using~\cref{eq:double-loop-svrg-eg-eb-in-alg-dist-to-dummy}, we have 
    \begin{align}
        & \En{ \bar{z}^s_k - \tilde{z}^s_{k+1/2} }^2 \nonumber \\ 
        \leq& 2 \En{ \bar{z}^s_k - z^s_{k+1/2} }^2 + 2 \En{ \tilde{z}^s_{k+1/2} - z^s_{k+1/2} }^2 \nonumber \\ 
        =& 2 \alpha \En{ z^s_k - z^s_{k+1/2} }^2 + 2 (1 - \alpha) \En{w^s - z^s_{k+1/2}}^2 - 2\alpha(1 - \alpha) \En{z^s_k - w^s}^2 + 2 \alpha^2 \tau^2 L^2 \En{z^s_k - w^s}^2 \nonumber \\ 
        =& 2 \alpha \En{ z^s_k - z^s_{k+1/2} }^2 + 2 (1 - \alpha) \En{w^s - z^s_{k+1/2}}^2 + 2\alpha\left( \alpha \tau^2 L^2 - (1 - \alpha)\right) \En{z^s_k - w^s}^2, 
        \label{eq:right-d}
    \end{align}
    where we use $\En{ a + b }^2 \le 2 \En{ a }^2 + 2 \En{ b }^2$ and $\En{\gamma a + (1 - \gamma) b}^2 = \gamma \En{a}^2 + (1 - \gamma) \En{b}^2 - \gamma(1 - \gamma) \En{a - b}^2$, for $\gamma \in [0, 1]$. 

    On the other hand, by $\En{ a + b }^2 \le 2 \En{ a }^2 + 2 \En{ b }^2$ and $\Pi_{\cZ^*}(z) = \argmin_{z' \in \cZ^*} \En{z - z'}$, we have 
    \begin{align} 
        \En{ \bar{z}^s_k - \Pi_{\mathcal{Z}^*}(\bar{z}^s_k) }^2 \geq & \frac{1}{2} \En{ z^s_0 - \Pi_{\mathcal{Z}^*}(\bar{z}^s_k) }^2 - \En{ \bar{z}^s_k - z^s_0 }^2 \nonumber \\ 
        \geq & \frac{1}{2} \En{ z^s_0 - \Pi_{\mathcal{Z}^*}(z^s_0) }^2 - 2 \alpha^2 \En{ z^s_k - w^s }^2 - 2 \En{z^s_0 - w^s}^2 
        \label{eq:left-d-z}
    \end{align}  
    and 
    \begin{equation}
        \En{ \bar{z}^s_k - \Pi_{\mathcal{Z}^*}(\bar{z}^s_k) }^2 \geq \frac{1}{2} \En{ w^s - \Pi_{\mathcal{Z}^*}(\bar{z}^s_k) }^2 - \En{ \bar{z}^s_k - w^s }^2 \geq \frac{1}{2} \En{ w^s - \Pi_{\mathcal{Z}^*}(w^s) }^2 - \alpha^2 \En{ z^s_k - w^s }^2.
        \label{eq:left-d-w}
    \end{equation} 
    Combining~\cref{eq:left-d-z,eq:left-d-w} with a weight coefficient $\theta$, we obtain 
    \begin{equation}
        \En{ \bar{z}^s_k - \Pi_{\mathcal{Z}^*}(\bar{z}^s_k) }^2 
        \geq \frac{\theta}{2} \En{ z^s_0 - \Pi_{\mathcal{Z}^*}(z^s_0) }^2 + \frac{1 - \theta}{2} \En{ w^s - \Pi_{\mathcal{Z}^*}(w^s) }^2 - 2 \theta \En{z^s_0 - w^s}^2 - ( 1 + \theta ) \alpha^2 \En{ z^s_k - w^s }^2.
        \label{eq:left-d-combined}
    \end{equation} 

    After summing up~\cref{eq:left-d-combined} over $k = 0, \ldots, K-1$ and taking expectation $\EE^s_0$, it follows that 
    \begin{align}
        & \sum_{k = 0}^{K-1} \EE^s_0 \En{ \bar{z}^s_k - \Pi_{\mathcal{Z}^*}(\bar{z}^s_k) }^2 \nonumber \\ 
        \geq & \frac{K}{2} \theta \En{ z^s_0 - \Pi_{\mathcal{Z}^*}(z^s_0) }^2 + \frac{K}{2} (1 - \theta) \En{ w^s - \Pi_{\mathcal{Z}^*}(w^s) }^2 - 2 \theta K \En{z^s_0 - w^s}^2 \nonumber \\ 
        & \hspace{120pt} - ( 1 + \theta ) \alpha^2 \sum_{k=0}^{K-1} \EE^s_0 \En{ z^s_k - w^s }^2 \nonumber \\ 
        \geq & \frac{K}{4} \theta \En{ z^s_0 - \Pi_{\mathcal{Z}^*}(w^s) }^2 - \frac{K}{2} \En{ \Pi_{\cZ^*}(z^s_0) - \Pi_{\mathcal{Z}^*}(w^s) }^2 + \frac{K}{2} (1 - \theta) \En{ w^s - \Pi_{\mathcal{Z}^*}(w^s) }^2 \nonumber \\ 
        & \hspace{20pt} - 2 \theta K \En{z^s_0 - w^s}^2 - ( 1 + \theta ) \alpha^2 \EE^s_0 \sum_{k=0}^{K-1} \En{ z^s_k - w^s }^2 \tag{$\En{a}^2 \geq \frac{1}{2}\En{a + b}^2 - \En{b}^2$} \nonumber \\ 
        \geq & \frac{K}{4} \theta \En{ z^s_0 - \Pi_{\mathcal{Z}^*}(w^s) }^2 - \frac{K}{2} \En{ z^s_0 - w^s }^2 + \frac{K}{4} (1 - \theta) \En{ w^s - \Pi_{\mathcal{Z}^*}(w^s) }^2 \nonumber \\ 
        & \hspace{20pt} - 2 \theta K \En{z^s_0 - w^s}^2 - ( 1 + \theta ) \alpha^2 \EE^s_0 \sum_{k=0}^{K-1} \En{ z^s_k - w^s }^2 \tag{the non-expansiveness of $\Pi_{\cZ^*}$} \nonumber \\ 
        \geq & \frac{K}{4} \distt^\theta_{\cZ^*}(z^s_0, w^s) - \left( 2 \theta + \frac{1}{2} \right) K \En{z^s_0 - w^s}^2 - ( 1 + \theta ) \alpha^2 \sum_{k=0}^{K-1} \EE^s_0 \En{ z^s_k - w^s }^2, 
        \label{eq:left-d-}
    \end{align} 
    where the last inequality follows from the definition of $\distt^\theta_{\cZ^*}(z, w)$. 
    
    By applying~\cref{lemma:eb-1}, taking expectation $\bbE^s_0$ and using the tower property, we have 
    \begin{equation}
        \bbE^s_0 \En{ \bar{z}^s_k - \Pi_{\mathcal{Z}^*}(\bar{z}^s_k) }^2 \leq \frac{\bar{C}^2}{\tau^2} \bbE^s_0 \En{ \bar{z}^s_k - \tilde{z}^s_{k+1/2} }^2. 
    \end{equation} 
    Let $\tilde{C} \geq \frac{\bar{C}}{\tau}$. After summing up the above inequality over $k = 0, 1, \ldots, K - 1$, it follows that 
    \begin{equation}
        \sum_{k = 0}^{K-1} \bbE^s_0 \En{ \bar{z}^s_k - \Pi_{\mathcal{Z}^*}(\bar{z}^s_k) }^2 \leq \tilde{C}^2 \sum_{k = 0}^{K-1} \bbE^s_0 \En{ \bar{z}^s_k - \tilde{z}^s_{k+1/2} }^2. 
    \end{equation} 
    Combining this with \cref{eq:right-d,eq:left-d-}, we obtain that 
    \begin{align}
        &\frac{K}{4} \distt^\theta_{\cZ^*}(z^s_0, w^s) - \left( 2 \theta + \frac{1}{2} \right) K \En{z^s_0 - w^s}^2 - ( 1 + \theta ) \alpha^2 \sum_{k=0}^{K-1} \bbE^s_0 \En{ z^s_k - w^s }^2 \nonumber \\ 
        \leq & \tilde{C}^2 \sum_{k=0}^{K-1} \left( 2 \alpha \bbE^s_0 \En{ z^s_k - z^s_{k+1/2} }^2 + 2 (1 - \alpha) \bbE^s_0 \En{w^s - z^s_{k+1/2}}^2 - 2\alpha\left( \alpha \tau^2 L^2 - (1 - \alpha)\right) \bbE^s_0 \En{z^s_k - w^s}^2 \right). 
        \label{eq:double-loop-error-bound-combined}
    \end{align} 
    Since $\tau \leq \frac{\sqrt{1 - \alpha}}{L}$, we have $\alpha \tau^2 L^2 - (1 - \alpha) \leq 0$. 
    Then, it follows that 
    \begin{align}
        \frac{K}{4} \distt^\theta_{\cZ^*}(z^s_0, w^s) \leq & \tilde{C}^2 \sum_{k=0}^{K-1} \left( 2 \alpha \bbE^s_0 \En{ z^s_k - z^s_{k+1/2} }^2 + 2 (1 - \alpha) \bbE^s_0 \En{w^s - z^s_{k+1/2}}^2 \right) \nonumber \\ 
        & \hspace{20pt} + ( 1 + \theta ) \alpha^2 \sum_{k=0}^{K-1} \bbE^s_0 \En{ z^s_k - w^s }^2 + \left( 2 \theta + \frac{1}{2} \right) K \En{z^s_0 - w^s}^2. 
        \label{}
    \end{align} 
    Equivalently, 
    \begin{align}
        \frac{K}{8\tilde{C}^2} \distt^\theta_{\cZ^*}(z^s_0, w^s) \leq & \EE^s_0 \left[ \sum_{k=0}^{K-1} \left( \alpha \En{ z^s_{k+1/2} - z^s_k }^2 + (1 - \alpha) \En{ z^s_{k + 1/2} - w^s }^2 \right) \right] \nonumber \\ 
        & \hspace{20pt} + \frac{( 1 + \theta ) \alpha^2}{2\tilde{C}^2} \sum_{k=0}^{K-1} \bbE^s_0 \En{ z^s_k - w^s }^2 + \frac{\left( 2 \theta + \frac{1}{2} \right) K}{2\tilde{C}^2} \En{z^s_0 - w^s}^2. 
        \label{}
    \end{align} 
    Since $\theta \in [0, 1]$ and $\alpha \in [0, 1)$, we can further obtain~\cref{eq:eb-in-alg-d}. 
\end{proof}

\subsection*{Proof of \cref{thm:linear-convergence-for-double-loop}}

\begin{proof}

    We start from a double-loop version of~\cref{eq:decrease-loopless}. 
    Analogous to the analysis for the loopless SVRG-EG, 
    we can obtain that if $\tau \leq \frac{\gamma\sqrt{1 - \alpha}}{L}$, 
    then for any $z^* \in \cZ^*$, 
    \begin{align}
        & \mathbb{E}^s_k{\En{ z^s_{k + 1} - z^* }}^2 \nonumber \\ 
        \leq & \alpha {\En{ z^s_k - z^* }}^2 + (1 - \alpha) {\En{ w^s - z^* }}^2 - (1 - \alpha)(1 - \gamma) {\En{ z^s_{k + 1/2} - w^s }}^2 - \alpha {\En{ z^s_{k+1/2} - z^s_k }}^2. 
        \label{eq:decrease-double-loop}
    \end{align}

    Summing~\cref{eq:decrease-double-loop} up over $k = 0, \ldots, K - 1$ and taking expectation $\EE^s_0$ on both sides, we obtain that 
    \begin{align}
        \bbE^s_0 \left[ \sum_{k=0}^{K-1} \EE^s_k{\En{ z^s_{k + 1} - z^* }}^2 \right] \leq & \alpha \sum_{k=0}^{K-1} \EE^s_0{\En{ z^s_k - z^* }}^2 + K(1 - \alpha) {\En{ w^s - z^* }}^2 \nonumber \\ 
        & - \bbE^s_0 \left[ \sum_{k=0}^{K-1} \left( (1 - \alpha)(1 - \gamma) \En{ z^s_{k + 1/2} - w^s }^2 + \alpha \En{ z^s_{k+1/2} - z^s_k }^2 \right) \right]. 
        \label{eq:sum-up-decrease-double-loop}
    \end{align} 
    Note that, in this step, we require the identical $z^*$ over all summands. 
    This is because we need the identical terms $\En{w^s - z^*}^2$ for constructing the Lyapunov function. 

    By the tower property of conditional expectations, we have $\bbE^s_0[\bbE^s_k[\cdot]] = \bbE^s_0[\cdot]$ for all $k = 0, 1, \ldots, K - 1$. Hence,~\cref{eq:sum-up-decrease-double-loop} yields that     
    \begin{align}
        \sum_{k=0}^{K-1} \EE^s_0{\En{ z^s_{k + 1} - z^* }}^2 \leq & \alpha \sum_{k=0}^{K-1} \EE^s_0{\En{ z^s_k - z^* }}^2 + K(1 - \alpha) {\En{ w^s - z^* }}^2 \nonumber \\ 
        & - \bbE^s_0 \left[ \sum_{k=0}^{K-1} \left( (1 - \alpha)(1 - \gamma) \En{ z^s_{k + 1/2} - w^s }^2 + \alpha \En{ z^s_{k+1/2} - z^s_k }^2 \right) \right]. 
        \label{eq:sum-up-decrease-double-loop-}
    \end{align} 

    Using Jensen's inequality and $w^{s+1} = \frac{1}{K} \sum_{k=1}^K z^s_k$, we can lower bound the left hand side of the above inequality as follows: 
    \begin{align}
        & \sum_{k=0}^{K-1} \EE^s_0{\En{ z^s_{k + 1} - z^* }}^2 \nonumber \\ 
        = & \sum_{k=1}^{K} \EE^s_0{\En{ z^s_{k} - z^* }}^2 \nonumber \\ 
        = & (1 - \alpha) \sum_{k=0}^{K - 1} \EE^s_0{\En{ z^s_{k} - z^* }}^2 + \alpha \sum_{k=1}^K \EE^s_0{\En{ z^s_{k} - z^* }}^2 + \alpha\EE^s_0{\En{ z^s_k - z^* }}^2 - \alpha{\En{ z^s_0 - z^* }}^2 \nonumber \\
        \geq & K(1-\alpha) \EE^s_0{\En{ w^{s+1} - z^* }}^2 + \alpha \sum_{k=0}^{K - 1} \EE^s_0{\En{ z^s_{k} - z^* }}^2 + \alpha\EE^s_0{\En{ z^s_k - z^* }}^2 - \alpha{\En{ z^s_0 - z^* }}^2.
        \label{eq:double-loop-jensen-inequality}
    \end{align}

    Combining~\eqref{eq:double-loop-jensen-inequality} with~\eqref{eq:sum-up-decrease-double-loop-}, 
    and rearranging terms, 
    we then have 
    \begin{align}
        & \EE^s_0\left[ \alpha{\En{ z^{s+1}_0 - z^* }}^2 + K(1-\alpha) {\En{ w^{s+1} - z^* }}^2 \right] \nonumber \\ 
        \leq & \alpha{\En{ z^s_0 - z^* }}^2 + \sum_{k=0}^{K-1} \EE^s_0{\En{ z^s_{k + 1} - z^* }}^2 - \alpha \sum_{k=0}^{K - 1} \EE^s_0{\En{ z^s_{k} - z^* }}^2 \tag{\cref{eq:double-loop-jensen-inequality} and $z^s_k = z^{s+1}_0$} \nonumber \\ 
        \leq & \alpha {\En{ z^s_0 - z^* }}^2 + K(1 - \alpha) {\En{ w^s - z^* }}^2 \nonumber \\ 
        & \hspace{60pt} - \bbE^s_0 \left[ \sum_{k=0}^{K-1} \left( (1 - \alpha)(1 - \gamma) \En{ z^s_{k + 1/2} - w^s }^2 + \alpha \En{ z^s_{k+1/2} - z^s_k }^2 \right) \right], 
        \label{eq:decrease-double-loop-sum}
    \end{align} 
    where the last inequality follows from~\cref{eq:sum-up-decrease-double-loop-}. 

    Let $\tilde{\theta} = \frac{\alpha}{\alpha + K(1 - \alpha)}$, after applying~\cref{lemma:eb-in-alg-d} we have 
    \begin{align}
        & \EE^s_0\left[ \alpha{\En{ z^{s+1}_0 - z^* }}^2 + K(1-\alpha) {\En{ w^{s+1} - z^* }}^2 \right] \nonumber \\ 
        \leq& \alpha {\En{ z^s_0 - z^* }}^2 + K(1 - \alpha) {\En{ w^s - z^* }}^2 \nonumber \\ 
        & \hspace{80pt} - \frac{1}{2}\EE^s_0 \left[ \sum_{k=0}^{K-1} \left( (1 - \alpha)(1 - \gamma) \En{ z^s_{k + 1/2} - w^s }^2 + \alpha \En{ z^s_{k+1/2} - z^s_k }^2 \right) \right] \nonumber \\ 
        & \hspace{80pt} - \frac{1}{2}\EE^s_0 \left[ \sum_{k=0}^{K-1} \left( (1 - \alpha)(1 - \gamma) \En{ z^s_{k + 1/2} - w^s }^2 + \alpha \En{ z^s_{k+1/2} - z^s_k }^2 \right) \right] \nonumber \\ 
        \leq & \alpha {\En{ z^s_0 - z^* }}^2 + K(1 - \alpha) {\En{ w^s - z^* }}^2 \nonumber \\ 
        & \hspace{80pt} - \frac{1 - \gamma}{2}\EE^s_0 \left[ \sum_{k=0}^{K-1} \left( (1 - \alpha) \En{ z^s_{k + 1/2} - w^s }^2 + \alpha \En{ z^s_{k+1/2} - z^s_k }^2 \right) \right] \nonumber \\ 
        & \hspace{80pt} - \frac{1}{2}\EE^s_0 \left[ \sum_{k=0}^{K-1} \left( (1 - \alpha)(1 - \gamma) \En{ z^s_{k + 1/2} - w^s }^2 + \alpha \En{ z^s_{k+1/2} - z^s_k }^2 \right) \right] \nonumber \\ 
        \leq & \alpha {\En{ z^s_0 - z^* }}^2 + K(1 - \alpha) {\En{ w^s - z^* }}^2 - \frac{1 - \gamma}{2} \frac{K}{8\tilde{C}^2} \distt^\theta_{\cZ^*}(z^s_0, w^s) \nonumber \\ 
        & \hspace{80pt} - \frac{1}{2}\EE^s_0 \left[ \sum_{k=0}^{K-1} \left( (1 - \alpha)(1 - \gamma) \En{ z^s_{k + 1/2} - w^s }^2 + \alpha \En{ z^s_{k+1/2} - z^s_k }^2 \right) \right] \nonumber \\ 
        & \hspace{80pt}+ \frac{1 - \gamma}{2} \frac{1}{\tilde{C}^2} \sum_{k=0}^{K-1} \EE^s_0 \En{ z^s_k - w^s }^2 + \frac{1 - \gamma}{2} \frac{5K}{4\tilde{C}^2} \En{z^s_0 - w^s}^2. 
        \label{eq:d-svrg-eg-crucial-applied-eb}
    \end{align} 

    We can bound the last two terms (the last line) in the above inequality as follows: 
    \begin{align}
        & \frac{1 - \gamma}{2} \frac{1}{\tilde{C}^2} \sum_{k=0}^{K-1} \EE^s_0 \En{ z^s_k - w^s }^2 + \frac{1 - \gamma}{2} \frac{5K}{4\tilde{C}^2} \En{z^s_0 - w^s}^2 \nonumber \\ 
        \leq & \frac{1 - \gamma}{2} \frac{1}{\tilde{C}^2} \sum_{k=0}^{K-1} \EE^s_0 \En{ z^s_k - w^s }^2 + \frac{1 - \gamma}{2} \frac{5K}{4\tilde{C}^2} \sum_{k=0}^{K-1} \EE^s_0 \En{z^s_k - w^s}^2 \nonumber \\ 
        \leq & \frac{(1 - \gamma)(2K + 1)}{2 \tilde{C}^2} \sum_{k=0}^{K-1} \EE^s_0 \En{ z^s_k - w^s }^2 \nonumber \\ 
        \leq & \frac{(1 - \gamma)(2K + 1)}{\tilde{C}^2} \sum_{k=0}^{K-1} \left( \EE^s_0 \En{ z^s_k - z^s_{k+1/2} }^2 + \EE^s_0 \En{w^s - z^s_{k+1/2}}^2 \right) \tag{$\En{a + b}^2 \leq 2 \En{a}^2 + 2 \En{b}^2$} \nonumber \\ 
        \leq & \frac{(1 - \gamma)(2K + 1)}{\alpha(1 - \alpha)\tilde{C}^2} \sum_{k=0}^{K-1} \left( \alpha \EE^s_0 \En{ z^s_k - z^s_{k+1/2} }^2 + (1 - \alpha) \EE^s_0 \En{w^s - z^s_{k+1/2}}^2 \right) \nonumber \\ 
        \leq & \frac{2K + 1}{\alpha(1 - \alpha)\tilde{C}^2} \EE^s_0 \left[ \sum_{k=0}^{K-1} \left( \alpha \En{ z^s_k - z^s_{k+1/2} }^2 + (1 - \alpha)(1 - \gamma) \En{w^s - z^s_{k+1/2}}^2 \right) \right]. 
    \end{align} 

    Since $\tilde{C} \geq \sqrt{\frac{4K + 2}{\alpha(1 - \alpha)}}$, we have $\frac{2K + 1}{\alpha(1 - \alpha)\tilde{C}^2} \leq \frac{1}{2}$. Hence, we can eliminate the last two lines of~\cref{eq:d-svrg-eg-crucial-applied-eb} and then obtain that 
    \begin{align}
        & \EE^s_0\left[ \alpha{\En{ z^{s+1}_0 - z^* }}^2 + K(1-\alpha) {\En{ w^{s+1} - z^* }}^2 \right] \nonumber \\ 
        & \hspace{80pt} \leq \alpha {\En{ z^s_0 - z^* }}^2 + K(1 - \alpha) {\En{ w^s - z^* }}^2 - \frac{(1 - \gamma)K}{16\tilde{C}^2} \distt^\theta_{\cZ^*}(z^s_0, w^s).  
        \label{}
    \end{align} 

    Taking $z^* = \Pi^{\tilde{\theta}}_{\cZ^*}(z^s_0, w^s)$, and dividing $\alpha + K(1 - \alpha)$ on both sides, we have 
    \begin{align}
        & \EE^s_0\left[ \tilde{\theta} {\En{ z^{s+1}_0 - z^* }}^2 + (1 - \tilde{\theta}) {\En{ w^{s+1} - z^* }}^2 \right] \nonumber \\ 
        \leq & \distt^{\tilde{\theta}}_{\cZ^*}(z^s_0, w^s) - \frac{(1 - \gamma)K}{16\tilde{C}^2(\alpha + K(1 - \alpha))} \distt^{\tilde{\theta}}_{\cZ^*}(z^s_0, w^s).  
        \label{}
    \end{align}     

    By the definition of $\text{dist}^{\tilde{\theta}}_{\mathcal{Z}^*}$, we obtain~\cref{eq:linear-convergence-for-double-loop}. 
\end{proof}

\subsection*{Proof of \cref{crl:linear-convergence-for-double-loop}}

\begin{proof} 
    Let 
    \begin{equation}
        \rho = \frac{(1 - \gamma)K}{16\tilde{C}^2(\alpha + K(1 - \alpha))}. \nonumber 
    \end{equation} 
    By iterating~\cref{thm:linear-convergence-for-double-loop} and taking the total expectation $\bbE = \bbE^0_0$, we have 
    \begin{equation}
        \bbE\left[ \distt^{\tilde{\theta}}_{\cZ^*}(z^s_0, w^s) \right] \leq (1 - \rho)^s \distt^{\tilde{\theta}}_{\cZ^*}(z^0_0, w_0) = (1 - \rho)^s \min_{z^* \in \cZ^*} \En{z_0 - z^*}^2. 
    \end{equation} 
    To reach expected $\epsilon$-accuracy as measured by the weighted distance (Lyapunov function) to the solution set, 
    we need $(1 - \rho)^s \leq \epsilon$ and equivalently, 
    \begin{equation}
        s \geq \frac{1}{\log{\frac{1}{1 - \rho}}} \log{\frac{1}{\epsilon}} \approx \frac{1}{\rho} \log{\frac{1}{\epsilon}} \quad \text{when $\rho$ is small}, 
    \end{equation} 
    where $s$ counts the number of epochs (outer loops). 

    Recall that $1 - \alpha = \frac{1}{K} = \frac{2}{N}$, $\gamma = 0.99$ and $\tau = \frac{0.99\sqrt{2}}{\sqrt{N}L}$. 
    Note that 
    \begin{equation}
        \tilde{C}^2 = \max\left\{ \frac{\bar{C}^2}{\tau^2}, \frac{4K + 2}{\alpha(1 - \alpha)} \right\} \leq \frac{\bar{C}^2}{\tau^2} + \frac{4K + 2}{\alpha(1 - \alpha)} = \frac{\bar{C}^2 N L^2}{0.99^2 \times 2} + \frac{2N + 2}{(1 - \frac{2}{N})\frac{2}{N}} = \cO(\bar{C}^2 N L^2 + N^2). 
    \end{equation} 
    Since 
    \begin{align} 
        \frac{1}{\rho} = \frac{16\tilde{C}^2(\alpha + K(1 - \alpha))}{(1 - \gamma)K} = \frac{16\tilde{C}^2(2 - \frac{2}{N})}{0.01 \frac{N}{2}} = \cO(\bar{C}^2 L^2 + N). 
    \end{align} 
    Since we need $2 K + N$ evaluations of $F_\xi$ per epoch, we need $2N \times s$ evaluations of $F_\xi$ to reach expected $\epsilon$-accuracy. 
    Therefore, we attain that the time complexity is $\mathcal{O}\left( (\bar{C}^2 N L^2 + N^2) \log{\frac{1}{\epsilon}} \right)$. 
\end{proof}

\subsection{Convergence of (Deterministic) Extragradient method}

We follows the steps in the proof of linear convergence in~\citet{tseng1995linear} and shows a linear convergence rate for the (deterministic) Extragradient (EG) algorithm. 
We re-state the result as follows: 
\begin{theorem}{\citep[Corollary 3.3]{tseng1995linear}}
    Let Assumptions $1$-$4$ and \textnormal{Error-Bound Condition} hold. 
    Let $\tau = \gamma / L$, for $\gamma \in (0, 1)$, 
    then (deterministic) EG ensures 
    \begin{equation} 
        \distt_{\cZ^*}(z_{k+1}) \leq \left(1 - \tau^2 \frac{(1 - \gamma)^3}{2 \bar{C}^2} \right) \distt_{\cZ^*}(z_k), 
        \label{eq:thm-paul-tseng} 
    \end{equation} 
    where $\distt_{\cZ^*}(z) = \min_{z' \in \cZ^*} \En{z - z'}^2$. 
    \label{thm:thm-paul-tseng}
\end{theorem}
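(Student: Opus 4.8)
The plan is to combine the classical one-step contraction of deterministic extragradient with the \emph{global} error bound of \cref{lemma:eb-1}. Throughout, write $z_{k+1/2} = \Pi_\cZ(z_k - \tau F(z_k))$ and $z_{k+1} = \Pi_\cZ(z_k - \tau F(z_{k+1/2}))$, which is exactly \cref{alg:eg-svrg} run with a full–gradient oracle and $\alpha = 1$ (so $w_k \equiv z_k$ and $\hat F(z_{k+1/2}) = F(z_{k+1/2})$).

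First I would establish the descent inequality: for every $z^* \in \cZ^*$,
\[
  \En{z_{k+1} - z^*}^2 \le \En{z_k - z^*}^2 - (1 - \gamma^2)\,\En{z_k - z_{k+1/2}}^2 .
\]
To do so, use the two first-order optimality (projection) inequalities characterizing $z_{k+1/2}$ and $z_{k+1}$, substituting $z = z_{k+1}$ into the former and $z = z^*$ into the latter; adding them and expanding every inner product via $2\inp{a}{b} = \En{a+b}^2 - \En{a}^2 - \En{b}^2$ yields
\[
  \En{z_{k+1}-z^*}^2 \le \En{z_k-z^*}^2 - \En{z_k-z_{k+1/2}}^2 - \En{z_{k+1}-z_{k+1/2}}^2 + 2\tau\inp{F(z_k)-F(z_{k+1/2})}{z_{k+1}-z_{k+1/2}} .
\]
The cross term is handled by Cauchy--Schwarz, the $L$-Lipschitz continuity of $F$ (which follows from \cref{asp:4}, exactly as argued inside the proof of \cref{lemma:eb-in-alg}), and $2ab \le a^2+b^2$: this loses precisely $\tau^2 L^2 = \gamma^2$ times $\En{z_k-z_{k+1/2}}^2$ and cancels $\En{z_{k+1}-z_{k+1/2}}^2$. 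One also needs $\inp{F(z_{k+1/2})}{z_{k+1/2}-z^*}\ge 0$, which comes from monotonicity of $F$ together with the defining inequality of~\ref{prob:vi} at $z^*$ (the same step as \eqref{eq:VI-inequality}).

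Next I would invoke the error bound. Applying \eqref{eq:eb} of \cref{lemma:eb-1} at $z = z_k$ and noting $\Pi_\cZ(z_k - \tau F(z_k)) = z_{k+1/2}$ gives $\distt_{\cZ^*}(z_k) = \En{z_k - \Pi_{\cZ^*}(z_k)}^2 \le \tfrac{\bar C^2}{\tau^2}\En{z_k - z_{k+1/2}}^2$, i.e. $\En{z_k - z_{k+1/2}}^2 \ge \tfrac{\tau^2}{\bar C^2}\distt_{\cZ^*}(z_k)$. Taking $z^* = \Pi_{\cZ^*}(z_k)$ in the descent inequality and using $\distt_{\cZ^*}(z_{k+1}) \le \En{z_{k+1} - \Pi_{\cZ^*}(z_k)}^2$ then yields
\[
  \distt_{\cZ^*}(z_{k+1}) \le \Bigl(1 - \tfrac{(1-\gamma^2)\tau^2}{\bar C^2}\Bigr)\distt_{\cZ^*}(z_k).
\]
Since $1-\gamma^2 = (1-\gamma)(1+\gamma) \ge 1-\gamma \ge \tfrac{(1-\gamma)^3}{2}$ for $\gamma\in(0,1)$, this implies the stated bound \eqref{eq:thm-paul-tseng} (in fact the constant obtained this way is slightly better than the one quoted).

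I do not anticipate a real obstacle: the argument is a textbook combination of the extragradient contraction and an error bound, and both ingredients are already in hand (the Lipschitz bound and the VI/monotonicity manipulation are lifted verbatim from the proof of \cref{lemma:eb-in-alg}, and the global error bound is \cref{lemma:eb-1}). The only point needing a little care is the bookkeeping that makes the $\En{z_{k+1}-z_{k+1/2}}^2$ term cancel, and the fact that a single fixed $z^* = \Pi_{\cZ^*}(z_k)$ suffices rather than a $k$-dependent sequence of solutions — this is possible precisely because the deterministic iteration has no snapshot/Lyapunov coupling, so there is no summation over iterates as in the SVRG-EG proofs.
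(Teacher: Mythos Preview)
Your proposal is correct, and in fact cleaner than the paper's argument. The difference lies in how the cross term $2\tau\inp{F(z_k)-F(z_{k+1/2})}{z_{k+1}-z_{k+1/2}}$ is absorbed. You split Young's inequality asymmetrically so that the $\En{z_{k+1}-z_{k+1/2}}^2$ term cancels exactly, leaving a clean $(1-\gamma^2)\En{z_k-z_{k+1/2}}^2$ in the descent inequality, and then apply \cref{lemma:eb-1} directly at $z_k$. The paper instead uses the symmetric split $2\tau L\,ab \le \tau L\,a^2 + \tau L\,b^2$, obtaining a $(1-\gamma)$ factor on \emph{both} squared terms; it then passes through $\En{z_k-z_{k+1}}^2$ via $\En{a}^2+\En{b}^2\ge\tfrac12\En{a+b}^2$ and finally uses the triangle-inequality/non-expansiveness estimate $\En{z_k-z_{k+1}}\ge(1-\gamma)\En{z_k-z_{k+1/2}}$ before invoking the error bound. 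This detour (which mirrors Tseng's original presentation) costs two extra factors of $(1-\gamma)$ and the $\tfrac12$, yielding the quoted $(1-\gamma)^3/2$ rather than your $(1-\gamma^2)$. Your route buys a sharper contraction factor---by $2(1+\gamma)/(1-\gamma)^2$---and a shorter proof; the paper's route is closer to the historical derivation but is otherwise not needed here.
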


\begin{proof}
    In this proof, we use the same notation as this work. 
    The iterates in the (deterministic) EG algorithm are updated as the following two steps: 
        $$z_{k+1/2} = \Pi_\cZ \left( z_k - \tau F(z_k) \right) \quad\text{and}\quad z_{k+1} = \Pi_\cZ \left( z_k - \tau F(z_{k+1/2}) \right)$$
    for $k \in \mathbb{N}$. 
    By the property of the ($l_2$ norm) projection onto a convex set, we attain
    \begin{equation}
        \langle z_{k+1/2} - z_k + \tau F(z_k),\, z_{k+1} - z_{k+1/2} \rangle \geq 0 
        \label{eq:det-eg-proj-inequality-1} 
    \end{equation} 
    and 
    \begin{equation}
        \langle z_{k+1} - z_k + \tau F(z_{k+1/2}),\, z^* - z_{k+1} \rangle \geq 0 
        \label{eq:det-eg-proj-inequality-2} 
    \end{equation} 
    for some $z^* \in \cZ^*$. 
    By $2\inp{a}{b} = \En{a}^2 + \En{b}^2 - \En{a - b}^2$, we have 
    \begin{equation}
        2 \langle z_{k+1/2} - z_k,\, z_{k+1} - z_{k+1/2} \rangle = \En{ z_{k+1} - z_k }^2 - \En{ z_{k+1/2} - z_k }^2 - \En{ z_{k+1} - z_{k+1/2} }^2 
        \label{eq:det-eg-inp-identity-1}
    \end{equation}
    and 
    \begin{equation}
        2 \langle z_{k+1} - z_k,\, z^* - z_{k+1} \rangle = \En{ z_k - z^* }^2 - \En{ z_{k+1} - z_k }^2 - \En{ z_{k+1} - z^* }^2. 
        \label{eq:det-eg-inp-identity-2}
    \end{equation} 
    Also, we have 
    \begin{equation}
        \inp{F(z_{k+1/2})}{z^* - z_{k+1/2}} \leq \inp{F(z^*)}{z^* - z_{k+1/2}} \leq 0 
        \label{eq:det-eg-inp-inequality-1}
    \end{equation} 
    and 
    \begin{align} 
        \inp{F(z_k) - F(z_{k+1/2})}{z_{k+1} - z_{k+1/2}} &\leq \En{ F(z_{k+1/2}) - F(z_k) } \En{ z_{k+1} - z_{k+1/2} } \nonumber \\ 
        &\leq L \En{z_{k+1/2} - z_k} \En{z_{k+1} - z_{k+1/2}}. 
        \label{eq:det-eg-inp-inequality-2}
    \end{align}  

Summing up~\cref{eq:det-eg-proj-inequality-1,eq:det-eg-proj-inequality-2}, and using~\cref{eq:det-eg-inp-identity-1,eq:det-eg-inp-identity-2,eq:det-eg-inp-inequality-1,eq:det-eg-inp-inequality-2}, we obtain 
\begin{align}
    & \En{ z_{k+1} - z^* }^2 \nonumber \\ 
    \leq& \En{ z_k - z^* }^2 - \En{ z_k - z_{k+1/2} }^2 - \En{ z_{k+1/2} - z_{k+1} }^2 + 2 \tau L \En{z_{k+1/2} - z_k} \En{ z_{k+1/2} - z_{k+1} }.
    \label{eq:decrease-det-eg}
\end{align}

Let $z^*$ be the solution minimizing the Euclidean distance to $z_k$, i.e., $z^* = \Pi_{\cZ^*}(z_k) = \arg\min_{z^* \in \mathcal{Z}^*} \En{ z_k - z^* }$.
\begin{align}
    & \distt_{\cZ^*}(z_{k+1}) \nonumber \\ 
    \leq & \En{ z_{k+1} - z^*_k }^2 \nonumber \\ 
    \leq & \En{ z_k - z^*_k }^2 - \En{ z_k - z_{k+1/2} }^2 - \En{ z_{k+1/2} - z_{k+1} }^2 + 2 \tau L \En{ z_k - z_{k+1/2} } \En{ z_{k+1/2} - z_{k+1} } \tag{\cref{eq:decrease-det-eg}} \nonumber \\ 
    \leq & \En{ z_k - z^*_k }^2 - \En{ z_k - z_{k+1/2} }^2 - \En{ z_{k+1/2} - z_{k+1} }^2 + \tau L \En{ z_k - z_{k+1/2} }^2 + \tau L \En{ z_{k+1/2} - z_{k+1} }^2 \nonumber \\ 
    = & \En{ z_k - z^*_k }^2 - (1 - \tau L) \left( \En{ z_k - z_{k+1/2} }^2 + \En{ z_{k+1/2} - z_{k+1} }^2 \right) \nonumber \\ 
    \leq & \En{ z_k - z^*_k }^2 - \frac{1}{2}(1 - \tau L) \En{ z_k - z_{k+1} }^2 \nonumber \\ 
    = & \distt_{\cZ^*}(z_k) - \frac{1}{2}(1 - \gamma) \En{ z_k - z_{k+1} }^2,
    \label{eq:decrease-eg-psi}
\end{align}
where in the first inequality and the last equality we use the definition of $\distt_{\cZ^*}(\cdot)$, 
in the last two inequalities we use $2ab \leq a^2 + b^2$ and $ 2\En{a}^2 + 2\En{b}^2 \geq \En{a + b}^2$, 

By the triangle inequality, the non-expansiveness of the projection, and the Lipschitz continuity of $F$, we have 
\begin{align}
    \En{ z_k - z_{k+1} } \geq\;& \En{ z_k - z_{k+1/2} } - \En{ z_{k+1/2} - z_{k+1} } \nonumber \\
    =\;& \En{ z_k - z_{k+1/2} } - \En{ \Pi_\cZ \left( z_k - \tau F(z_k) \right) - \Pi_\cZ \left( z_k - \tau F(z_{k+1/2}) \right) } \nonumber \\
    \geq\;& \En{ z_k - z_{k+1/2} } - \tau \En{ F(z_k) - F(z_{k+1/2}) } \nonumber \\
    \geq\;& (1 - \tau L) \En{ z_k - z_{k+1/2} } = (1 - \gamma) \En{ z_k - z_{k+1/2}}. 
    \label{eq:right-eg}
\end{align}

The error-bound condition tells us  
\begin{equation}
    \En{z_k - \Pi_{\cZ^*}(z_k)} \le \frac{\bar{C}^2}{\tau^2} \En{ z_k - z_{k+1/2} }^2.
    \label{eq:eb-eg}
\end{equation}

Then, combining~\cref{eq:decrease-eg-psi,eq:right-eg,eq:eb-eg}, we have  
\begin{align}
    \distt_{\cZ^*}(z_{k+1}) \leq \distt_{\cZ^*}(z_k) - \frac{1}{2}(1 - \gamma) \En{ z_k - z_{k+1} }^2 
    &\leq \distt_{\cZ^*}(z_k) - \frac{1}{2}(1 - \gamma)^3 \En{ z_k - z_{k+1/2} }^2 \nonumber \\ 
    &\leq \distt_{\cZ^*}(z_k) - \tau^2 \frac{(1 - \gamma)^3}{2 \bar{C}^2} \distt_{\cZ^*}(z_k). 
\end{align}

\end{proof}

\begin{corollary} 
    Set $\tau = \frac{\gamma}{L}$ in the deterministic EG algorithm. 
    Then, the time complexity to reach $\epsilon$-accuracy is $\mathcal{O}\left( \bar{C}^2 N L^2 \log(\frac{1}{\epsilon}) \right)$. 
\end{corollary}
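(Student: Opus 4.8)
The plan is to derive the stated time complexity directly from the linear-rate bound in \cref{thm:thm-paul-tseng}, mirroring the structure of the proofs of \cref{crl:linear-convergence-for-loopless} and \cref{crl:linear-convergence-for-double-loop}. First I would substitute $\tau = \gamma/L$ into the contraction factor appearing in \eqref{eq:thm-paul-tseng}, obtaining
\[
    \distt_{\cZ^*}(z_{k+1}) \leq \rho\,\distt_{\cZ^*}(z_k), \qquad \rho = 1 - \frac{\gamma^2(1-\gamma)^3}{2\bar{C}^2 L^2}.
\]
Iterating this inequality from $k=0$ yields $\distt_{\cZ^*}(z_k) \leq \rho^k\,\distt_{\cZ^*}(z_0) = \rho^k \min_{z^*\in\cZ^*}\En{z_0 - z^*}^2$, so the deterministic EG iterates converge linearly to the solution set in squared Euclidean distance.

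Next I would convert this into an iteration count. To reach $\epsilon$-accuracy, i.e. $\distt_{\cZ^*}(z_k) \leq \epsilon$, it suffices that $\rho^k \leq \epsilon$, equivalently $k \geq \frac{\log(1/\epsilon)}{\log(1/\rho)}$. Using $\log\frac{1}{1-\rho'} \approx \rho'$ when $\rho'$ is small (here $\rho' = 1-\rho = \tfrac{\gamma^2(1-\gamma)^3}{2\bar C^2 L^2}$), this is $k = O\!\big(\tfrac{1}{1-\rho}\log\tfrac{1}{\epsilon}\big) = O\!\big(\bar C^2 L^2 \log\tfrac{1}{\epsilon}\big)$, since $\gamma \in (0,1)$ is a constant.

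Finally I would account for the per-iteration cost. Each iteration of deterministic EG performs two full evaluations of $F$ (one at $z_k$ for the extrapolation step, one at $z_{k+1/2}$ for the update step), and by our convention a full gradient evaluation costs $O(N)$ relative to a sampled gradient. Multiplying the iteration count by this $O(N)$ cost gives the claimed total time complexity $\mathcal{O}\!\big(\bar C^2 N L^2 \log\tfrac{1}{\epsilon}\big)$. There is no real obstacle here — this is a routine corollary — the only point requiring a word of care is the standard logarithm-to-linear approximation $\log(1/\rho)\approx 1-\rho$, which is the same step used in the other complexity corollaries and introduces only a constant factor.
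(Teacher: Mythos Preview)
Your proposal is correct and follows essentially the same approach as the paper: substitute $\tau=\gamma/L$ into the contraction factor from \cref{thm:thm-paul-tseng}, iterate to obtain geometric decay, invert via the standard $\log\tfrac{1}{1-\rho'}\approx\rho'$ approximation to get an $O(\bar{C}^2 L^2)$ iteration count, and multiply by the $O(N)$ per-iteration cost of full gradient evaluations. The only cosmetic difference is that the paper uses $\rho$ for the small quantity $\tau^2(1-\gamma)^3/(2\bar{C}^2)$ rather than the full contraction factor, and it accounts the per-iteration cost simply as $N$ evaluations of $F_\xi$ rather than explicitly noting the two full gradients.
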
 

\begin{proof} 
    Let 
    \begin{equation}
        \rho = \tau^2 \frac{(1 - \gamma)^3}{2 \bar{C}^2}. \nonumber 
    \end{equation} 
    By iterating~\cref{eq:thm-paul-tseng}, we have 
    \begin{equation}
        \distt_{\cZ^*}(z_k) \leq (1 - \rho)^k \distt_{\cZ^*}(z_0) = (1 - \rho)^k \min_{z^* \in \cZ^*} \En{z_0 - z^*}^2. 
    \end{equation} 
    To reach $\epsilon$-accuracy as measured by $\distt_{\cZ^*}$, 
    we need $(1 - \rho)^k \leq \epsilon$ and equivalently, 
    \begin{equation}
        k \geq \frac{1}{\log{\frac{1}{1 - \rho}}} \log{\frac{1}{\epsilon}} \approx \frac{1}{\rho} \log{\frac{1}{\epsilon}} \quad \text{when $\rho$ is small}, 
    \end{equation} 
    where $k$ counts the number of iterations. 
    Since $\tau = \frac{\gamma}{L}$, $\gamma = 0.99$
    \begin{align} 
        \frac{1}{\rho} = \frac{2\bar{C}^2}{(1 - \gamma)^3 \tau^2} = \frac{2\bar{C}^2 L^2}{0.01^3 \times 0.99^2} = \cO(\bar{C}^2 L^2). 
    \end{align} 
    Since we need $N$ evaluations of (equivalent) $F_\xi$ per iteration, we need $N \times k$ evaluations of $F_\xi$ to reach $\epsilon$-accuracy. 
    Therefore, we attain that the time complexity is $\mathcal{O}\left( \bar{C}^2 N L^2 \log{\frac{1}{\epsilon}} \right)$. 
\end{proof} 

\paragraph*{Remark.} While it is possible to obtain a smaller Lipschitz constant $L_f$ for $F$ than $L$, the difference between $L$ and $L_f$ is not increasing with the size of the problem. 

\section{LINEAR CONVERGENCE UNDER WEAK SHARPNESS} 
\label{app:linear-convergence-under-weak-sharpness}

\subsection*{Proof of~\cref{lem:rsi}} 

\begin{proof}
    By definition, if the solution set of the VI is weakly sharp, we have~\cref{eq:weak-sharpness-VI}. This implies that there exists a positive number $\mu$ such that 
    \begin{equation}
        \mu \bB \subset F(z^*) + \left[ T_{\mathcal{Z}}(z') \cap N_{\mathcal{Z}^*}(z') \right]^\circ, 
        \label{eq:weak-sharpness-geo-2} 
    \end{equation}
    for every $z^*, z' \in \mathcal{Z}^*$, where $\bB$ represents a unit ball in $V$. 
    It is known that~\cref{eq:weak-sharpness-geo-2} is equivalent to 
    \begin{equation}
        \langle F(z^*), z \rangle \geq \alpha \En{ z }, \quad z \in T_{\mathcal{Z}}(z') \cap N_{\mathcal{Z}^*}(z'), 
        \label{eq:weak-sharpness-geo-equivalence} 
    \end{equation}
    for every $z^*, z' \in \mathcal{Z}^*$ (e.g., see~\citet{marcotte1998weak}). 
    Indeed, if~\cref{eq:weak-sharpness-geo-2} hold, for each $y \in \bB$ we have 
    $\mu y - F(z^*) \subset \left[ T_{\mathcal{Z}}(z') \cap N_{\mathcal{Z}^*}(z') \right]^\circ$ for every $z^*, z' \in \mathcal{Z}^*$. 
    It follows that $\inp{\mu y - F(z^*)}{z} \leq 0$ for all $z \in T_{\mathcal{Z}}(z') \cap N_{\mathcal{Z}^*}(z')$. 
    Taking $y = \frac{z}{\En{z}}$, we attain~\cref{eq:weak-sharpness-geo-equivalence}. 
    We omitted the proof for the other direction here. 
    As it can be verified that $z - \Pi_\cZ(z) \in T_{\mathcal{Z}}(\Pi_\cZ(z)) \cap N_{\mathcal{Z}^*}(\Pi_\cZ(z))$ for all $z \in \mathcal{Z}^*$, 
    \cref{eq:weak-sharpness-error-bound} follows from~\cref{eq:weak-sharpness-geo-equivalence}. 
\end{proof}
\paragraph*{Remark.} There are some works using~\cref{eq:weak-sharpness-error-bound} (or similar inequalities) as the definition of ``weak sharpness''~\citep{nguyen2021weak,kannan2019optimal}. 
While we assume a more classical assumption in this paper, our results can be generalized to their settings naturally. 

Before proving \cref{thm:linear-convergence-for-loopless-weak-sharpness}, we first provide a technical lemma.
Recall that, in the proof of \cref{thm:linear-convergence-for-loopless}, to derive \cref{eq:decrease-loopless-lyapunov-func-fixed-stepsize}, 
we used \cref{eq:VI-inequality} to bound $\mathbb{E}_k\left[\langle \hat{F}(z_{k + 1/2}), \, z^* - z_{k + 1/2} \rangle\right]$. Here, instead, we need a tighter bound for the same term.
\begin{lemma}
    Let Assumptions $1$-$4$ and \textnormal{Weak Sharpness} hold. 
    Let $\{z_k, w_k, z_{k+1/2}\}_{k \in \bbN_+}$ be generated by~\cref{alg:eg-svrg} with 
    $p \in (0, 1]$, $\alpha \in [0, 1)$, $\gamma \in (0, 1)$ and $\tau = \frac{\gamma\sqrt{1 - \alpha}}{L}$. 
    Then, for $\theta \in [0, 1]$ and any $z^* \in \cZ^*$, 
    \begin{equation}
        c \, \distt^\theta_{\cZ^*}(z_k, w_k) \leq 2 \tau\langle F(z_{k+1/2}),\, z_{k+1/2} - z^* \rangle + 2 c \, \Phi^\theta_{z_{k+1/2}}(z_k, w_k), 
        \label{eq:rsi-in-alg}
    \end{equation} 
    where $0 < c \leq \tau\frac{\mu}{D(\cZ)}$. 
    \label{lem:rsi-in-alg}
\end{lemma}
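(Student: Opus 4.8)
The plan is to reduce the claim to a single‑point inequality at $\zeta := z_{k+1/2}$; the argument will use only $\zeta \in \cZ$, not the specific update rule or the precise value of $\tau$ (beyond $\tau>0$). Let $\zeta^* := \Pi_{\cZ^*}(\zeta)$ denote the Euclidean projection of $\zeta$ onto the solution set. First I would bound the left‑hand side: since $\distt^\theta_{\cZ^*}(z_k,w_k)$ is the minimum of $\Phi^\theta_u(z_k,w_k)$ over $u \in \cZ^*$, we have $\distt^\theta_{\cZ^*}(z_k,w_k) \le \Phi^\theta_{\zeta^*}(z_k,w_k)$. Next, for $a \in \{z_k,w_k\}$, expanding $\En{a-\zeta^*}^2 = \En{a-\zeta}^2 + 2\inp{a-\zeta}{\zeta-\zeta^*} + \En{\zeta-\zeta^*}^2$ and applying $2\inp{a-\zeta}{\zeta-\zeta^*} \le \En{a-\zeta}^2 + \En{\zeta-\zeta^*}^2$ gives $\En{a-\zeta^*}^2 - 2\En{a-\zeta}^2 \le 2\En{\zeta-\zeta^*}^2$; taking the $\theta$‑weighted combination over $a\in\{z_k,w_k\}$ yields
\[
  \Phi^\theta_{\zeta^*}(z_k,w_k) - 2\,\Phi^\theta_{\zeta}(z_k,w_k) \le 2\En{\zeta-\zeta^*}^2 .
\]
Multiplying by $c \ge 0$ and combining with the previous inequality, it remains to prove $2c\En{\zeta-\zeta^*}^2 \le 2\tau\inp{F(\zeta)}{\zeta - z^*}$.

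For this I would chain weak sharpness, the VI, and monotonicity. Applying \cref{lem:rsi} at the feasible point $\zeta$ gives $\inp{F(z^*)}{\zeta-\zeta^*} \ge \mu\En{\zeta-\zeta^*}$. Since $\zeta^* \in \cZ^* \subseteq \cZ$, evaluating the variational inequality \eqref{prob:vi} at $z^*$ with test point $\zeta^*$ gives $\inp{F(z^*)}{\zeta^*-z^*} \ge 0$; adding the two, $\inp{F(z^*)}{\zeta-z^*} \ge \mu\En{\zeta-\zeta^*}$. Monotonicity of $F$ gives $\inp{F(\zeta)-F(z^*)}{\zeta-z^*} \ge 0$, so $\inp{F(\zeta)}{\zeta-z^*} \ge \mu\En{\zeta-\zeta^*}$. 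Finally, $\En{\zeta-\zeta^*}\le D(\cZ)$ since both points lie in $\cZ$, hence $\mu\En{\zeta-\zeta^*} \ge \frac{\mu}{D(\cZ)}\En{\zeta-\zeta^*}^2 \ge \frac{c}{\tau}\En{\zeta-\zeta^*}^2$ by the hypothesis $0<c\le\tau\mu/D(\cZ)$. Multiplying through by $2\tau$ and combining with the first paragraph yields \cref{eq:rsi-in-alg}.

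I expect the main obstacle to be the bookkeeping with the two distinct solution points. The weak‑sharpness error bound of \cref{lem:rsi} is anchored at $\zeta^* = \Pi_{\cZ^*}(\zeta)$, whereas \cref{eq:rsi-in-alg} must hold for an \emph{arbitrary} $z^* \in \cZ^*$ and features $\inp{F(\zeta)}{\zeta-z^*}$ rather than $\inp{F(\zeta)}{\zeta-\zeta^*}$. This is precisely why one cannot simply bound $\distt^\theta_{\cZ^*}$ by $\Phi^\theta_{z^*}$ and stop: the distance estimate must be carried out against $\zeta^*$, and the residual ``gap'' $\inp{F(z^*)}{\zeta^*-z^*}$ between the two solutions is absorbed using the nonnegativity coming from the VI at $z^*$, after which monotonicity transfers $F(z^*)$ to $F(\zeta)$. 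A minor technical point: $D(\cZ)>0$ must be assumed (otherwise $\cZ$ is a singleton and the claim is trivial), and the stated form $\tau=\gamma\sqrt{1-\alpha}/L$ is not actually used in this lemma.
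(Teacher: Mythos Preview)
Your proposal is correct and follows essentially the same route as the paper's proof: both arguments (i) use monotonicity to pass from $F(z_{k+1/2})$ to $F(z^*)$, (ii) split $z_{k+1/2}-z^*$ through $\Pi_{\cZ^*}(z_{k+1/2})$ and drop the cross term via the VI at $z^*$, (iii) invoke \cref{lem:rsi} and boundedness of $\cZ$ to get $\frac{\mu}{D(\cZ)}\En{z_{k+1/2}-\Pi_{\cZ^*}(z_{k+1/2})}^2$, and (iv) use $\En{a+b}^2\le 2\En{a}^2+2\En{b}^2$ to relate $\En{z_{k+1/2}-\Pi_{\cZ^*}(z_{k+1/2})}^2$ to $\distt^\theta_{\cZ^*}(z_k,w_k)$ and $\Phi^\theta_{z_{k+1/2}}(z_k,w_k)$. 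Your observation that the stated stepsize is not used in this lemma is also accurate.
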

\begin{proof}
    Let $z^*$ be any point in $\cZ^*$. 
    By $\En{ a + b }^2 \leq 2 \En{ a }^2 + 2 \En{ b }^2$, we have 
    \begin{equation}
        \En{ z_{k+1/2} - z^* }^2 \geq \frac{1}{2} \En{ z_k - z^* }^2 - \En{ z_k - z_{k+1/2} }^2, 
        \label{eq:weak-sharpness-eb-left-1}
    \end{equation}
    and 
    \begin{equation}
        \En{ z_{k+1/2} - z^* }^2 \geq \frac{1}{2} \En{ w_k - z^* }^2 - \En{ w_k - z_{k+1/2} }^2. 
        \label{eq:weak-sharpness-eb-left-2}
    \end{equation}
    Summing up~\cref{eq:weak-sharpness-eb-left-1} and~\cref{eq:weak-sharpness-eb-left-2}, we attain 
    \begin{align}
        & \En{ z_{k+1/2} - z^* }^2 \nonumber \\ 
        \geq & \frac{1}{2} \left( \theta \En{ z_k - z^* }^2 + (1 - \theta) \En{ w_k - z^* }^2 \right) - \theta \En{ z_k - z_{k+1/2} }^2 - (1 - \theta) \En{ w_k - z_{k+1/2} }^2 \nonumber \\ 
        = & \frac{1}{2} \Phi^\theta_{z^*}(z_k, w_k) - \Phi^\theta_{z_{k+1/2}}(z_k, w_k) \nonumber \\ 
        \geq & \frac{1}{2} \distt^\theta_{\cZ^*}(z_k, w_k) - \Phi^\theta_{z_{k+1/2}}(z_k, w_k). 
        \label{eq:weak-sharpness-eb-left-combined}
    \end{align} 
    by the definitions of $\Phi^\theta_z(z_k, w_k)$ and $\distt^\theta_{\cZ^*}(z_k, w_k)$. 
    On the other hand, 
    \begin{align}
        & \langle F(z_{k+1/2}),\, z_{k+1/2} - z^* \rangle \nonumber \\ 
        \geq& \langle F(z^*),\, z_{k+1/2} - z^* \rangle \nonumber \\ 
        =& \langle F(z^*),\, z_{k+1/2} - \Pi_{\cZ^*}(z_{k+1/2}) \rangle + \langle F(z^*),\, \Pi_{\cZ^*}(z_{k+1/2}) - z^* \rangle \nonumber \\ 
        \geq& \langle F(z^*),\, z_{k+1/2} - \Pi_{\cZ^*}(z_{k+1/2}) \rangle \nonumber \\ 
        \geq& \mu \En{z_{k+1/2} - \Pi_{\cZ^*}(z_{k+1/2})} \nonumber \\ 
        \geq& \frac{\mu}{D(\cZ)} \En{z_{k+1/2} - \Pi_{\cZ^*}(z_{k+1/2})}^2 \tag{$\En{z_{k+1/2} - \Pi_{\cZ^*}(z_{k+1/2})} \leq D(\cZ)$}. 
    \end{align}

    Therefore, we obtain that 
    \begin{align}
        \tau \langle F(z_{k+1/2}),\, z_{k+1/2} - z^* \rangle \geq & \tau\frac{\mu}{D(\cZ)} \En{z_{k+1/2} - \Pi_{\cZ^*}(z_{k+1/2})}^2 \nonumber \\ 
        \geq & c \En{z_{k+1/2} - \Pi_{\cZ^*}(z_{k+1/2})}^2 \nonumber \\ 
        \geq & \frac{c}{2} \distt^\theta_{\cZ^*}(z_k, w_k) - c \Phi^\theta_{z_{k+1/2}}(z_k, w_k), 
    \end{align}
    where the last inequality follows from~\cref{eq:weak-sharpness-eb-left-combined}. Hence, we have~\cref{eq:rsi-in-alg}. 
\end{proof}


\subsection*{Proof of~\cref{thm:linear-convergence-for-loopless-weak-sharpness}} 

\begin{proof} 

Let $\tilde{\theta} = \frac{\alpha}{\alpha + \frac{1 - \alpha}{p}}$.  
Recall that the crucial inequality~\cref{eq:decrease-loopless-lyapunov-func-fixed-stepsize} in our proof (when $\tau = \frac{\gamma \sqrt{1 - \alpha}}{L}$) is 
\begin{align}
    & \bbE_k \left( \alpha {\En{ z_{k + 1} - z^* }}^2 + \frac{1 - \alpha}{p} {\En{ w_{k + 1} - z^* }}^2 \right) \nonumber \\ 
    \leq& \alpha {\En{ z_k - z^* }}^2 + \frac{1 - \alpha}{p} {\En{ w_k - z^* }}^2 - (1-\gamma)(1-\alpha) \En{ w_k - z_{k + 1/2} }^2 - \alpha \En{ z_k - z_{k+1/2} }^2. \nonumber
\end{align} 

Instead of using~\cref{eq:VI-inequality}  
\begin{align}
    \mathbb{E}_k\left[\langle \hat{F}(z_{k + 1/2}), \, z^* - z_{k + 1/2} \rangle\right] = - \langle F(z_{k + 1/2}), \, z_{k + 1/2} - z^* \rangle \leq - \inp{ F(z^*) }{ z_{k + 1/2} - z^* } \leq 0, \nonumber 
\end{align} 
in the derivation of~\cref{eq:decrease-loopless-lyapunov-func-fixed-stepsize}, 
we use~\cref{lem:rsi-in-alg} as follows: 
\begin{align}
    2 \tau \mathbb{E}_k\left[\langle \hat{F}(z_{k + 1/2}), \, z^* - z_{k + 1/2} \rangle\right] \leq & - 2 \tau \inp{ F(z^*) }{ z_{k + 1/2} - z^* } \nonumber \\ 
    \leq & -c \, \distt^{\tilde{\theta}}_{\cZ^*}(z_k, w_k) + 2c\, \Phi^{\tilde{\theta}}_{k+1/2}(z_k, w_k) 
\end{align} 
and attain that 
\begin{align}
    & \bbE_k \left( \alpha {\En{ z_{k + 1} - z^* }}^2 + \frac{1 - \alpha}{p} {\En{ w_{k + 1} - z^* }}^2 \right) \nonumber \\ 
    \leq& \alpha {\En{ z_k - z^* }}^2 + \frac{1 - \alpha}{p} {\En{ w_k - z^* }}^2 -c \, \distt^{\tilde{\theta}}_{\cZ^*}(z_k, w_k) \nonumber \\ 
    & \quad - (1-\gamma)(1-\alpha) \En{ w_k - z_{k + 1/2} }^2 - \alpha \En{ z_k - z_{k+1/2} }^2 + 2c\, \Phi^{\tilde{\theta}}_{k+1/2}(z_k, w_k). 
    \label{eq:crucial-inequality-applied-weak-sharpness}
\end{align} 

Recall that $\Phi^{\tilde{\theta}}_{k+1/2}(z_k, w_k) = \tilde{\theta} \En{ w_k - z_{k + 1/2} }^2 + (1 - \tilde{\theta}) \En{ z_k - z_{k+1/2} }^2$. 
Let $c \leq \frac{(1 - \gamma)(1 - \alpha)}{2}$. Given $\alpha \geq \frac{1}{2}$, we have 
$$2c (1 - \tilde{\theta}) \leq (1-\gamma)(1-\alpha)$$ 
and 
$$2c \tilde{\theta} \leq (1 - \gamma)(1 - \alpha) \tilde{\theta} \leq (1 - \alpha) \leq \alpha. $$ 
This means the last line of~\cref{eq:crucial-inequality-applied-weak-sharpness} is non-positive. 
Hence, letting $z^* = \Pi^{\tilde{\theta}}_{\cZ^*}(z_k, w_k)$, we have 
\begin{align}
    & \left(\alpha + \frac{1 - \alpha}{p}\right)\bbE_k \left( \tilde{\theta} {\En{ z_{k + 1} - z^* }}^2 + (1 - \tilde{\theta}) {\En{ w_{k + 1} - z^* }}^2 \right) \nonumber \\ 
    \leq & \left(\alpha + \frac{1 - \alpha}{p}\right) \distt^{\tilde{\theta}}_{\cZ^*}(z_k, w_k) -c \, \distt^{\tilde{\theta}}_{\cZ^*}(z_k, w_k). 
\end{align}
Therefore, we have 
\begin{align}
    \EE_k[ \distt^{\tilde{\theta}}_{\cZ^*}(z_{k+1}, w_{k+1}) ] \leq \left( 1 - \frac{c'}{2 (\alpha + \frac{1 - \alpha}{p})} \right) \distt^{\tilde{\theta}}_{\cZ^*}(z_k, w_k), 
\end{align} 
where $c' \leq 2c \leq (1 - \gamma)(1 - \alpha)$ and $c' \leq c \leq \tau \frac{\mu}{D(\cZ)}$. 
\end{proof} 

\subsection*{Proof of~\cref{crl:linear-convergence-for-loopless-weak-sharpness}} 

\begin{proof} 
    Let 
    \begin{equation}
        \rho = \frac{c}{2 \left( \alpha + \frac{1 - \alpha}{p} \right)}. \nonumber 
    \end{equation} 
    By iterating~\cref{thm:linear-convergence-for-loopless} and taking the total expectation $\bbE_0$, we have 
    \begin{equation}
        \bbE_0\left[ \distt^{\tilde{\theta}}_{\cZ^*}(z_k, w_k) \right] \leq (1 - \rho)^k \distt^{\tilde{\theta}}_{\cZ^*}(z_0, w_0) = (1 - \rho)^k \min_{z^* \in \cZ^*} \En{z_0 - z^*}^2. 
    \end{equation} 
    To reach expected $\epsilon$-accuracy as measured by the weighted distance (Lyapunov function) to the solution set, 
    we need $(1 - \rho)^k \leq \epsilon$ and equivalently, 
    \begin{equation}
        k \geq \frac{1}{\log{\frac{1}{1 - \rho}}} \log{\frac{1}{\epsilon}} \approx \frac{1}{\rho} \log{\frac{1}{\epsilon}} \quad \text{when $\rho$ is small}, 
    \end{equation} 
    where $k$ counts the number of iterations. 

    Recall that $1 - \alpha = p = \frac{2}{N}$, $\gamma = 0.99$ and $\tau = \frac{0.99\sqrt{2}}{\sqrt{N}L}$. 
    Note that 
    \begin{align} 
        \frac{1}{c} = \max\left\{ \frac{D(\cZ)}{\tau \mu}, \frac{1}{(1 - \gamma)(1 - \alpha)} \right\} \leq \frac{D(\cZ)}{\tau \mu} + \frac{1}{(1 - \gamma)(1 - \alpha)} &= \frac{D(\cZ)\sqrt{N} L}{0.99 \sqrt{2} \mu} + \frac{N}{0.01 \times 2} \nonumber \\ 
        &= \cO\left( \frac{\sqrt{N} L}{\mu} + N \right). 
    \end{align}  
    Since 
    \begin{align} 
        \frac{1}{\rho} = \frac{2(\alpha + \frac{1 - \alpha}{p})}{c} = \frac{2(2 - \frac{2}{N})}{c} \leq \cO\left( \frac{\sqrt{N} L}{\mu} + N \right). 
    \end{align} 
    Since we need $2 + p N$ evaluations of $F_\xi$ per iteration, we need $4 \times k$ evaluations of $F_\xi$ to reach expected $\epsilon$-accuracy. 
    Therefore, we attain that the time complexity is $\cO\left( \left( \frac{\sqrt{N} L}{\mu} + N \right) \log{\frac{1}{\epsilon}} \right)$. 
\end{proof}

\section{INCREASING ITERATE AVERAGING SCHEMES (IIAS) FOR SVRG-EG}

\subsection{Proofs of Lemmas}
\label{subsec:app-iias-lemmas}

\subsection*{Proof of \cref{lemma:poly-sum-bound}}

\begin{proof} 
    We prove this lemma for both the loopless SVRG-EG and the double-loop SVRG-EG in order. 
    In this section, we denote $\cZ^*$ as the solution set to~\cref{prob:generalized-vi}. 

    (i) For the loopless version of SVRG-EG, recall that we have (different from~\cref{eq:decrease-loopless-lyapunov-func-fixed-stepsize}, in deriving the following inequality we retain $\bbE_k \En{ z_{k + 1} - z_{k + 1/2} }^2$ and omit $\alpha \En{z_k - z_{k+1/2}}^2$ instead) for any $z^* \in \cZ^*$: 
    \begin{align}
        &\bbE_k \left( \alpha {\En{ z_{k + 1} - z^* }}^2 + \frac{1 - \alpha}{p} {\En{ w_{k + 1} - z^* }}^2 \right) \nonumber \\ 
        \leq& \alpha {\En{ z_k - z^* }}^2 + \frac{1 - \alpha}{p} {\En{ w_k - z^* }}^2 \nonumber \\ 
        & \hspace{80pt}- (1-\gamma)(1-\alpha) \En{ w_k - z_{k + 1/2} }^2 - (1 - \gamma) \bbE_k \En{ z_{k + 1} - z_{k + 1/2} }^2. 
    \end{align}
    
    Denote $\tilde{\Phi}_k(z) = \alpha {\En{ z_k - z }}^2 + \frac{1 - \alpha}{p} {\En{ w_k - z }}^2$ for any $z \in \dom\, g$. 
    Then, we have 
    \begin{equation}
        \bbE_k \tilde{\Phi}_{k+1}(z^*) \leq \tilde{\Phi}_k(z^*) - (1-\gamma)(1-\alpha) \En{ w_k - z_{k + 1/2} }^2 - (1 - \gamma) \bbE_k \En{ z_{k + 1} - z_{k + 1/2} }^2. 
    \end{equation}

    Taking expectation $\bbE = \bbE_0$, using the tower property of conditional expectations, we obtain that 
    \begin{equation}
        \bbE \tilde{\Phi}_{k+1}(z^*) \leq \bbE \tilde{\Phi}_k(z^*) - (1-\gamma)(1-\alpha) \bbE \En{ w_k - z_{k + 1/2} }^2 - (1 - \gamma) \bbE \En{ z_{k + 1} - z_{k + 1/2} }^2. 
        \label{eq:iias-descent-inequality-loopless}
    \end{equation} 
    
    Summing up the above inequalities over $k = 0, \ldots, K - 1$ with increasing weights $k^q$, we have 
    \begin{align} 
        & \sum_{k=0}^{K-1} k^q \left( (1 - \gamma) \bbE \En{ z_{k + 1} - z_{k + 1/2} }^2 + (1 - \alpha)(1 - \gamma) \bbE \En{ z_{k + 1/2} - w_k }^2 \right) \nonumber \\ 
        \leq & \sum_{k=0}^{K-1} k^q \left( \mathbb{E}\tilde{\Phi}_k(z^*) - \mathbb{E}\tilde{\Phi}_{k+1}(z^*) \right) \tag{\cref{eq:iias-descent-inequality-loopless}} \nonumber \\ 
        = & \sum_{k=1}^{K} \big( k^q - (k-1)^q \big) \mathbb{E} \tilde{\Phi}_k(z^*) - K^q \mathbb{E} \tilde{\Phi}_K(z^*) \nonumber \\ 
        \leq & \tilde{\Phi}_0(z^*) \sum_{k=1}^{K} \big( k^q - (k-1)^q \big) = K^q \tilde{\Phi}_0(z^*), 
    \end{align} 
    where the last inequality follows from $\bbE \tilde{\Phi}_k(z^*) \leq \tilde{\Phi}_0(z^*)$ for any $z^* \in \cZ^*$ and $k^q - (k-1)^q \geq 0$ for each $k$. 


    (ii) For the double-loop version of SVRG-EG, recall that we have (different from~\cref{eq:decrease-double-loop-sum}, in deriving the following inequality we retain $\bbE_k \En{ z^s_{k + 1} - z^s_{k + 1/2} }^2$ and omit $\alpha \En{z^s_k - z^s_{k+1/2}}^2$ instead) for any $z^* \in \cZ^*$: 
    \begin{align}
        & \EE^s_0\left[ \alpha{\En{ z^{s+1}_0 - z^* }}^2 + K(1-\alpha) {\En{ w^{s+1} - z^* }}^2 \right] \nonumber \\ 
        \leq & \alpha {\En{ z^s_0 - z^* }}^2 + K(1 - \alpha) {\En{ w^s - z^* }}^2 \nonumber \\ 
        & \hspace{80pt} - (1 - \gamma) \bbE^s_0 \left[ \sum_{k=0}^{K-1} (1 - \alpha) \En{ z^s_{k + 1/2} - w^s }^2 + \En{ z^s_{k+1} - z^s_{k+1/2} }^2 \right]. 
    \end{align} 

    Denote $\hat{\Phi}^s = \alpha \En{ z^s_0 - z }^2 + K(1 - \alpha) \En{ w^s - z }^2$. for any $z \in \dom\, g$. 
    Then, we have 
    \begin{align}
        \EE^s_0 \hat{\Phi}^{s+1}(z^*) \leq \hat{\Phi}^s(z^*) - (1 - \gamma) \bbE^s_0 \left[ \sum_{k=0}^{K-1} (1 - \alpha) \En{ z^s_{k + 1/2} - w^s }^2 + \En{ z^s_{k+1} - z^s_{k+1/2} }^2 \right]. 
    \end{align} 
    Taking expectation $\bbE = \bbE^0_0$, using the tower property of conditional expectations, we obtain that 
    \begin{align}
        \bbE \hat{\Phi}^{s+1}(z^*) \leq \bbE \hat{\Phi}^s(z^*) - (1 - \gamma) \bbE \left[ \sum_{k=0}^{K-1} (1 - \alpha) \En{ z^s_{k + 1/2} - w^s }^2 + \En{ z^s_{k+1} - z^s_{k+1/2} }^2 \right]. 
        \label{eq:iias-descent-inequality-double-loop} 
    \end{align} 

    Summing up the above inequalities over $s = 0, \ldots, S - 1$ with increasing weights $s^q$, we have 
    \begin{align}
        & (1 - \gamma) \sum_{s=0}^{S-1} s^q \sum_{k=0}^{K-1} (1 - \alpha) \bbE \En{ z^s_{k + 1/2} - w^s }^2 + \bbE \En{ z^s_{k+1} - z^s_{k+1/2} }^2 \nonumber \\ 
        \leq & \sum_{s=0}^{S-1} s^q \left( \mathbb{E}\hat{\Phi}^s(z^*) - \mathbb{E}\hat{\Phi}^{s+1}(z^*) \right) \tag{\cref{eq:iias-descent-inequality-double-loop}} \nonumber \\ 
        = & \sum_{s=1}^S \left( s^q - (s-1)^q \right) \bbE \hat{\Phi}^s(z^*) - S^q \mathbb{E} \hat{\Phi}^S(z^*) \nonumber \\ 
        \leq & \hat{\Phi}^0(z^*) \sum_{s=1}^S \left( s^q - (s-1)^q \right) = S^q \hat{\Phi}^0(z^*), 
    \end{align} 
    where the last inequality follows from $\bbE \tilde{\Phi}^s(z^*) \leq \tilde{\Phi}^0(z^*)$ for any $z^* \in \cZ^*$ and $s^q - (s-1)^q \geq 0$ for each $s$. 
\end{proof}

\subsection*{Proof of \cref{lemma:bound-exp-max-inp}}

\begin{proof} 
    We prove this lemma for both the loopless SVRG-EG and the double-loop SVRG-EG in order. 

    (i) For loopless version, we define the sequence $z_{k+1} = z_k + u_{k+1}$. 
    Then, it follows that 
    \begin{equation}
        {\En{ z_{k+1} - z }}^2 = {\En{ z_k - z }}^2 + 2\inp{u_{k+1}}{z_k - z} + {\En{ u_{k+1} }}^2. \nonumber 
    \end{equation}

    After multiplying the above identity by $k^q$ and summing the resulting equality over $k = 0, \ldots, K - 1$, we attain 
    \begin{align}
        2 \sum_{k=0}^{K-1} k^q \inp{u_{k+1}}{z - z_k} &= \sum_{k=0}^{K-1} k^q ({\En{ z_k - z }}^2 - {\En{ z_{k+1} - z }}^2) + \sum_{k=0}^{K-1} k^q {\En{ u_{k+1} }}^2 \nonumber \\ 
        &= \sum_{k=1}^K (k^q - (k-1)^q) \En{ z_k - z }^2 - K^q \En{ z_K - z }^2 + \sum_{k=0}^{K-1} k^q \En{ u_{k+1} }^2 
        \label{eq:bound-sum-inp-loopless} 
    \end{align}

    Since $z_k$ is $\mathcal{F}_k$-measurable, we have 
    $\bbE_k [k^q \inp{u_{k+1}}{z_k}] = k^q \inp{\bbE_k [u_{k+1}]}{z_k} = 0$. 
    By the tower property, $\mathbb{E}[\sum_{k=0}^{K-1} k^q \inp{u_{k+1}}{z_k}] = 0$. 
    Taking maximum over $z \in \cC$ on both sides of \eqref{eq:bound-sum-inp-loopless}, we have 
    \begin{align}
        2 \max_{z \in \cC} \sum_{k=0}^{K-1} k^q \inp{u_{k+1}}{z} & \leq \max_{z \in \cC} \sum_{k=1}^K (k^q - (k-1)^q) \En{ z_k - z }^2 + \sum_{k=0}^{K-1} k^q \En{ u_{k+1} }^2 \nonumber \\ 
        & \leq \sum_{k=1}^K (k^q - (k-1)^q) \max_{z \in \cC} \En{ z_k - z }^2 + \sum_{k=0}^{K-1} k^q \En{ u_{k+1} }^2 \nonumber \\ 
        & \leq \max_{z, z' \in \cC} \En{ z - z' }^2 \sum_{k=1}^K (k^q - (k-1)^q) + \sum_{k=0}^{K-1} k^q \En{ u_{k+1} }^2 \nonumber \\ 
        & = K^q \max_{z, z' \in \cC} \En{ z - z' }^2 + \sum_{k=0}^{K-1} k^q \En{ u_{k+1} }^2,  
    \end{align} 
    Then, by taking expectation $\bbE$ and applying the tower property of conditional expectations, we obtain~\cref{eq:bound-exp-max-inp}. 

    (ii) For double-loop version, we define the sequence $z^s_{k+1} = z^s_k + u^s_{k+1}$. 
    Then, by summing this over $k = 0, \ldots, K-1$, it follows that 
    \begin{equation}
        {\En{ z^{s+1}_0 - z }}^2 = {\En{ z^s_0 - z }}^2 + 2\sum_{k=0}^{K-1}\inp{u^s_{k+1}}{z^s_k - z} + \sum_{k=0}^{K-1}{\En{ u^s_{k+1} }}^2. \nonumber 
    \end{equation}

    After multiplying the above identity by $s^q$ and summing the resulting equality over $s = 0, \ldots, S - 1$, we attain 
    \begin{align}
        2\sum_{s=0}^{S-1} s^q \sum_{k=0}^{K-1}\inp{u^s_{k+1}}{z - z^s_k} 
        =& \sum_{s=0}^{S-1} s^q \left( {\En{ z^s_0 - z }}^2 - {\En{ z^{s+1}_0 - z }}^2 \right) + \sum_{s=0}^{S-1} s^q \sum_{k=0}^{K-1}{\En{ u^s_{k+1} }}^2 \nonumber \\ 
        =& \sum_{s=1}^{S} (s^q - (s-1)^q) \En{ z^s_0 - z }^2 - S^q \En{ z^S_0 - z }^2 + \sum_{s=0}^{S-1} s^q \sum_{k=0}^{K-1}\En{ u^s_{k+1} }^2 \nonumber \\
        \label{eq:bound-sum-inp-double-loop}
    \end{align} 

    Since $z^s_k$ is $\mathcal{F}^s_k$-measurable, we have 
    $\bbE^s_k [s^q \inp{u^s_{k+1}}{z^s_k}] = s^q \inp{\bbE^s_k [u^s_{k+1}]}{z^s_k} = 0$. 
    By the tower property, $\mathbb{E}[\sum_{s=0}^{S-1} s^q \sum_{k=0}^{K-1} \inp{u^s_{k+1}}{z^s_k}] = 0$. 
    Taking maximum over $z \in \cC$ on both sides of \eqref{eq:bound-sum-inp-double-loop}, 
    we have 
    \begin{align}
        2 \max_{z \in \cC} \sum_{s=0}^{S-1} s^q \sum_{k=0}^{K-1}\inp{u^s_{k+1}}{z - z^s_k} 
        & \leq \max_{z \in \cC} \sum_{s=1}^{S} (s^q - (s-1)^q) \En{ z^s_0 - z }^2 + \sum_{s=0}^{S-1} s^q \sum_{k=0}^{K-1}\En{ u^s_{k+1} }^2 \nonumber \\ 
        & \leq \sum_{s=1}^{S} (s^q - (s-1)^q) \max_{z \in \cC} \En{ z^s_0 - z }^2 + \sum_{s=0}^{S-1} s^q \sum_{k=0}^{K-1}\En{ u^s_{k+1} }^2 \nonumber \\ 
        & \leq \max_{z, z' \in \cC} \En{ z - z' }^2 \sum_{s=1}^{S} (s^q - (s-1)^q) + \sum_{s=0}^{S-1} s^q \sum_{k=0}^{K-1}\En{ u^s_{k+1} }^2 \nonumber \\ 
        &= S^q \max_{z, z' \in \cC} \En{ z - z' }^2 + \sum_{s=0}^{S-1} s^q \sum_{k=0}^{K-1}\En{ u^s_{k+1} }^2 
    \end{align}    
    Then, by taking expectation $\bbE$ and applying the tower property of conditional expectations, we obtain the result of this lemma for the double-loop version of SVRG-EG: 
    \begin{equation}
        \bbE \left[ \max_{z \in \cC} \sum_{s=0}^{S-1} s^q \sum_{k=0}^{K-1}\inp{u^s_{k+1}}{z} \right] \le \frac{1}{2} S^q \max_{z, z' \in \cC} {\En{ z - z' }}^2 + \frac{1}{2} \sum_{s=0}^{S-1} s^q \sum_{k=0}^{K-1}\mathbb{E}{\En{ u^s_{k+1} }}^2. 
    \end{equation} 
\end{proof}

\subsection{Loopless SVRG-EG}
\label{subsec:app-iias-loopless}

\subsection*{Proof of \cref{thm:sublinear-convergence-iias-loopless}}

\begin{proof} 
    For any $z \in \cZ = \dom\, g$, 
    let 
    \begin{equation}
        \Theta_{k+1/2}(z) = \inp{F(z_{k+1/2})}{z_{k+1/2} - z} + g(z_{k+1/2}) - g(z). \nonumber 
    \end{equation} 

    Using \cref{eq:inner-product-1,eq:inner-product-2} in \cref{eq:sum-of-two-inequalities}, we have 
    \begin{align} 
        & 2 \tau \Theta_{k+1/2}(z) + \En{z_{k+1} - z}^2 \nonumber \\ 
        \leq & \alpha \En{z_k - z}^2 + (1 - \alpha) \En{w_k - z}^2 + 2 \tau \inp*{F_{\xi_k}(w_k) - F_{\xi_k}(z_{k+1/2})}{z_{k+1} - z_{k+1/2}} \nonumber \\ 
        & \hspace{10pt} - (1 - \alpha) \En{z_{k+1/2} - w_k}^2 - \En{z_{k+1} - z_{k+1/2}}^2 + 2 \tau e_1(z, k), 
        \label{eq:big-theta-bound-inequality}
    \end{align} 
    where $e_1(z, k) = \inp*{F(z_{k+1/2}) - \hat{F}(z_{k+1/2})}{z_{k+1/2} - z}$. 

    Different from \citet{alacaoglu2022stochastic}, we remove the constraint $\alpha = 1 - p$ and derive more general formulas. 
    As prohibited to take expectation after taking maximum, we add $e_2(z, k)$ defined as 
    \begin{equation}
        e_2(z, k) = \En{w_{k+1} - z}^2 - p \En{z_{k+1} - z}^2 - (1 - p) \En{w_k - z}^2, 
    \end{equation}
    which can be eliminated immediately if we take $\mathbb{E}_{k+1/2}$ of it. 

    Recall that $\tilde{\Phi}_k(z) = \alpha \En{ z_k - z }^2 + \frac{1 - \alpha}{p} \En{ w_k - z }^2$. 
    With $e_1(z, k)$, $e_2(z, k)$, we can cast \cref{eq:big-theta-bound-inequality} as 
    \begin{align}
        & 2 \tau \Theta_{k+1/2}(z) \nonumber \\ 
        \leq & \tilde{\Phi}_k(z) - \tilde{\Phi}_{k+1}(z) + 2 \tau e_1(z, k) + \frac{1 - \alpha}{p} e_2(z, k) \nonumber \\ 
        & + 2 \tau \inp*{F_{\xi_k}(w_k) - F_{\xi_k}(z_{k+1/2})}{z_{k+1} - z_{k+1/2}} - (1 - \alpha) \En{z_{k+1/2} - w_k}^2 - \En{z_{k+1} - z_{k+1/2}}^2. 
        \label{eq:big-theta-bound-inequality-with-errors}
    \end{align} 

    Note that, by the convexity of the affine function $\inp{F(z)}{\cdot - z}$ and $g$, it follows that 
    \begin{align}
        \EE[\gap(z^K)] &= \EE\left[\gap\left(\frac{1}{Q_K}\sum_{k=0}^{K-1} k^q z_{k + 1/2}\right)\right] \nonumber \\ 
        &\leq \EE\left[ \max_{z \in \cC} \frac{1}{Q_K} \sum_{k=0}^{K-1} k^q \left( \inp{F(z)}{z_{k + 1/2} - z} + g(z_{k+1/2}) - g(z) \right) \right] \tag{Jensen's inequality} \nonumber \\ 
        &\leq \frac{1}{Q_K} \EE\left[ \max_{z \in \cC} \sum_{k=0}^{K-1} k^q \big( \inp{F(z_{k + 1/2})}{z_{k + 1/2} - z} + g(z_{k+1/2}) - g(z) \big) \right] \tag{the monotonicity of $F$} \nonumber \\ 
        &= \frac{1}{Q_K} \EE\left[ \max_{z \in \cC} \sum_{k=0}^{K-1} k^q \Theta_{k+1/2}(z)\right], 
        \label{eq:gap-bounded-by-exp-max} 
    \end{align}
    where $Q_K = \sum_{k=0}^{K-1} k^q$. 

    Then, we take maximum of both sides over $z \in \cC$, sum \eqref{eq:big-theta-bound-inequality-with-errors} over $k = 0, \ldots, K-1$ with weights $k^q$, and then take total expectation to obtain that 
    \begin{align}
        & 2 \tau s_k \EE[\gap(z^K)] \nonumber \\ 
        \leq& 2 \tau \EE\left[ \sum_{k=0}^{K-1} k^q \max_{z \in \cC} \Theta_{k+1/2}(z)\right] \tag{\cref{eq:gap-bounded-by-exp-max}} \nonumber \\ 
        \leq& K^q \left( \alpha + \frac{1-\alpha}{p} \right) \max_{z \in \cC} \En{z_0 - z}^2 + \EE\left[ \max_{z \in \cC} \sum_{k=0}^{K-1} 2 \tau k^q e_1(z, k) \right] + \EE\left[ \max_{z \in \cC} \sum_{k=0}^{K-1} \frac{1 - \alpha}{p} k^q e_2(z, k) \right] \nonumber \\ 
        & \quad + 2 \tau \sum_{k=0}^{K-1} k^q \EE\left[\inp*{F_{\xi_k}(w_k) - F_{\xi_k}(z_{k+1/2})}{z_{k+1} - z_{k+1/2}}\right] \nonumber \\ 
        & \quad - \sum_{k=0}^{K-1} k^q \EE\left[ (1 - \alpha) \En{z_{k+1/2} - w_k}^2 + \En{z_{k+1} - z_{k+1/2}}^2\right], 
    \end{align} 
    where in the second inequality we use 
    \begin{align}
        \sum_{k=0}^{K-1} k^q \left( \bbE \tilde{\Phi}_k(z) - \bbE \tilde{\Phi}_{k+1}(z) \right) = \sum_{k=1}^{K} \left( k^q - (k-1)^q \right) \bbE \tilde{\Phi}_k(z) - K^q \bbE \tilde{\Phi}_K(z) \leq K^q \tilde{\Phi}_0(z). 
    \end{align}

    By \eqref{eq:bound-by-splited-two-norms} and the tower property, we can eliminate the second last two lines. 
    Then, we have 
    \begin{align}
        2 \tau Q_K \EE[\gap(z^K)] \leq & K^q \big( \alpha + \frac{1-\alpha}{p} \big) \max_{z \in \cC} \En{z_0 - z}^2 \nonumber \\ 
        & + \EE\left[ \max_{z \in \cC} \sum_{k=0}^{K-1} 2 \tau k^q e_1(z, k) \right] + \EE\left[ \max_{z \in \cC} \sum_{k=0}^{K-1} \frac{1 - \alpha}{p} k^q e_2(z, k) \right]. 
        \label{eq:bound-with-errors}
    \end{align}

    As in~\citet{alacaoglu2022stochastic}, we next derive upper bounds for the error terms. 
    First, for $e_1(z, k)$, let $\mathcal{F}_k = \sigma(\xi_0, \cdots, \xi_{k-1}, w_k)$ and 
    $u_{k+1} = 2 \tau \left( [F_{\xi_k}(z_{k+1/2}) - F_{\xi_k}(w_k)] - [F(z_{k+1/2}) - F(w_k)] \right)$, 
    where for definition we set $\mathcal{F}_0 = \sigma(\xi_0, \xi_{-1}, w_0) = \sigma(\xi_0)$. 
    With this, we obtain the bound 
    \begin{align}
        \EE_k \left[ \max_{z \in \cC} \sum_{k=0}^{K-1} 2 \tau k^p e_1(z, k) \right] &= \EE_k \left[ \max_{z \in \cC} \sum_{k=0}^{K-1} k^q \inp{u_{k+1}}{z} \right] - \EE_k \left[ \sum_{k=0}^{K-1} k^q \inp{u_{k+1}}{z_{k+1/2}} \right] \nonumber \\ 
        &= \EE_k \left[ \max_{z \in \cC} \sum_{k=0}^{K-1} k^q \inp{u_{k+1}}{z} \right] \tag{$z_{k+1/2}$ is $\mathcal{F}_k$-measurable and $\EE_k[u_{k+1}] = 0$} \nonumber \\ 
        &\leq \frac{1}{2} K^q \max_{z, z' \in \cC} {\En{ z - z' }}^2 + \frac{1}{2} \sum_{k=0}^{K-1} k^q \EE_k {\En{ u_{k+1} }}^2 \tag{\cref{lemma:bound-exp-max-inp}} \nonumber \\ 
        &\leq \frac{1}{2} K^q \max_{z, z' \in \cC} \En{ z - z' }^2 + 2 \tau^2 L^2 \sum_{k=0}^{K-1} k^q \En{z_{k+1/2} - w_k}^2, 
        \label{} 
    \end{align} 
    where the last inequality holds because 
    \begin{align}
        \EE_k \En{u_{k+1}}^2 \leq 4\tau^2 \EE_k\En{F_{\xi_k}(z_{k+1/2}) - F_{\xi_k}(w_k)}^2 \leq 4 \tau^2 L^2 \En{z_{k+1/2} - w_k}^2, 
    \end{align}
    where we use $\EE_k \En{X - \EE X}^2 = \EE_k \En{X}^2 - (\EE_k \En{X})^2 \leq \EE_k \En{X}^2$ for some random variable $X$, and the $L$-Lipschitz continuity of $F_\xi$. 
    Taking expectation $\bbE$ and applying the tower property of conditional expectations, we obtain that 
    \begin{equation}
        \bbE \left[ \max_{z \in \cC} \sum_{k=0}^{K-1} 2 \tau k^p e_1(z, k) \right] \leq \frac{1}{2} K^q \max_{z, z' \in \cC} \En{ z - z' }^2 + 2 \tau^2 L^2 \sum_{k=0}^{K-1} k^q \bbE \En{z_{k+1/2} - w_k}^2. 
        \label{eq:bound-error-1}
    \end{equation}

    Secondly, for $e_2(z, k)$, we let $u'_{k+1} = p z_{k+1} + (1 - p) w_k - w_{k+1}$. 
    Note that 
    \begin{align}
        e_2(z, k) &= \En{w_{k+1} - z}^2 - p \En{z_{k+1} - z}^2 - (1 - p) \En{w_k - z}^2 \nonumber \\ 
        &= 2\inp{p z_{k+1} + (1 - p) w_k - w_{k+1}}{z} - p\En{z_{k+1}}^2 - (1 - p)\En{w_k}^2 + \En{w_{k+1}}^2 \nonumber \\ 
        &= 2\inp{u'_{k+1}}{z} - (p\En{z_{k+1}}^2 + (1 - p)\En{w_k}^2 - \En{w_{k+1}}^2). 
    \end{align}

    Since $\EE\left[ \EE_{k+1/2}\left[ \En{w_{k+1}}^2 - p \En{z_{k+1}}^2 - (1-p)\En{w_k}^2 \right] \right] = 0$, we can obtain the bound 
    \begin{align}
        & \EE\left[ \max_{z \in \cC} \sum_{k=0}^{K-1} \frac{1 - \alpha}{p} k^q e_2(z, k) \right] \nonumber \\ 
        =& \frac{2(1 - \alpha)}{p} \EE\left[ \max_{z \in \cC} \sum_{k=0}^{K-1} k^q \inp{u'_{k+1}}{z} \right] \nonumber \\
        \leq& \frac{1 - \alpha}{p} K^q \max_{z, z' \in \cC} {\En{ z - z' }}^2 + \frac{1 - \alpha}{p} \sum_{k=0}^{K-1} k^q \EE {\En{ u_{k+1} }}^2 \tag{\cref{lemma:bound-exp-max-inp}} \nonumber \\ 
        =& \frac{1 - \alpha}{p} K^q \max_{z, z' \in \cC} {\En{ z - z' }}^2 + (1 - \alpha)(1-p) \sum_{k=0}^{K-1} k^q \EE\En{z_{k+1} - w_k}^2, 
        \label{}
    \end{align}
    where the second equality holds because 
    \begin{align}
        \EE_{k+1/2}\En{u'_{k+1}}^2 &= \EE_{k+1/2}\En{\EE_{k+1/2}[w_{k+1}] - w_{k+1}}^2 \nonumber \\ 
        &= \EE_{k+1/2}\En{w_{k+1}}^2 - \En{\EE_{k+1/2}[w_{k+1}]}^2 \nonumber \\ 
        &= p\En{z_{k+1}}^2 + (1-p)\En{w_k}^2 - \En{p z_{k+1} + (1-p) w_k}^2 \nonumber \\ 
        &= p(1-p) \En{z_{k+1} - w_k}^2, 
    \end{align}
    and thus $\bbE \En{u'_{k+1}}^2 \leq p(1-p) \bbE \En{z_{k+1} - w_k}^2$ by the tower property of conditional expectations, where in the second equality we use $\EE\En{X - \EE X}^2 = \EE\En{X}^2 - \En{\EE X}^2$ for some random variable $X$, and in the last equality we use the identity $\En{\alpha a + (1 - \alpha) b}^2 = \alpha \En{a}^2 + (1 - \alpha) \En{b}^2 - \alpha(1 - \alpha)\En{a - b}^2$. 
    Taking expectation $\bbE$ and applying the tower property of conditional expectations, we obtain that 
    \begin{equation}
        \EE\left[ \max_{z \in \cC} \sum_{k=0}^{K-1} \frac{1 - \alpha}{p} k^q e_2(z, k) \right] \leq \frac{1 - \alpha}{p} K^q \max_{z, z' \in \cC} {\En{ z - z' }}^2 + (1 - \alpha)(1-p) \sum_{k=0}^{K-1} k^q \EE\En{z_{k+1} - w_k}^2. 
        \label{eq:bound-error-2}
    \end{equation} 

    Combining \cref{eq:bound-with-errors,eq:bound-error-1,eq:bound-error-2}, we finally arrive at 
    \begin{align}
        & 2 \tau Q_K \EE[\gap(z^K)] \nonumber \\ 
        \leq & K^q \big( \alpha + \frac{1-\alpha}{p} \big) \max_{z \in \cC} \En{z_0 - z}^2 + \EE\left[ \max_{z \in \cC} \sum_{k=0}^{K-1} 2 \tau k^q e_1(z, k) \right] + \EE\left[ \max_{z \in \cC} \sum_{k=0}^{K-1} \frac{1 - \alpha}{p} k^q e_2(z, k) \right] \tag{\cref{eq:bound-with-errors}} \nonumber \\ 
        \leq & K^q \big( \alpha + \frac{1-\alpha}{p} \big) \max_{z \in \cC} \En{z_0 - z}^2 + \frac{1}{2} K^q \max_{z, z' \in \cC} {\En{ z - z' }}^2 + 2 \tau^2 L^2 \sum_{k=0}^{K-1} k^q \EE \En{z_{k+1/2} - w_k}^2 \nonumber \\ 
        & \hspace{60pt} + \frac{1 - \alpha}{p} K^q \max_{z, z' \in \cC} {\En{ z - z' }}^2 + (1-\alpha)(1-p) \sum_{k=0}^{K-1} k^q \EE\En{z_{k+1} - w_k}^2 \tag{\cref{eq:bound-error-1,eq:bound-error-2}} \nonumber \\ 
        \leq & 2 \big( \alpha + \frac{1-\alpha}{p} \big) K^q \max_{z, z' \in \cC} {\En{ z - z' }}^2 + 2 (1 - \alpha) \gamma^2 \sum_{k=0}^{K-1} k^q \EE \En{z_{k+1/2} - w_k}^2 \nonumber \\ 
        & \hspace{20pt} + 2(1-\alpha)(1-p) \sum_{k=0}^{K-1} k^q \EE\En{z_{k+1} - z_{k+1/2}}^2 + 2(1-\alpha)(1-p) \sum_{k=0}^{K-1} k^q \EE \En{z_{k+1/2} - w_k}^2 \tag{$\alpha \geq \frac{1}{2}$, $\tau = \frac{\gamma\sqrt{1 - \alpha}}{L}$, and $\En{a + b}^2 \leq \En{a}^2 + \En{b}^2$} \nonumber \\
        \leq & 2 \big( \alpha + \frac{1-\alpha}{p} \big) K^q \max_{z, z' \in \cC} {\En{ z - z' }}^2 \nonumber \\ 
        & \hspace{80pt} + 2(\gamma^2 + 1) \sum_{k=0}^{K-1} k^q (\mathbb{E}_k[{\En{ z_{k + 1} - z_{k + 1/2} }}^2] + (1 - \alpha){\En{ z_{k + 1/2} - w_k }}^2 ) \nonumber \\ 
        \leq & 2 \big( \alpha + \frac{1-\alpha}{p} \big) K^q \max_{z, z' \in \cC} {\En{ z - z' }}^2 + \frac{2(\gamma^2 + 1)}{1 - \gamma} \big( \alpha + \frac{1-\alpha}{p} \big) K^q \max_{z \in \cC} \En{z_0 - z}^2, 
    \end{align} 
    where in the last inequality we use~\cref{lemma:poly-sum-bound}. 

    Let $z_0 \in \cC$. Then, 
    \begin{equation} 
        2 \tau Q_K \EE[\gap(z^K)] \leq \left( 2 + \frac{2(\gamma^2 + 1)}{1 - \gamma} \right) K^q \left( \alpha + \frac{1-\alpha}{p} \right) \max_{z, z' \in \cC} {\En{ z - z' }}^2. 
        \label{eq:gap-bounded-by-constant} 
    \end{equation} 

    Also, we have, 
    \begin{align}
        Q_K = \sum_{k=0}^{K-1} k^q \geq \int_0^{K-1} x^q dx = \frac{(K-1)^{q+1}}{q+1} &= \frac{(K-1)K^q}{(q+1)} \left(\frac{K-1}{K}\right)^q \nonumber \\ 
        &\geq \frac{(K-1)K^q}{(q+1)} \prod_{m = 1}^q \left(\frac{K-m}{K-m+1}\right) \nonumber \\
        &= \frac{(K-1)K^q}{(q+1)} \cdot \frac{K-q}{K} \ge \frac{(K-q-1)K^q}{(q+1)}. 
        \label{eq:bound-Q-K} 
    \end{align}     
    By combining~\cref{eq:gap-bounded-by-constant,eq:bound-Q-K}, the $O(1/T)$ convergence rate follows. 
\end{proof} 

\subsection*{Proof of \cref{crl:sublinear-convergence-iias-loopless}} 

\begin{proof} 
    To reach expected $\epsilon$-accuracy as measured by the gap function to the solution set, 
    we need 
    \begin{equation}
        K - q - 1 \geq \frac{\frac{(q+1)}{\tau} \big( 1 + \frac{\gamma^2 + 1}{1-\gamma} \big) \big( \alpha + \frac{1 - \alpha}{p} \big) \max_{z, z' \in \cC} \En{z - z'}^2}{\epsilon}, 
    \end{equation} 
    where $K$ counts the number of iterations. 
    Recall that $q = 1, 2, 3, \ldots$, $1 - \alpha = p = \frac{2}{N}$, $\gamma = 0.99$ and $\tau = \frac{0.99\sqrt{2}}{\sqrt{N}L}$. 
    Since 
    \begin{align} 
        \frac{(q+1)}{\tau} \big( 1 + \frac{\gamma^2 + 1}{1-\gamma} \big) \big( \alpha + \frac{1 - \alpha}{p} \big) = \frac{(q+1)\sqrt{N}L}{0.99 \sqrt{2}} \big( 1 + \frac{0.99^2 + 1}{0.01} \big) \big( 2 - \frac{2}{N} \big) = \cO(\sqrt{N}L). 
    \end{align} 
    Since we need $2 + p N$ evaluations of $F_\xi$ per iteration, we need $4 \times K$ evaluations of $F_\xi$ to reach expected $\epsilon$-accuracy. 
    Therefore, we attain that the time complexity is $\cO\left( \frac{\sqrt{N}L}{\epsilon} \right)$. 
\end{proof} 



\subsection{Double-loop SVRG-EG}
\label{subsec:app-iias-double-loop} 

For double-loop SVRG-EG, we prove the following $O(1/T)$ convergence rate. 
\begin{theorem}
    Let Assumptions $1$-$4$ hold, $K \in \mathbb{N}$, $\alpha \in [\frac{1}{2}, 1)$, and $\tau = \frac{\sqrt{1-\alpha}}{L}\gamma$, for $\gamma \in (0, 1)$. Then, for $q \in \{0, 1, 2, \ldots\}$ and $z^S = \frac{1}{K Q_S} \sum_{s=0}^{S-1} s^q \sum_{k=0}^{K-1} z^s_{k+1/2}$ where $Q_S = \sum_{s=0}^{S-1} s^q$, it follows that 
    \begin{equation}
        \EE\left[ \gap(z^S) \right] \le \frac{(q+1)}{2 \tau K(S-q-1)}D_2 \max_{z, z'} \En{z - z'}^2, 
    \end{equation} 
    where $D_2 = 2 \left( 1 + \frac{\gamma^2}{1 - \gamma} \right) \left( \alpha + K(1 - \alpha) \right)$. 
    \label{thm:sublinear-convergence-iias-double-loop}
\end{theorem}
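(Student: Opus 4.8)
The plan is to follow the proof of \cref{thm:sublinear-convergence-iias-loopless} almost verbatim, with the outer-loop (epoch) index $s$ playing the role of the loopless iteration index $k$ and with one extra inner-loop telescoping step handled by Jensen's inequality. Fix any $z \in \dom g$. For each epoch $s$ and inner step $k$, the first move is to repeat the prox argument behind \cref{eq:big-theta-bound-inequality,eq:big-theta-bound-inequality-with-errors}: combine the two prox optimality inequalities for $z^s_{k+1/2}$ and $z^s_{k+1}$, apply $2\inp{a}{b} = \En{a+b}^2-\En{a}^2-\En{b}^2$ with $\bar z^s_k = \alpha z^s_k+(1-\alpha)w^s$, and isolate the stochastic error $e^s_1(z,k) := \inp{F(z^s_{k+1/2})-\hat F(z^s_{k+1/2})}{z^s_{k+1/2}-z}$ to obtain, writing $\Theta^s_{k+1/2}(z) := \inp{F(z^s_{k+1/2})}{z^s_{k+1/2}-z}+g(z^s_{k+1/2})-g(z)$,
\begin{align*}
 2\tau\Theta^s_{k+1/2}(z) + \En{z^s_{k+1}-z}^2 &\le \alpha\En{z^s_k-z}^2 + (1-\alpha)\En{w^s-z}^2 + 2\tau e^s_1(z,k) \\
 &\quad + 2\tau\inp{F_{\xi^s_k}(w^s)-F_{\xi^s_k}(z^s_{k+1/2})}{z^s_{k+1}-z^s_{k+1/2}} \\
 &\quad - (1-\alpha)\En{z^s_{k+1/2}-w^s}^2 - \En{z^s_{k+1}-z^s_{k+1/2}}^2 .
\end{align*}
There is no snapshot-randomness term $e_2$ here, because the update $w^{s+1} = \frac{1}{K}\sum_{k=1}^{K}z^s_k$ is deterministic given the inner-loop iterates; this is ultimately why $D_2$ carries $\gamma^2$ in place of the $\gamma^2+1$ in $D_1$.

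Summing this over $k=0,\dots,K-1$ and, using $z^{s+1}_0 = z^s_K$ and Jensen's inequality $K(1-\alpha)\En{w^{s+1}-z}^2 \le (1-\alpha)\sum_{k=1}^K\En{z^s_k-z}^2$ exactly as in the step leading to \cref{eq:double-loop-jensen-inequality}, I would fold the $\En{z^s_{k+1}-z}^2$ and $\alpha\En{z^s_k-z}^2$ terms into the Lyapunov function $\hat{\Phi}^s(z) := \alpha\En{z^s_0-z}^2 + K(1-\alpha)\En{w^s-z}^2$, giving a per-epoch descent inequality
\begin{align*}
 2\tau\sum_{k=0}^{K-1}\Theta^s_{k+1/2}(z) + \hat{\Phi}^{s+1}(z) &\le \hat{\Phi}^s(z) + 2\tau\sum_{k=0}^{K-1}\inp{F_{\xi^s_k}(w^s)-F_{\xi^s_k}(z^s_{k+1/2})}{z^s_{k+1}-z^s_{k+1/2}} \\
 &\quad - \sum_{k=0}^{K-1}\Big((1-\alpha)\En{z^s_{k+1/2}-w^s}^2 + \En{z^s_{k+1}-z^s_{k+1/2}}^2\Big) + 2\tau\sum_{k=0}^{K-1}e^s_1(z,k).
\end{align*}
Multiplying by $s^q$, summing over $s=0,\dots,S-1$, taking $\max_{z\in\cC}$, and taking total expectation, the standard gap-to-maximum reduction (as in \cref{eq:gap-bounded-by-exp-max}) gives $2\tau K Q_S\,\EE[\gap(z^S)]\le$ (expected maximum of the right-hand side). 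On that right-hand side: the weighted telescoping $\sum_s s^q(\hat{\Phi}^s(z)-\hat{\Phi}^{s+1}(z))$ is bounded, uniformly in $z\in\cC$, by a term of order $S^q(\alpha+K(1-\alpha))\max_{z,z'\in\cC}\En{z-z'}^2$; the inner-product terms vanish in expectation after the Cauchy--Schwarz and $L$-Lipschitz-in-mean bound of \cref{eq:bound-by-splited-two-norms} and the tower property; and the martingale error $\sum_s s^q\sum_k e^s_1(z,k)$ is controlled by the double-loop form of \cref{lemma:bound-exp-max-inp} applied with $u^s_{k+1} = 2\tau([F_{\xi^s_k}(z^s_{k+1/2})-F_{\xi^s_k}(w^s)]-[F(z^s_{k+1/2})-F(w^s)])$, whose conditional second moment is at most $4\tau^2L^2\En{z^s_{k+1/2}-w^s}^2 = 4\gamma^2(1-\alpha)\En{z^s_{k+1/2}-w^s}^2$.

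Collecting terms, the residual quadratic sums $\sum_s s^q\sum_k\big((1-\alpha)\En{z^s_{k+1/2}-w^s}^2 + \En{z^s_{k+1}-z^s_{k+1/2}}^2\big)$, which enter with a net coefficient of order $\gamma^2$, are dominated by the double-loop form of \cref{lemma:poly-sum-bound}, i.e.\ by $\frac{S^q(\alpha+K(1-\alpha))}{1-\gamma}\En{z_0-z^*}^2$; what is left is $2\tau K Q_S\,\EE[\gap(z^S)]\le D_2\,S^q\max_{z,z'\in\cC}\En{z-z'}^2$ with $D_2 = 2\big(1+\frac{\gamma^2}{1-\gamma}\big)(\alpha+K(1-\alpha))$. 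Dividing by $2\tau K Q_S$ and using $Q_S = \sum_{s=0}^{S-1}s^q \ge \frac{(S-q-1)S^q}{q+1}$ (the computation of \cref{eq:bound-Q-K}) then yields the stated bound, which is $O(1/T)$. I expect the main obstacle to be the bookkeeping in the second step: assembling the per-epoch descent cleanly from the $K$ inner-loop inequalities, routing the $\sum_k\En{z^s_{k+1}-z}^2$ mass into $\hat{\Phi}^{s+1}(z)$ via Jensen and $z^{s+1}_0=z^s_K$, and checking that the $s^q$-weighted telescoping — which does not collapse to a single term because $s^q$ is non-constant — stays $O(S^q)$ uniformly over $z\in\cC$; a secondary subtlety is invoking \cref{lemma:bound-exp-max-inp} with the correct filtration $\mathcal{F}^s_k = \sigma(\xi^0_0,\dots,\xi^s_{k-1})$ so that $\EE[u^s_{k+1}\mid\mathcal{F}^s_k]=0$. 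Beyond these, the remaining estimates are the same Cauchy--Schwarz, $L$-Lipschitz, and $2ab\le a^2+b^2$ manipulations used in the loopless theorem.
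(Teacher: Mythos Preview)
Your proposal is correct and follows essentially the same route as the paper's own proof: establish the per-inner-step inequality for $\Theta^s_{k+1/2}(z)$, telescope over $k$ with Jensen's inequality on $w^{s+1}$ to form the epoch-level Lyapunov function $\hat{\Phi}^s(z)$, weight by $s^q$ and sum over epochs, control the martingale error via the double-loop form of \cref{lemma:bound-exp-max-inp} and the residual sums via the double-loop form of \cref{lemma:poly-sum-bound}, and finish with the $Q_S$ lower bound. Your observation that there is no $e_2$ term (since $w^{s+1}$ is a deterministic average) and that this accounts for the $\gamma^2$ versus $\gamma^2+1$ in the constant is exactly right and matches the paper.
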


\begin{corollary}
    Set $K=\frac{N}{2}$, $\alpha = 1 - \frac{1}{K}$ and $\gamma = 0.99$ in~\cref{alg:eg-svrg-double-loop}. 
    The time complexity to reach $\epsilon$-accuracy on the gap function is 
    $\mathcal{O}\left( N + \frac{L\sqrt{N}}{\epsilon} \right)$. 
    \label{crl:sublinear-convergence-iias-double-loop}
\end{corollary}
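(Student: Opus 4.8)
The plan is to read the corollary off from \cref{thm:sublinear-convergence-iias-double-loop} by substituting the prescribed parameters and then counting stochastic oracle calls. First I would rearrange the convergence bound
\[
\EE[\gap(z^S)] \le \frac{(q+1)}{2\tau K(S-q-1)}\, D_2 \max_{z,z'\in\cC} \En{z-z'}^2
\]
to see that, for any fixed $q$, it suffices to run
\[
S \;\ge\; q+1 + \frac{(q+1)\, D_2 \max_{z,z'\in\cC}\En{z-z'}^2}{2\tau K\,\epsilon}
\]
outer epochs to guarantee $\EE[\gap(z^S)]\le\epsilon$.

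Next I would estimate each factor under $K=N/2$, $\alpha = 1-\tfrac1K$, $\gamma=0.99$, and $\tau=\tfrac{\sqrt{1-\alpha}}{L}\gamma$. Since $1-\alpha=1/K$ we have $K(1-\alpha)=1$, so $\alpha+K(1-\alpha)=2-1/K=\Theta(1)$; together with $1+\tfrac{\gamma^2}{1-\gamma}=\Theta(1)$ for the fixed $\gamma=0.99$, this gives $D_2=\Theta(1)$. Also $\tau=\gamma/(L\sqrt K)$, hence $\tfrac{1}{\tau K}=\tfrac{L}{\gamma\sqrt K}=\Theta(L/\sqrt N)$. Treating $q$ and $\max_{z,z'\in\cC}\En{z-z'}^2$ as constants, the displayed requirement becomes $S=\mathcal{O}\!\big(1+\tfrac{L}{\sqrt N\,\epsilon}\big)$.

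Finally I would account for the per-epoch computational cost of \cref{alg:eg-svrg-double-loop}: each outer epoch computes a single full gradient $F(w^s)$ (equivalent to $N$ oracle calls), reused across all $K$ inner iterations, plus $2$ stochastic oracle calls $F_{\xi^s_k}(\cdot)$ per inner iteration, for a total of $N+2K = 2N$ oracle calls per epoch when $K=N/2$. Multiplying by the number of epochs $S$ gives $2N\cdot S = \mathcal{O}\!\big(N + \tfrac{L\sqrt N}{\epsilon}\big)$, which is the claimed complexity; the $\mathcal{O}(N)$ term is the startup cost of the $q+1$ epochs forced by the bound. The only thing to watch is this bookkeeping — in particular that the double-loop structure charges one full gradient per \emph{epoch}, amortized over $K=\Theta(N)$ inner steps, so the per-epoch cost is $\Theta(N)$ rather than $\Theta(N^2)$; no estimate beyond \cref{thm:sublinear-convergence-iias-double-loop} is required.
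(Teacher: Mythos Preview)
Your proposal is correct and follows essentially the same route as the paper: substitute the parameters into the bound of \cref{thm:sublinear-convergence-iias-double-loop}, observe that $D_2=\Theta(1)$ and $1/(\tau K)=\Theta(L/\sqrt N)$, solve for $S$, and multiply by the per-epoch cost $N+2K=2N$. If anything you are slightly more careful than the paper, which omits the explicit $+N$ startup term in its final line even though it appears in the corollary statement; your remark that the $q+1$ forced epochs contribute the $\mathcal{O}(N)$ term is exactly right.
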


\subsection*{Proof of \cref{thm:sublinear-convergence-iias-double-loop}} 

\begin{proof} 
    For any $z \in \cZ = \dom\, g$, 
    let 
    \begin{equation}
        \Theta^s_{k+1/2}(z) = \inp{F(z^s_{k+1/2})}{z^s_{k+1/2} - z} + g(z^s_{k+1/2}) - g(z). \nonumber 
    \end{equation} 
    Analogous to~\cref{eq:big-theta-bound-inequality}, for the double-loop SVRG-EG, we have 
    \begin{align} 
        & 2 \tau \Theta^s_{k+1/2}(z) + \En{z^s_{k+1} - z}^2 \nonumber \\ 
        \leq & \alpha \En{z^s_k - z}^2 + (1 - \alpha) \En{w^s - z}^2 + 2 \tau \inp*{F_{\xi_k}(w^s) - F_{\xi_k}(z^s_{k+1/2})}{z^s_{k+1} - z^s_{k+1/2}} \nonumber \\ 
        & - (1 - \alpha) \En{z^s_{k+1/2} - w^s}^2 - \En{z^s_{k+1} - z^s_{k+1/2}}^2 + 2 \tau e(z, s, k), 
    \end{align} 
    where $e(z, s, k) = \inp*{F(z^s_{k+1/2}) - \hat{F}(z^s_{k+1/2})}{z^s_{k+1/2} - z}$. 

    Summing up the above inequalities over $k = 0, 1, \ldots, K-1$, we have 
    \begin{align} 
        &2 \tau \sum_{k=0}^{K-1} \Theta^s_{k+1/2}(z) + \alpha \En{z^{s+1}_0 - z}^2 + (1-\alpha)\sum_{k=0}^{K-1} \En{z^s_{k+1} - z}^2 \nonumber \\ 
        \leq& \alpha \En{z^s_0 - z}^2 + K (1-\alpha) \En{w^s - z}^2 + 2 \tau \sum_{k=0}^{K-1} e(z, s, k) \nonumber \\ 
        & + 2 \tau \sum_{k=0}^{K-1} \inp*{F_{\xi_k}(w^s) - F_{\xi_k}(z^s_{k+1/2})}{z^s_{k+1} - z^s_{k+1/2}} \nonumber \\ 
        & - (1 - \alpha) \sum_{k=0}^{K-1} \En{z^s_{k+1/2} - w^s}^2 - \sum_{k=0}^{K-1} \En{z^s_{k+1} - z^s_{k+1/2}}^2, 
    \end{align}
    where we rearrange terms and use the definition $z^{s+1}_0 = z^s_K$. 
    By Jensen's inequality and the definition of $w^{s+1}$, it follows that 
    \begin{align}
        &2 \tau \sum_{k=0}^{K-1} \Theta^s_{k+1/2}(z) + \alpha \En{z^{s+1}_0 - z}^2 + K(1-\alpha) \En{w^{s+1} - z}^2 \nonumber \\ 
        \leq& \alpha \En{z^s_0 - z}^2 + K (1-\alpha) \En{w^s - z}^2 + 2 \tau \sum_{k=0}^{K-1} e(z, s, k) \nonumber \\ 
        & + 2 \tau \sum_{k=0}^{K-1} \inp*{F_{\xi_k}(w^s) - F_{\xi_k}(z^s_{k+1/2})}{z^s_{k+1} - z^s_{k+1/2}} \nonumber \\ 
        & - (1 - \alpha) \sum_{k=0}^{K-1} \En{z^s_{k+1/2} - w^s}^2 - \sum_{k=0}^{K-1} \En{z^s_{k+1} - z^s_{k+1/2}}^2. 
        \label{eq:big-theta-bound-inequality-double-loop} 
    \end{align} 

    Recall that $\hat{\Phi}^s(z) =  \alpha \En{z^s_0 - z}^2 + K(1-\alpha) \En{w^s - z}^2$. 
    With this, we can cast~\cref{eq:big-theta-bound-inequality-double-loop} as 
    \begin{align}
        2 \tau \sum_{k=0}^{K-1} \Theta^s_{k+1/2}(z) 
        \leq \hat{\Phi}^s(z) - \hat{\Phi}^{s+1}(z) + 2 \tau \sum_{k=0}^{K-1} e(z, s, k) + \delta^s_k, 
        \label{} 
    \end{align} 
    where $\delta^s_k = 2 \tau \sum_{k=0}^{K-1} \inp*{F_{\xi_k}(w^s) - F_{\xi_k}(z^s_{k+1/2})}{z^s_{k+1} - z^s_{k+1/2}} - (1 - \alpha) \sum_{k=0}^{K-1} \En{z^s_{k+1/2} - w^s}^2 - \sum_{k=0}^{K-1} \En{z^s_{k+1} - z^s_{k+1/2}}^2$. 
    By the monotonicity of $F$, Jensen's inequality, the convexity of $g$, it follows that 
    \begin{align}
        \frac{1}{K} \sum_{k=0}^{K-1} \Theta^s_{k+1/2}(z) &= \frac{1}{K} \sum_{k=0}^{K-1} \inp{F(z^s_{k+1/2})}{z^s_{k+1/2} - z} + g(z^s_{k+1/2}) - g(z) \nonumber \\ 
        &\geq \frac{1}{K} \sum_{k=0}^{K-1} \inp{F(z)}{z^s_{k+1/2} - z} + g(z^s_{k+1/2}) - g(z) \tag{the monotonicity of $F$} \nonumber \\ 
        &\geq \inp{F(z)}{\frac{1}{K} \sum_{k=0}^{K-1} z^s_{k+1/2} - z} + g\left(\frac{1}{K} \sum_{k=0}^{K-1} z^s_{k+1/2}\right) - g(z), \tag{Jensen's inequality}
    \end{align}
    where the last inequality follows from the convexity of the affine function $\inp{F(z)}{\cdot - z}$ and $g$. 
    Denote 
    \begin{equation}
        \Theta^s(z) := \inp{F(z)}{\frac{1}{K} \sum_{k=0}^{K-1} z^s_{k+1/2} - z} + g(\frac{1}{K} \sum_{k=0}^{K-1} z^s_{k+1/2}) - g(z). \nonumber \\ 
    \end{equation} 
    Then, we have 
    \begin{equation}
        2 \tau K \Theta^s(z) \leq \hat{\Phi}^s(z) - \hat{\Phi}^{s+1}(z) + 2 \tau \sum_{k=0}^{K-1} e(z, s, k) + \delta^s_k. 
        \label{eq:big-theta-bound-inequality-with-errors-double-loop}
    \end{equation} 

    Note that, by , it follows that 
    \begin{align}
        \EE[\gap(z^S)] &= \EE\left[\gap\left(\frac{1}{Q_S}\sum_{s=0}^{S-1} s^q \left( \frac{1}{K} \sum_{k=0}^{K-1} z^s_{k+1/2} \right) \right)\right] \nonumber \\ 
        &\leq \EE\left[ \max_{z \in \cC} \frac{1}{Q_S} \sum_{s=0}^{S-1} s^q \big( \inp{F(z)}{\frac{1}{K} \sum_{k=0}^{K-1} z^s_{k+1/2} - z} + g(\frac{1}{K} \sum_{k=0}^{K-1} z^s_{k+1/2}) - g(z) \big) \right] \nonumber \\ 
        &= \frac{1}{Q_S} \EE\left[ \max_{z \in \cC} \sum_{s=0}^{S-1} s^q \Theta^s(z)\right], 
        \label{} 
    \end{align} 
    where $Q_S = \sum_{s=0}^{S-1} s^q$. 

    Then, we sum \eqref{eq:big-theta-bound-inequality-with-errors-double-loop} over $k = 0, \ldots, K-1$ with multiplier $k^q$, 
    take maximum of both sides over $z \in \cC$, and then take total expectation to obtain 
    \begin{align}
        2 \tau K Q_S \EE[\gap(z^S)] &\leq 2 \tau K \EE\left[ \max_{z \in \cC} \sum_{s=0}^{S-1} s^q \Theta^s(z)\right] \nonumber \\ 
        &\leq \sum_{s=0}^{S-1} s^q \left( \hat{\Phi}^s - \hat{\Phi}^{s+1} \right) + 2 \tau \EE\left[ \max_{z \in \cC} \sum_{s=0}^{S-1} s^q \sum_{k=0}^{K-1} e(z, k) \right] \nonumber \\ 
        &\leq S^q \left( \alpha + (1-\alpha) K \right) \max_{z \in \cC}\En{z_0 - z}^2 + 2 \tau \EE\left[ \max_{z \in \cC} \sum_{s=0}^{S-1} s^q \sum_{k=0}^{K-1} e(z, s, k) \right], 
        \label{eq:sum-need-to-be-bounded}
    \end{align} 
    where in the first inequality we use Jensen's inequality and convexity of $g$, 
    in the second inequality we use \eqref{eq:big-theta-bound-inequality-with-errors-double-loop}, 
    in the third inequality we use 
    \begin{equation}
        \sum_{s=0}^{S-1} s^q \left( \hat{\Phi}^s - \hat{\Phi}^{s+1} \right) = \sum_{s=1}^S (s^q - (s-1)^q) \hat{\Phi}^s - S^q \hat{\Phi}^S \le S^q \hat{\Phi}^0(z). 
        \label{eq:upper-bound-weighted-sum-sublinear-double-loop}
    \end{equation}. 

    By \eqref{eq:bound-by-splited-two-norms} and the tower property, we have $\bbE \delta^s_k \leq 0$. 
    Then, we have 
    \begin{align}
        2 \tau K Q_S \EE[\gap(z^S)] \leq S^q \left( \alpha + (1-\alpha) K \right) \max_{z \in \cC}\En{z_0 - z}^2 + 2 \tau \EE\left[ \max_{z \in \cC} \sum_{s=0}^{S-1} s^q \sum_{k=0}^{K-1} e(z, s, k) \right]. 
        \label{}
    \end{align} 

    Next, we will bound the error bound. 
    For $s = 0, \ldots, S - 1$ and $k = 0, \ldots, K - 1 $, 
    we set $\mathcal{F}^s_k = \sigma(z^0_{1/2}, \ldots, z^0_{K-1/2}, \ldots, z^s_{1/2}, \ldots, z^s_{k+1/2})$, 
    $u^s_{k+1} = 2 \tau [\hat{F}(z^s_{z^s_{k+1/2}}) - F(z^s_{k+1/2})] = 2 \tau [F(w^s) - F_{\xi^s_k}(w^s) - F(z^s_{k+1/2}) + F_{\xi^s_k}(z^s_{k+1/2})]$, 
    then it follows that 
    \begin{align}
        2 \tau \EE\left[ \max_{z \in \cC} \sum_{s=0}^{S-1} s^q \sum_{k=0}^{K-1} e(z, k) \right] 
        =\;& 2 \tau \EE\left[ \max_{z \in \cC} \sum_{s=0}^{S-1} s^q \sum_{k=0}^{K-1} \inp{F(z^s_{k+1/2}) - \hat{F}(z^s_{z^s_{k+1/2}})}{z^s_{k+1/2} - z} \right] \nonumber \\ 
        =\;& \EE\left[ \max_{z \in \cC} \sum_{s=0}^{S-1} s^q \sum_{k=0}^{K-1} \inp{u^s_{k+1}}{z} \right] - \sum_{s=0}^{S-1} s^q \sum_{k=0}^{K-1} \EE \left[ \inp{u^s_{k+1}}{z^s_{k+1/2}} \right] \nonumber \\ 
        =\;& \EE\left[ \max_{z \in \cC} \sum_{s=0}^{S-1} s^q \sum_{k=0}^{K-1} \inp{u^s_{k+1}}{z} \right], 
    \end{align}
    where in the second equality we use $\EE \left[ \inp{u^s_{k+1}}{z^s_{k+1/2}} \right] = 0$, which follows from $\mathcal{F}^s_k$-measurability of $z^s_{k+1/2}$, $\EE^s_k[u^s_{k+1}] = 0$, and the tower property of conditional expectations. 

    Using the double-loop version of \cref{lemma:bound-exp-max-inp,lemma:poly-sum-bound}, we have 
    \begin{align}
        & 2 \tau \EE\left[ \max_{z \in \cC} \sum_{s=0}^{S-1} s^q \sum_{k=0}^{K-1} e(z, s, k) \right] \nonumber \\ 
        \leq & \frac{1}{2} S^q \max_{z, z' \in \cC} {\En{ z - z' }}^2 + \frac{1}{2} \sum_{s=0}^{S-1} s^q \sum_{k=0}^{K-1} 4 \tau^2 \EE\En{F_{\xi^s_k}(z^s_{k+1/2}) - F_{\xi^s_k}(w^s) - [F(z^s_{k+1/2}) - F(w^s)]}^2 \tag{\cref{lemma:bound-exp-max-inp}} \nonumber \\ 
        \leq & \frac{1}{2} S^q \max_{z, z' \in \cC} {\En{ z - z' }}^2 + \frac{1}{2} \sum_{s=0}^{S-1} s^q \sum_{k=0}^{K-1} 4 \tau^2 \EE\En{F_{\xi^s_k}(z^s_{k+1/2}) - F_{\xi^s_k}(w^s)}^2 \tag{$\EE\En{X - \EE X}^2 \le \EE\En{X}^2$} \nonumber \\ 
        \leq & \frac{1}{2} S^q \max_{z, z' \in \cC} {\En{ z - z' }}^2 + 2 \tau^2 L^2 \sum_{s=0}^{S-1} s^q \sum_{k=0}^{K-1} \EE\En{z^s_{k+1/2} - w^s}^2 \tag{$L$-Lipschitz continuity of $F_\xi$} \nonumber \\ 
        \leq & \frac{1}{2} S^q \max_{z, z' \in \cC} {\En{ z - z' }}^2 + \frac{2 \tau^2 L^2}{(1-\alpha)(1-\gamma)} S^q \left( \alpha + K(1 - \alpha) \right) \max_{z, z'} \En{z - z'}^2 \tag{\cref{lemma:poly-sum-bound}} \nonumber \\ 
        = & \frac{1}{2} S^q \max_{z, z' \in \cC} {\En{ z - z' }}^2 + \frac{2 \gamma^2}{1 - \gamma} S^q \left( \alpha + K(1 - \alpha) \right) \max_{z \in \cC} \En{z_0 - z}^2, 
        \label{eq:upper-bound-e-double-loop}
    \end{align}
    where in the last equality we use $\tau = \frac{\gamma\sqrt{1 - \alpha}}{L}$. 
    Combining \eqref{eq:upper-bound-e-double-loop} with \eqref{eq:sum-need-to-be-bounded}, we have 
    \begin{align}
        2 \tau K Q_S \EE[\gap(z^S)] &\leq \left( 1 + \frac{2\gamma^2}{1 - \gamma} \right) S^q \left( \alpha + K(1 - \alpha) \right) \max_{z \in \cC} \En{z_0 - z}^2 + \frac{1}{2} S^q \max_{z, z' \in \cC} {\En{ z - z' }}^2. 
    \end{align} 
    Let $z_0 \in \cC$. Then, 
    \begin{align}
        2 \tau K Q_S \EE[\gap(z^S)] &\leq \left( 2 + \frac{2\gamma^2}{1 - \gamma} \right) S^q \left( \alpha + K(1 - \alpha) \right) \max_{z, z' \in \cC} {\En{ z - z' }}^2. 
    \end{align} 

    Also, we have 
    \begin{align} 
        Q_S = \sum_{s=0}^{S-1} s^q \ge \int_0^{S-1} x^q dx = \frac{(S-1)^{q+1}}{q+1} &= \frac{(S-1)S^q}{(q+1)} \left(\frac{S-1}{S}\right)^q \nonumber \\
        &\ge \frac{(S-1)S^q}{(q+1)} \prod_{m = 1}^q \left(\frac{S-m}{S-m+1}\right) \nonumber \\ 
        &= \frac{(S-1)S^q}{(q+1)} \cdot \frac{S-q}{S} \ge \frac{(S-q-1)S^q}{(q+1)}. 
    \end{align}
    Hence, we obtain the $O(1/T)$ convergence rate. 

\end{proof}

\subsection*{Proof of \cref{crl:sublinear-convergence-iias-double-loop}}

\begin{proof} 
    To reach expected $\epsilon$-accuracy as measured by the gap function to the solution set, 
    we need 
    \begin{equation}
        S - q - 1 \geq \frac{\frac{(q+1)}{\tau} \big( 1 + \frac{\gamma^2}{1-\gamma} \big) \frac{ \alpha + K(1 - \alpha) }{K} \max_{z, z' \in \cC} \En{z - z'}^2}{\epsilon}, 
    \end{equation} 
    where $S$ counts the number of epoches (outer loops). 
    Recall that $q = 1, 2, 3, \ldots$, $1 - \alpha = \frac{1}{K} = \frac{2}{N}$, $\gamma = 0.99$ and $\tau = \frac{0.99\sqrt{2}}{\sqrt{N}L}$. 
    Since 
    \begin{align} 
        \frac{(q+1)}{\tau} \big( 1 + \frac{\gamma^2}{1-\gamma} \big) \big( \alpha + K(1 - \alpha) \big) = \frac{(q+1)\sqrt{N}L}{0.99 \sqrt{2}} \big( 1 + \frac{0.99^2}{0.01} \big) \frac{2(2 - \frac{2}{N}}{N} = \cO(\frac{1}{\sqrt{N}}L). 
    \end{align} 
    Since we need $2 K + N$ evaluations of $F_\xi$ per epoch, we need $2N \times S$ evaluations of $F_\xi$ to reach expected $\epsilon$-accuracy. 
    Therefore, we attain that the time complexity is $\cO\left( \frac{\sqrt{N}L}{\epsilon} \right)$. 
\end{proof} 



\section{EXPERIMENTAL DETAILS AND ADDITIONAL NUMERICAL RESULTS} 

\subsection{Game Descriptions and Implementation Details}
\label{sec:app-experiment-descriptions}

In all experiments, 
we use the duality gap $\max_{y \in \mathcal{Y}} f(x, y) - \min_{x \in \mathcal{X}} f(x, y)$ to measure performance. 
To measure the cost of computation, we count the (expected) number of times $F$ is (fully) evaluated. 
In particular, for deterministic EG we count $2N$ in each iteration; 
for double loop EG we count $N + 2K$ in each epoch (outer loop); 
for loopless EG we count $pN + 2$ in each iteration. 

For matrix games, we consider the following classes of games: 
\paragraph{Policeman and burglar game \citep{nemirovski2013mini}.} There are $n$ houses. A burglar chooses a house $i$ associated with wealth $w_i$ to burglarize. A policeman chooses her post near a house $j$, and then the probability she catches the burglar is $\exp(-\theta d(i, j))$. This can be represented as a matrix game \eqref{eq:matrix-game} with payoff matrix 
\begin{equation}
    A_{ij} = w_i (1 - e^{-\theta d(i, j)}), \quad i \in [n], j \in [n]. 
\end{equation}
We generate random instances according to $w_i = \En{ w_i' }, \; w_i' \sim \mathcal{N}(0, 1)$, $\theta=0.8$ and $d(i, j) = \En{ i - j }$. 

For the policeman and burglar game, we implemented the algorithms on an $m = n = 100$ instance generated with random seed $2023$.
We set the total number of gradient calculations to be $8 \times 10^4$ (to see fast convergence of SVRG-EG after many iterations).

\paragraph{Symmetric matrices in \citet{nemirovski2004prox}.} We test two families of symmetric matrices. 
They are given by  
$$A_{ij} = \left( \frac{i + j - 1}{2n - 1} \right)^\alpha$$ 
for $i \in [n], j \in [n]$ and 
$$A_{ij}^\prime = \left( \frac{ \En{ i - j } + 1}{2n - 1} \right)^\alpha$$ 
for $i \in [n], j \in [n]$, 
for some $n \in \mathbb{N}$. 
We generate instances by setting $\alpha = 1$ and $2$, respectively.
For each family, we generate an $m = n = 2000$ instance (since most algorithms converge very fast on smaller instances).
We set the total number of gradient calculations to be $4 \times 10^3$.

\paragraph{Uniformly random matrix games.} We also generate games where each $A_{ij}$ is chosen from $\{0,\ldots,10\}$ uniformly. 
We set $m = n = 1000$ and the total number of gradient calculations to be $2 \times 10^4$.

\paragraph{EFGs} In the Leduc game ($m = n = 337$) we set the total number of gradient calculations to be $1 \times 10^3$. 
In the Search (zero sum) game ($m = 11830, n = 69$) we set the total number of gradient calculations to be $2 \times 10^2$. 

\paragraph{Image segmentation.} In the image segmentation experiment, due to the size of the instances, we run the number of iterations corresponding to only $20$ full gradient calculations.
The size of the problem is $m = n = 22500$.

For the algorithms that we compare to, we next describe the (theoretically correct) baseline constant stepsizes that we will use (sometimes we will use a multiple of the baseline stepsize).
For PDA we use $0.99 / \En{A}_2$ as shown in~\citet{chambolle2011first}; 
for EG we use $0.99 / \En{A}_2$ as shown in~\citet{tseng1995linear}; 
for OOMD with $l_2$ norm, we use $0.5 / \En{A}_2$ as shown in various recent literature.
For OOMD with entropy, the theoretically correct stepsizes for last-iterate convergence are proved under the assumption of a unique Nash equilibrium (which is not satisfied by our instances).
For OOMD with entropy, we always use larger stepsizes than theoretically correct, as it too conservative and leads to very slow convergence.
In particular, we use $1$ in matrix games and $0.125$ in EFGs.

All experiments are implemented on a personal computer and with Python 3.9.12.

\subsection{Additional Experimental Results}
\label{sec:app-additional-results}

\graphicspath{ {./plots/} }

\subsection*{Numerical results on randomly generated matrix} 

\begin{figure}[H]
    \begin{center}
    \includegraphics[width=0.28\textwidth]{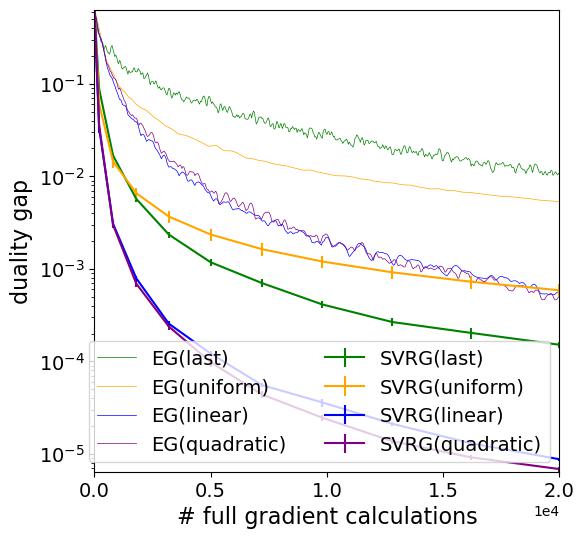}
    \includegraphics[width=0.28\textwidth]{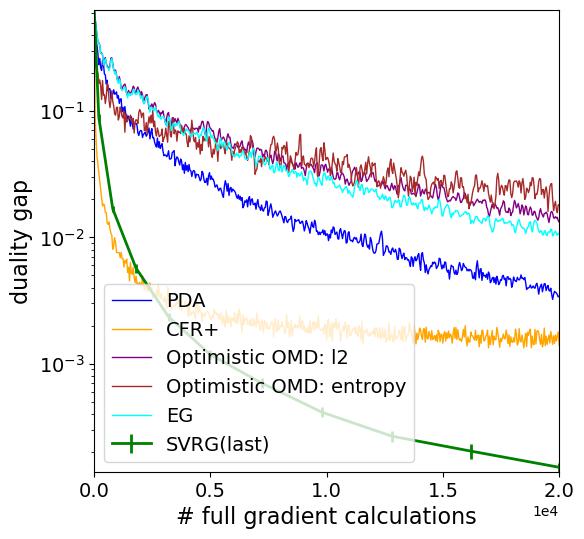}
    \includegraphics[width=0.28\textwidth]{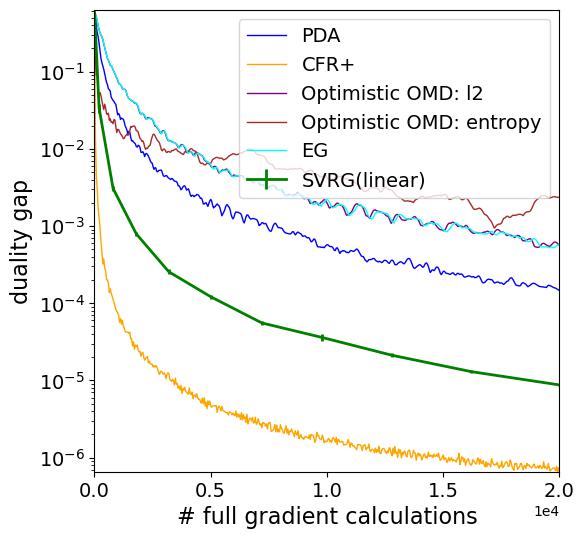}
    \caption{Numerical results on randomly generated matrix. The setup is the same as in \cref{fig:pb}.}
    \label{fig:uni}
    \end{center}
    \vskip -0.2in
\end{figure}

\subsection*{Numerical results on Nemirovski's two symmetric matrices}

We compare (loopless) SVRG-EG with EG (on the left) and other algorithms (the middle shows last-iterate and the right shows linear iterate averaging), 
based on Nemirovski's two symmetric matrix classes. 
The results are shown in~\cref{fig:nem1} and~\cref{fig:nem2}.
To show fast convergence of EG-type methods, we show results of experiments in which all EG-type methods are implemented using larger stepsizes.
In particular, in the first matrix game we use $20 \times$ stepsizes, and in the second matrix game we use $10 \times$ stepsizes.
For both games, all EG-type methods still converge under the larger stepsizes. 
Note that even CFR+ converges fast in last iterates on some of these cases, 
they are not guaranteed to converge (under this stepsize). 

\begin{figure}[H]
    \begin{center}
    \includegraphics[width=0.28\textwidth]{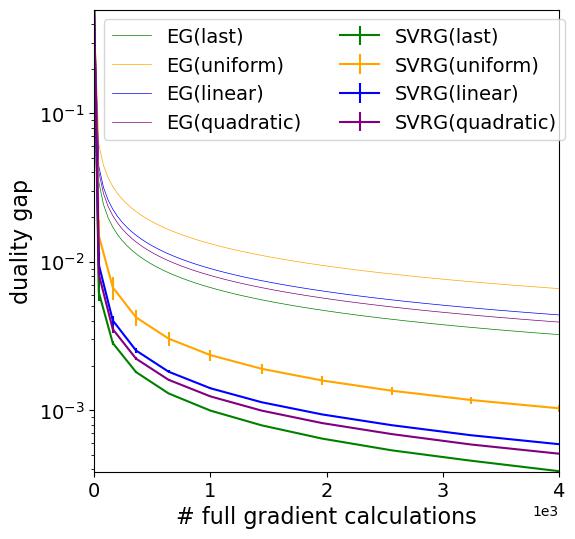}
    \includegraphics[width=0.28\textwidth]{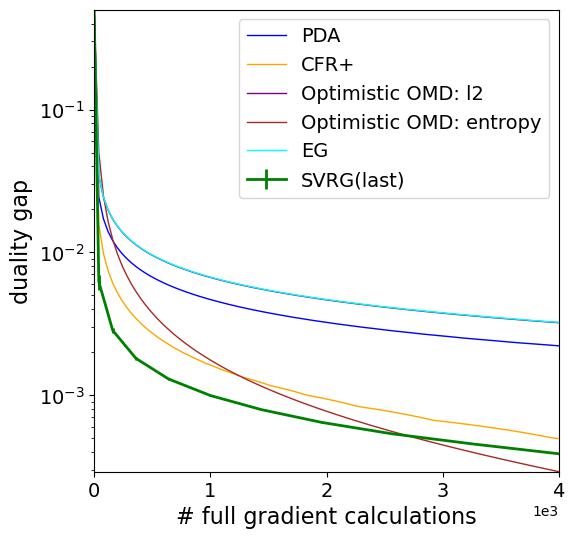}
    \includegraphics[width=0.28\textwidth]{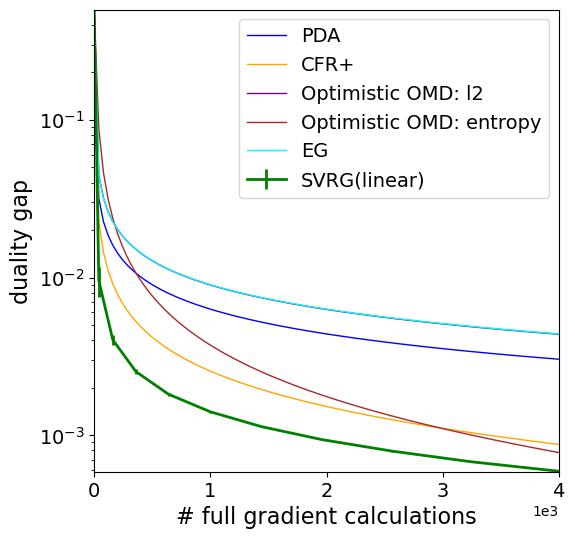}
    \caption{Numerical results on the first symmetric matrix in \citet{nemirovski2004prox}.
    Note that Optimistic OMD (l2 norm) and EG overlap.
    The setup is the same as in \cref{fig:pb}.
    }
    \label{fig:nem1}
    \end{center}
    \vskip -0.2in
\end{figure}

\begin{figure}[H]
    \begin{center}
    \includegraphics[width=0.28\textwidth]{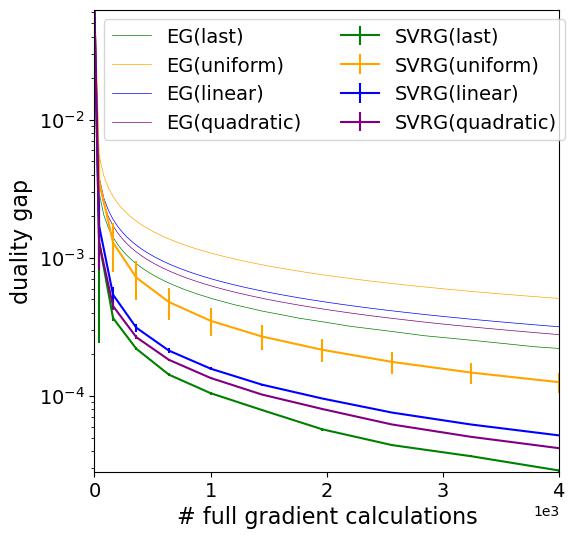}
    \includegraphics[width=0.28\textwidth]{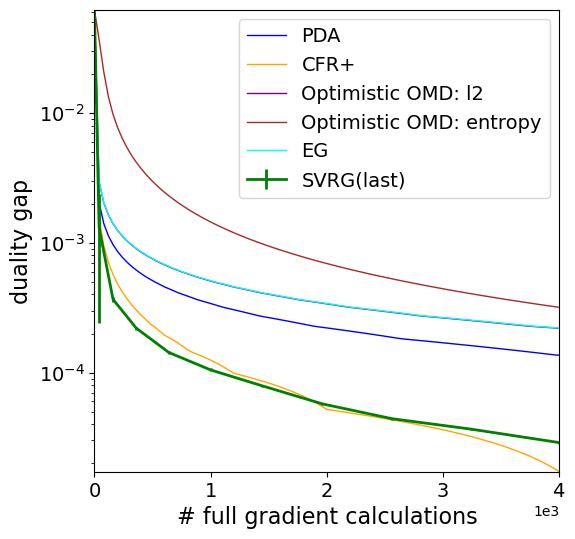}
    \includegraphics[width=0.28\textwidth]{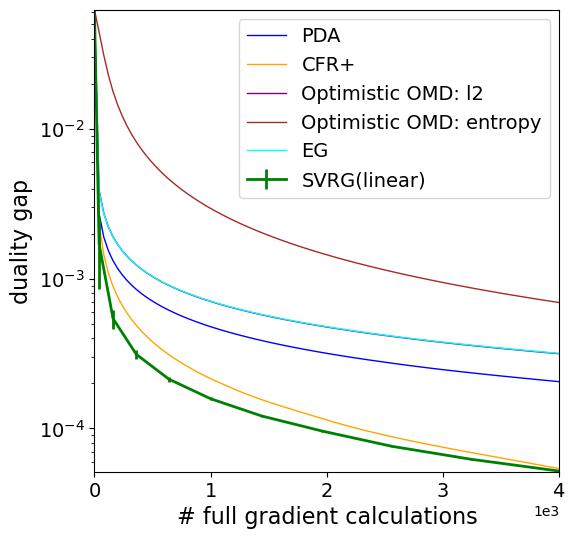}
    \caption{Numerical results on the second symmetric matrix in \citet{nemirovski2004prox}.
    Note that Optimistic OMD (l2 norm) and EG overlap.
    The setup is the same as in \cref{fig:pb}.
    }
    \label{fig:nem2}
    \end{center}
    \vskip -0.2in
\end{figure}

\subsection*{Numerical results on Leduc game}

\begin{figure}[H]
    \begin{center}
        \includegraphics[width=0.28\textwidth]{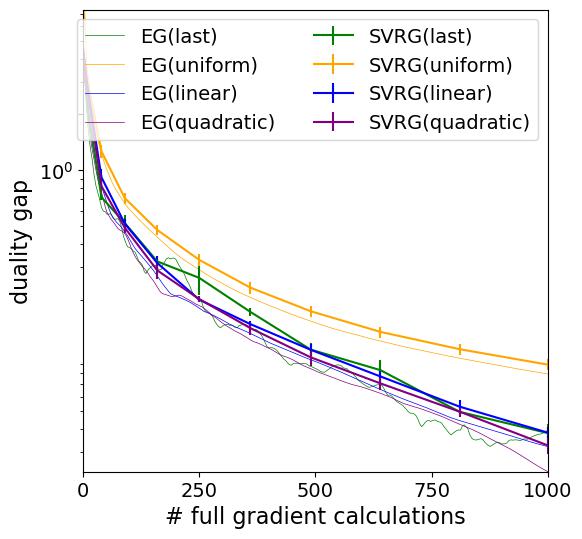}
        \includegraphics[width=0.28\textwidth]{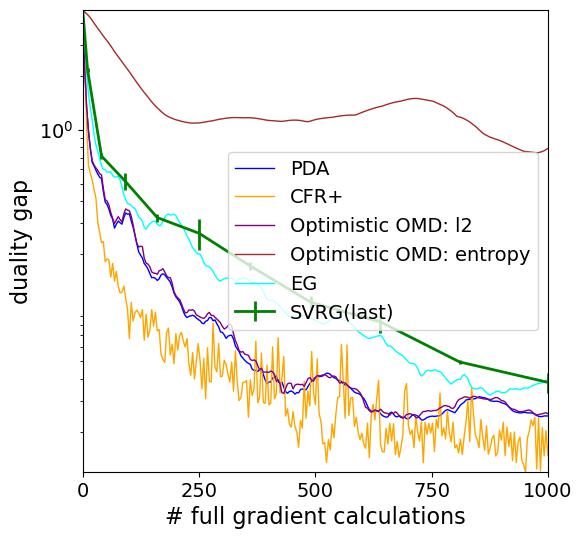}
        \includegraphics[width=0.28\textwidth]{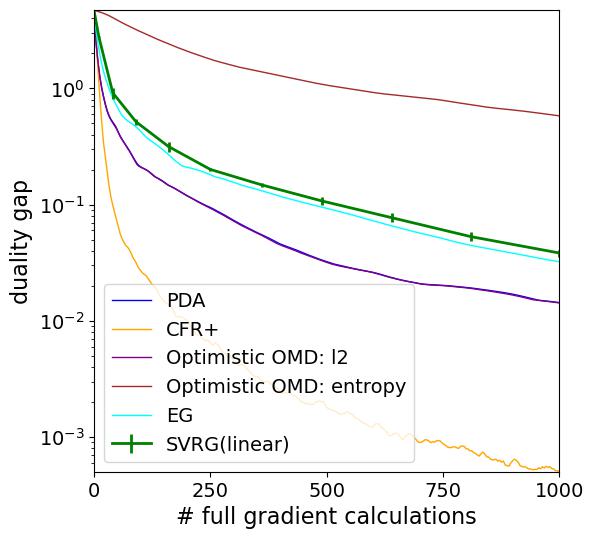}
        \caption{Numerical results on Leduc game. 
        Random algorithms were implemented with seeds 0, 1, 2. 
        The setup is the same as in \cref{fig:pb}.} 
        \label{fig:leduc}
    \end{center}
    \vskip -0.2in
\end{figure}

\subsection*{More results on image segmentation}

To construct the image segmentation SPP, we use the \emph{segmentation.slic} module in Scikit-image (0.19.2) package to initialize the centroids of regions using the $k$ mean method. 
This module segments images using \emph{k-means clustering} in Color-(x,y,z) space.
We show the initial image segmentation result compared to our optimized result, and additional numerical results in~\cref{fig:image-additional}.

\begin{figure}[H]
    \begin{center}
        \begin{minipage}[]{0.63\textwidth}
            \begin{center}
                \includegraphics[width=0.4\textwidth]{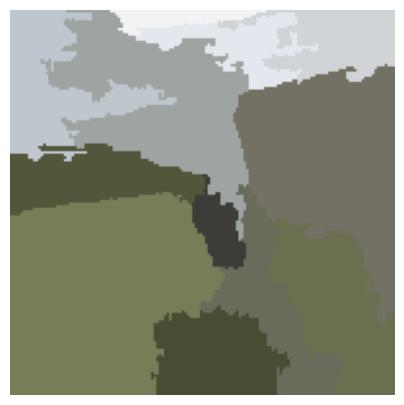}
                \includegraphics[width=0.4\textwidth]{cat-optimized}  
            \end{center}          
        \end{minipage}
        \hspace{-20pt}
        \begin{minipage}[]{0.36\textwidth}
            \begin{center}
                \includegraphics[width=0.78\textwidth]{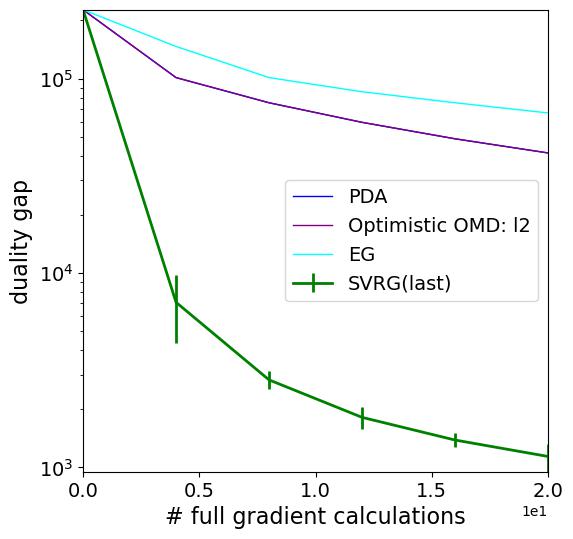}
            \end{center}          
        \end{minipage}
    \caption{Left is the segmentation result from the initial $k$-means method. Middle is the segmentation result from solving~\cref{eq:image-segmentation-spp} via SVRG-EG. 
    Random algorithms are implemented with seeds 0-5. Other setup is the same as in \cref{fig:pb}.
    Right is numerical performance on all applicable algorithms, with last iterate.
    Note that PDA and Optimistic OMD overlap.
    }
    \label{fig:image-additional}
    \end{center}
    \vskip -0.2in
\end{figure}


\subsection*{Numerical results of double-loop SVRG-EG}

We implement experiments using double-loop SVRG-EG as well, and show results as follows.
All setups are the same as in \cref{fig:pb}, except for using double-loop version of SVRG-EG.

\begin{figure}[H]
    \begin{center}
    \includegraphics[width=0.28\textwidth]{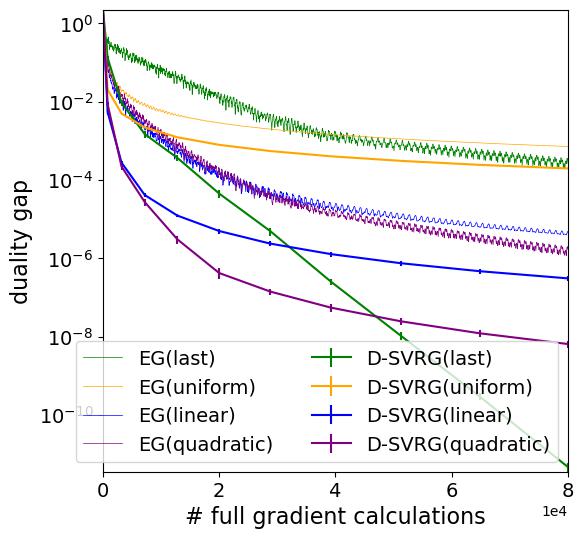}
    \includegraphics[width=0.28\textwidth]{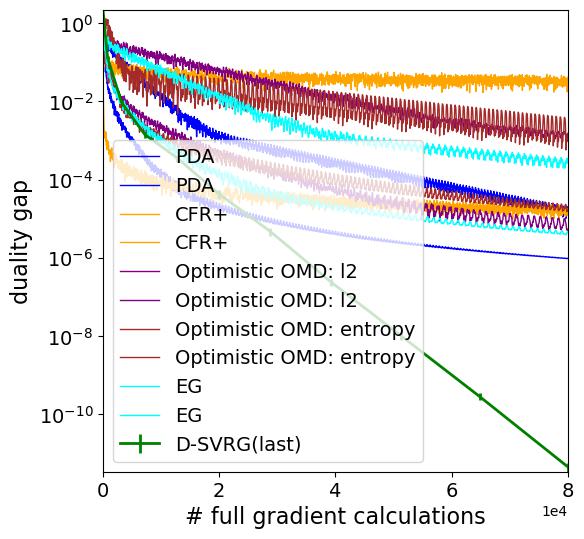}
    \includegraphics[width=0.28\textwidth]{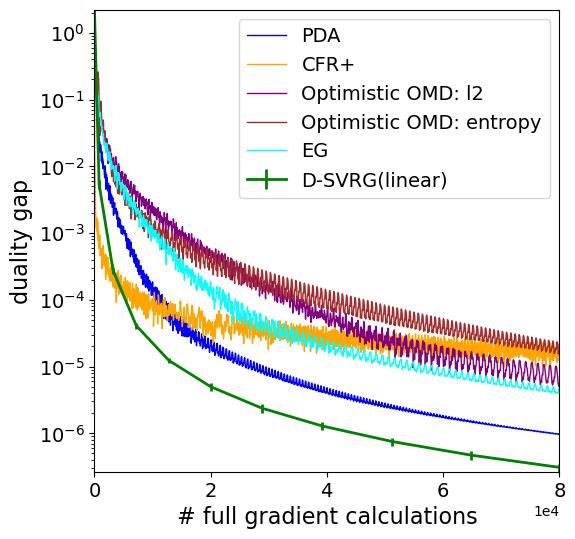}
    \caption{Numerical results on policeman and burglar game (with double-loop SVRG-EG). 
    }
    \label{fig:pb-d}
    \end{center}
\end{figure}

\begin{figure}[H]
    \begin{center}
    \includegraphics[width=0.28\textwidth]{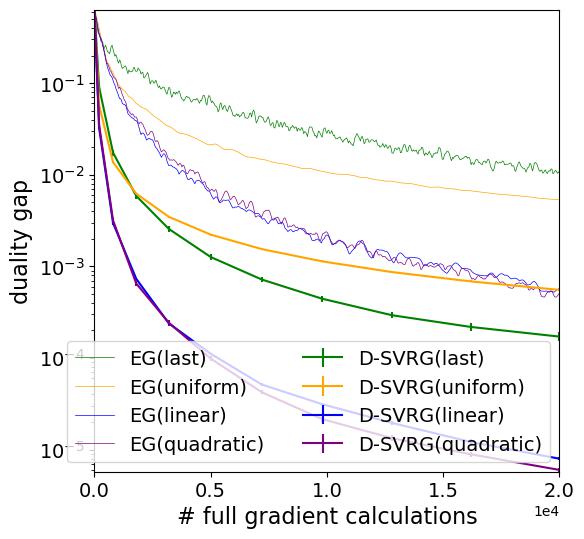}
    \includegraphics[width=0.28\textwidth]{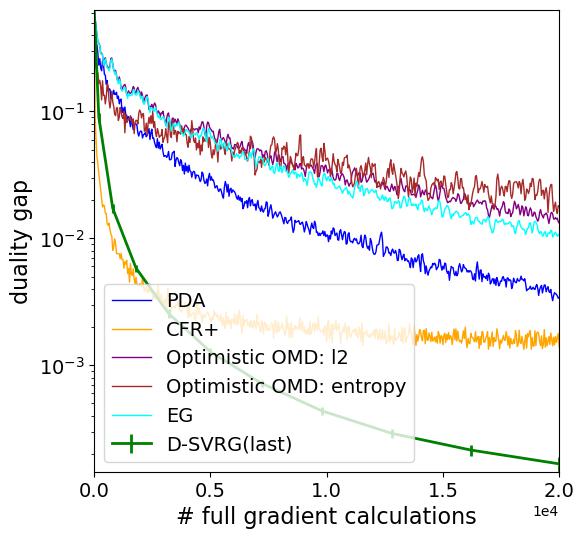}
    \includegraphics[width=0.28\textwidth]{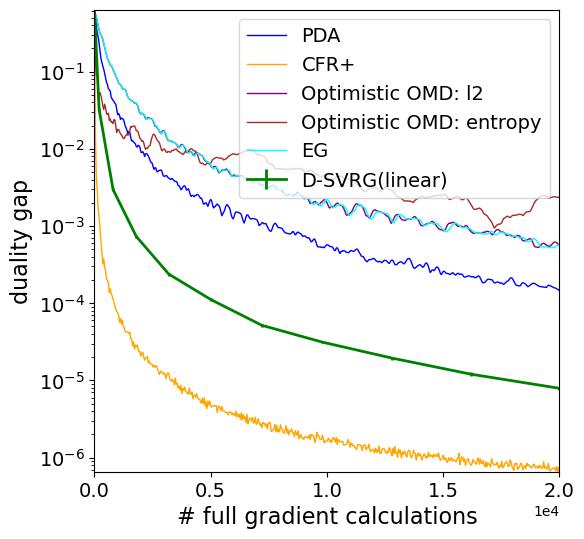}
    \caption{Numerical results on randomly generated matrix (with double-loop SVRG-EG).}
    \label{fig:uni-d}
    \end{center}
    \vspace{-4pt}
\end{figure}

\begin{figure}[H]
    \begin{center}
    \includegraphics[width=0.28\textwidth]{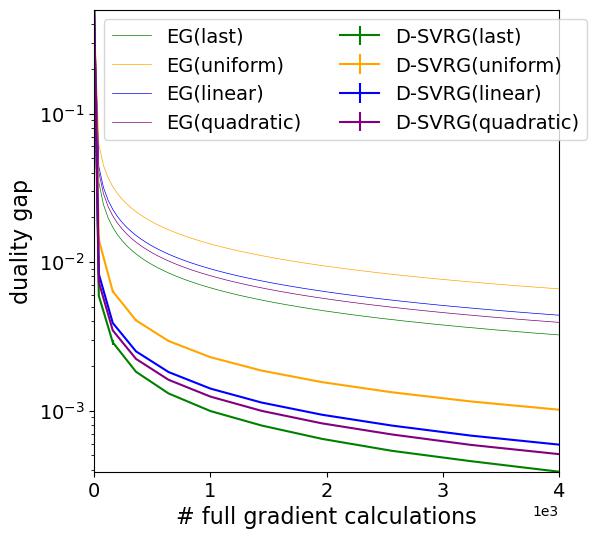}
    \includegraphics[width=0.28\textwidth]{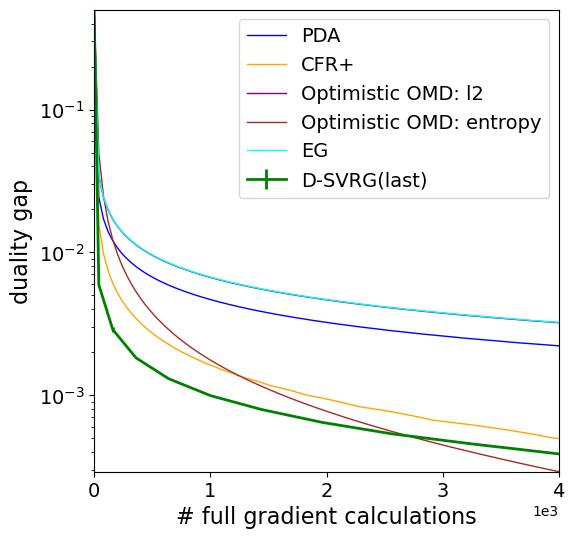}
    \includegraphics[width=0.28\textwidth]{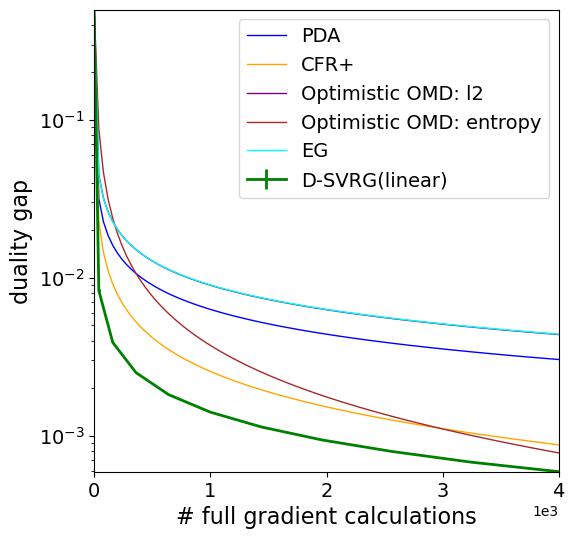}
    \caption{Numerical results on the first symmetric matrix in \citet{nemirovski2004prox} (with double-loop SVRG-EG)
    }
    \label{fig:nem1-d}
    \end{center}
    \vspace{-4pt}
\end{figure}

\begin{figure}[H]
    \begin{center}
    \includegraphics[width=0.28\textwidth]{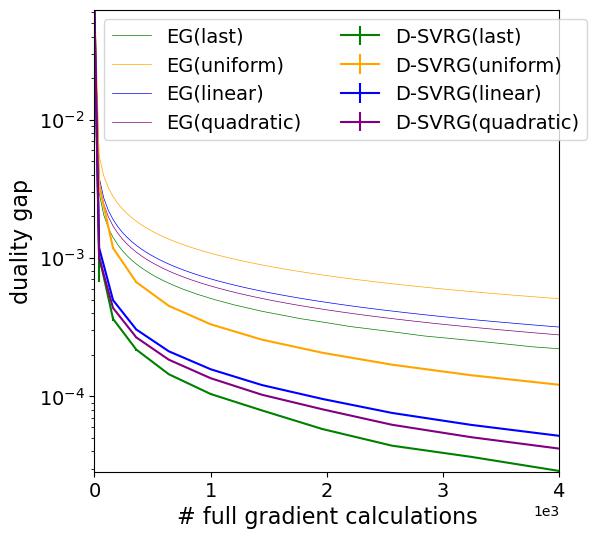}
    \includegraphics[width=0.28\textwidth]{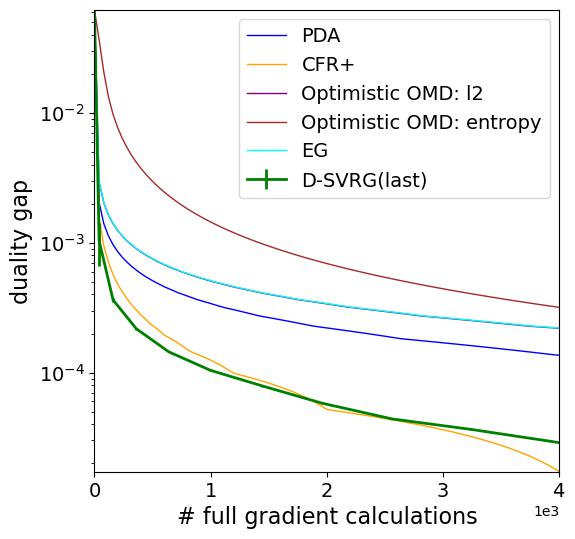}
    \includegraphics[width=0.28\textwidth]{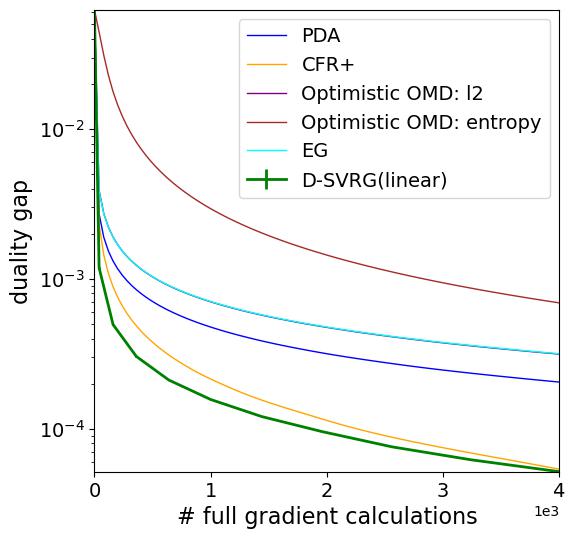}
    \caption{Numerical results on the second symmetric matrix in \citet{nemirovski2004prox} (with double-loop SVRG-EG)
    }
    \label{fig:nem2-d}
    \end{center}
    \vspace{-4pt}
\end{figure}

\begin{figure}[H]
    \begin{center}
        \includegraphics[width=0.28\textwidth]{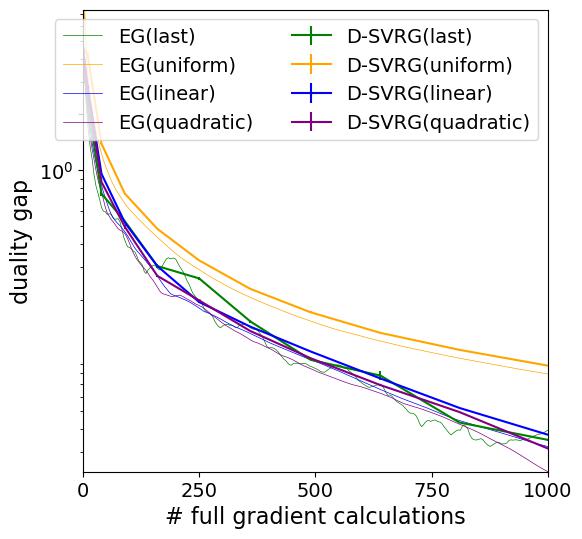}
        \includegraphics[width=0.28\textwidth]{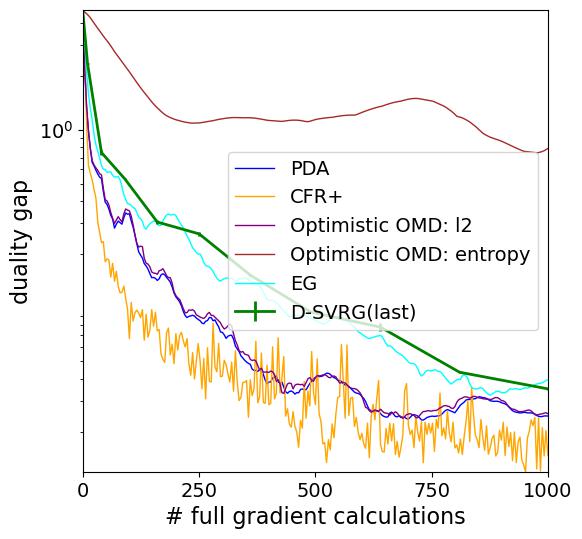}
        \includegraphics[width=0.28\textwidth]{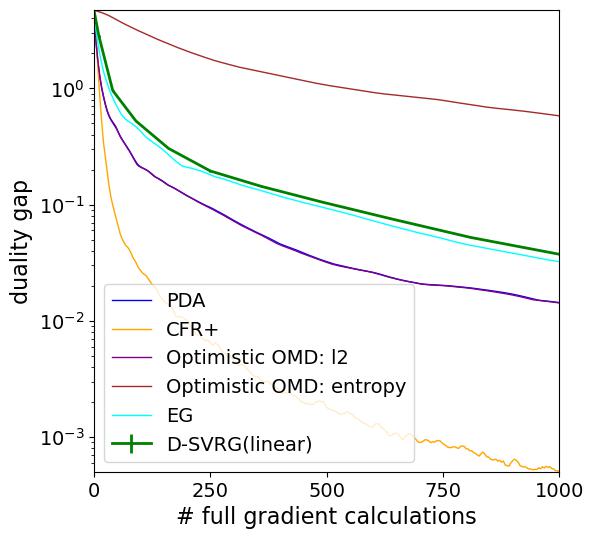}
        \caption{Numerical results on Leduc game (with double-loop SVRG-EG). } 
        \vspace{-2pt}
        \label{fig:leduc-d}
    \end{center}
    \vspace{-4pt}
\end{figure}

\begin{figure}[H]
    \begin{center}
        \includegraphics[width=0.28\textwidth]{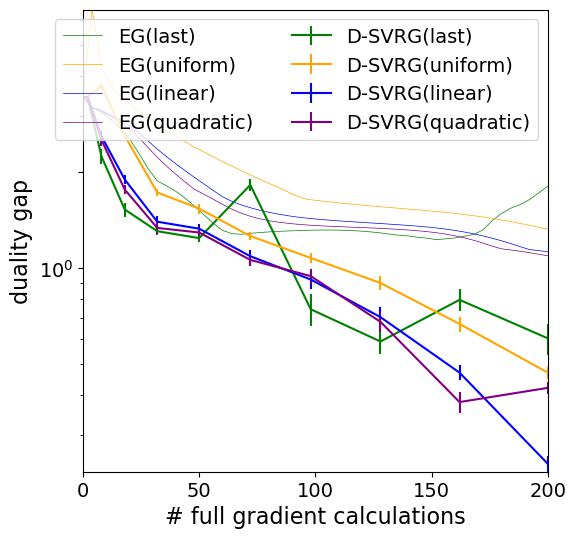}
        \includegraphics[width=0.28\textwidth]{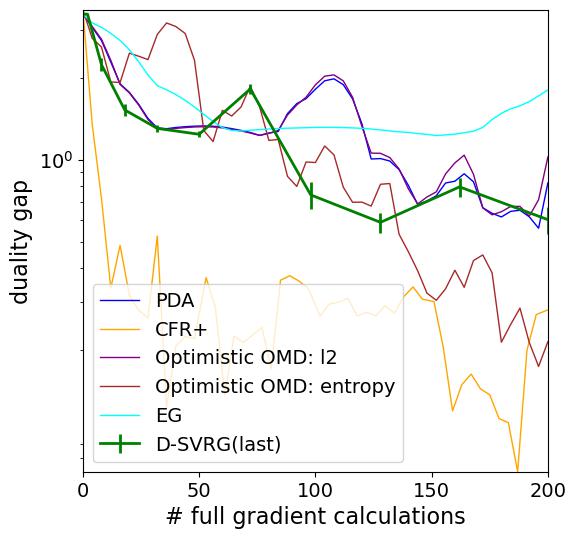}
        \includegraphics[width=0.28\textwidth]{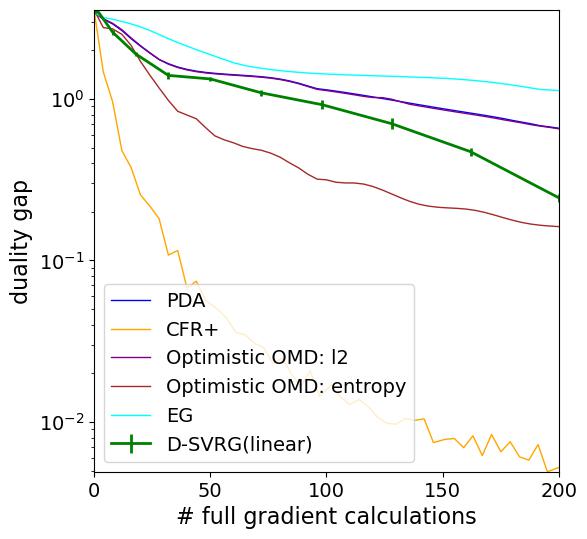}
        \caption{Numerical results on Search (zero sum) game (with double-loop SVRG-EG). } 
        \vspace{-2pt}
        \label{fig:search5-d}
    \end{center}
\end{figure}

\begin{figure}[H]
    \begin{center}
        \includegraphics[width=0.28\textwidth]{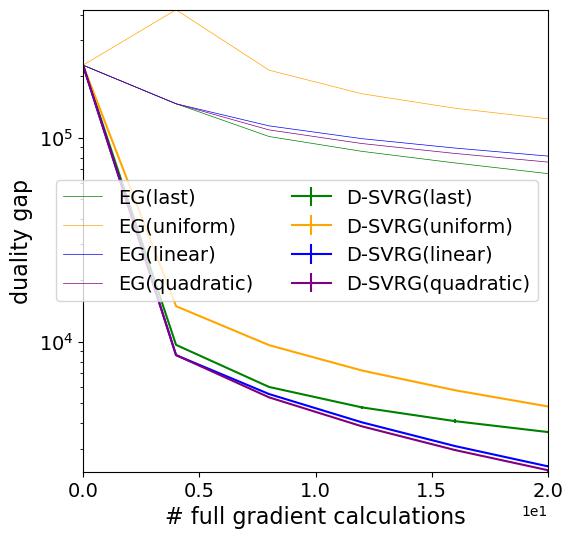}
        \includegraphics[width=0.28\textwidth]{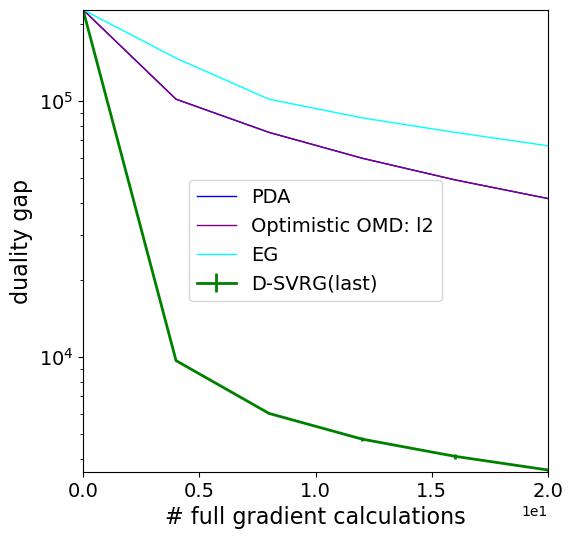}
        \includegraphics[width=0.28\textwidth]{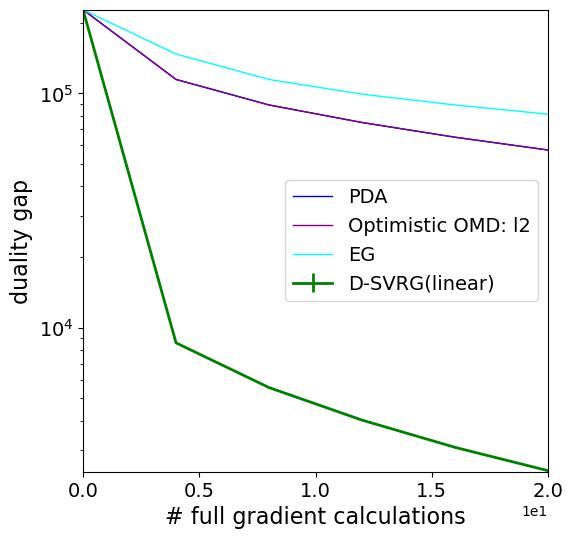}
    \caption{Numerical results on image segmentation. Left is numerical performance of double-loop SVRG-EG compared to EG.
    Middle and right are numerical performance on all applicable algorithms, with last iterate and linear averaging, respectively.
    Note that PDA and Optimistic OMD overlap.
    }
    \label{fig:image-d}
    \end{center}
    \vskip -0.2in
\end{figure}


\end{document}